\documentclass[a4paper]{article}
\usepackage{amsmath, amsthm, amsfonts, amssymb, mathrsfs}
\usepackage[english]{babel}
\usepackage[all]{xy}
\usepackage[pagebackref]{hyperref}
\usepackage{cite}

\theoremstyle{definition}
\newtheorem{theorem}{Theorem}[section]
\newtheorem{proposition}[theorem]{Proposition}
\newtheorem{corollary}[theorem]{Corollary}
\newtheorem{lemma}[theorem]{Lemma}
\newtheorem{remark}[theorem]{Remark}
\newtheorem{definition}[theorem]{Definition}
\newtheorem{example}[theorem]{Example}
\newtheorem{question}[theorem]{Question}
\newtheorem*{acknowledgements}{Acknowledgements}

\newcommand{\CC}{\mathbb{C}}
\newcommand{\PP}{\mathbb{P}}
\newcommand{\UUb}{\mathbb{U}}
\newcommand{\NN}{\mathbb{N}}
\newcommand{\OO}{\mathcal{O}}
\newcommand{\UU}{\mathcal{U}}
\newcommand{\FF}{\mathcal{F}}
\newcommand{\LL}{\mathcal{L}}

\newcommand{\ZZ}{\mathcal{Z}}
\newcommand{\BB}{\mathcal{B}}
\newcommand{\KK}{\mathcal{K}}

\newcommand{\II}{\mathscr{I}}
\newcommand{\IIc}{\mathscr{C}}
\newcommand{\pp}{\mathfrak{p}}

\newcommand{\sing}{\mathrm{sing}}
\newcommand{\codim}{\mathrm{codim}}
\newcommand{\ann}{\mathrm{ann}}
\newcommand{\proj}{\text{Proj}}

\title{The Kupka Scheme and Unfoldings}
\author{C\'esar Massri\footnotemark[1], Ariel Molinuevo\footnotemark[1] ,  Federico Quallbrunn.\thanks{The author was fully supported by CONICET, Argentina.}}
\date{}

\begin{document}

\maketitle

\centerline{\emph{Department of Mathematics, FCEyN, University of Buenos Aires, Argentina.}}

\bigskip

\begin{abstract}
Let $\omega$ be a differential 1-form defining an algebraic foliation of codimension 1 in projective space. In this article we use commutative algebra to study the singular locus of $\omega$ through its ideal of definition. Then, we expose the relation between the ideal defining the Kupka components of the singular set of $\omega$ and the first order unfoldings of $\omega$.
Exploiting this relation, we show that the set of Kupka points of $\omega$ is generically not empty.

As an application of these results, we can compute the ideal of first order unfoldings for some known components of the space of foliations.

\end{abstract}

\section{Introduction}

An \emph{algebraic foliation of codimension one} in projective space $\PP^n$ over $\CC$, is given by a global section $\omega$ of
the sheaf of twisted differential 1-forms $\Omega^1_{\PP^n}(e)$ that verify the \emph{Frobenius integrability condition} $\omega\wedge d\omega = 0$.
The space of such foliations forms a projective variety $\FF^1(\PP^n,e)$. A first invariant that one can attach to a codimension one foliation is its degree,
which is given by the number of tangencies of a generic line with the foliation.
For $\omega\in\FF^1(\PP^n,e)$ the degree is known to be $e-2$.

\bigskip

The \emph{singular locus} of a foliation given by $\omega$ is defined by $\sing(\omega)_{set}=\{p\in\PP^n :\ \omega(p)=0 \}$. It can be decomposed as a union
\[
\sing(\omega)_{set}= \KK_{set}\cup\LL_{set}
\]
where $\KK_{set}$ is the closure of the set of \emph{Kupka points} which are the singular points of $\omega$ such that $d\omega\neq 0$; and $\LL_{set}$ is defined as the closure of $\sing(\omega)_{set}\setminus\KK_{set}$. We append a subindex \emph{set} to stress the fact that this is a set-theoretical approach. Kupka points were first studied by Ivan Kupka in \cite{kupka}, where he first noted that the existence of such points is stable under deformations of $\omega$, see also \cite[Chapter 1.4, p.~38]{aln}. Sometimes the subvariety $\KK_{set}$ is referred to as the \emph{Kupka component}. Locally around each Kupka point it is a smooth variety of codimension 2. Also $\omega$ has locally a normal form around Kupka points. On the other side, if $\LL_{set}$ has codimension greater than 3, by B. Malgrange's theorem in \cite[Th\'eor\`eme 0.1, p.~163]{m-f1}, $\omega$ admits locally around each point of $\LL_{set}$, an analytic integrating factor.

\medskip
Since a series of articles which appeared around 1994 such as \cite{omegar-marcio,ballico,cerveau-lins-neto-kupka}, a lot of attention had been paid to foliations in $\PP^n$ with a non-empty Kupka variety. Many results on such foliations focus around special cases where $\KK_{set}$ is non-singular and \emph{every} point in $\KK_{set}$ is a Kupka point. In this setting there are results such as the main theorem of \cite{cerveau-lins-neto-kupka}, stating that if $\KK_{set}$ is \emph{globally} a smooth complete intersection then $\omega$ has a meromorphic first integral. The work of D. Cerveau and A. Lins Neto motivated the question of when $\KK_{set}$ is a non-singular global complete intersection subvariety of $\PP^n$. In this regard, the first results are the very important principal theorems of \cite{omegar-marcio} and its addendum \cite{ballico}.
\par Other results on properties of a foliation with a non-singular Kupka variety are the main subject of papers like \cite{omegar-kupka,scardua-camacho}, and more recently \cite{omegar-ivan}.

\bigskip

In this work we address a more fundamental question on Kupka singularities, namely: Which forms $\omega$ on $\PP^n$ admit a \emph{non-empty} Kupka variety? In every known irreducible component of the space of integrable forms, a generic element has indeed Kupka points. Whether this is a general situation or a coincidence remains unknown.

We find a partial answer to this question which takes us to consider the \emph{schematic} structure of $\sing(\omega)$ given by the homogeneous ideal generated by the coefficients of $\omega$. See Theorem \ref{teo3} for a full statement of the following result.

\begin{theorem}
If the ideal of $\sing(\omega)$ is radical, then its Kupka variety is non-empty.
\end{theorem}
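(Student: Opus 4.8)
The plan is to argue by contraposition: assuming the Kupka variety is empty, I will deduce that $\omega=0$, contradicting the hypothesis that $\omega$ defines a foliation. Lift $\omega$ to a $1$-form $\widetilde\omega=\sum_{i=0}^{n}A_i\,dx_i$ on $\CC^{n+1}$ with the $A_i$ homogeneous of degree $e-1$ and $\sum_i x_iA_i=0$, so that the singular ideal is $I=(A_0,\dots,A_n)$ and $\sing(\omega)_{set}=V_+(I)$. Write $d\widetilde\omega=\sum_{i<j}B_{ij}\,dx_i\wedge dx_j$ with $B_{ij}=\partial_iA_j-\partial_jA_i$ homogeneous of degree $e-2$. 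A direct computation using Euler's formula on the $A_i$ and $i_R\widetilde\omega=0$ (where $R=\sum_i x_i\partial_{x_i}$ is the radial vector field) gives the identity $i_R\,d\widetilde\omega=e\,\widetilde\omega$; note $e\geq 2$ since the degree $e-2$ of the foliation is non-negative.

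\textbf{Step 1: translate ``no Kupka points''.} A point $p\in\PP^n$ is a Kupka point precisely when $p\in V_+(I)$ and $d\widetilde\omega$ is nonzero at a representative of $p$, i.e.\ some $B_{ij}$ does not vanish there. This is intrinsic: on the cone over $V_+(I)$ one has $i_R\,d\widetilde\omega=e\,\widetilde\omega=0$, so $d\widetilde\omega$ is horizontal along the singular set and descends. Hence the Kupka variety is empty if and only if every $B_{ij}$ vanishes on $V_+(I)$. \textbf{Step 2: apply the Nullstellensatz.} Since $\omega$ is integrable and $n\geq 2$, the singular set $V_+(I)$ is non-empty — this is classical (for instance by Bott's vanishing theorem applied to the normal line bundle $\OO(e)$ of the putative nonsingular foliation, since $(eH)^2\neq 0$ in $H^4(\PP^n)$; or from the classification of degree $0$ foliations). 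Therefore the homogeneous ideal of $V_+(I)$ is $\sqrt I$, so by Step 1 each $B_{ij}\in\sqrt I$, and since $I$ is assumed radical, $B_{ij}\in I$ for all $i<j$.

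\textbf{Step 3: degree obstruction and conclusion.} The ideal $I$ is generated by forms of degree $e-1\geq 1$, hence contains no nonzero homogeneous element of degree $<e-1$; in particular its degree-$(e-2)$ piece is zero. As $B_{ij}$ is homogeneous of degree $e-2$ and lies in $I$, we get $B_{ij}=0$ for all $i<j$, that is $d\widetilde\omega=0$. Then $e\,\widetilde\omega=i_R\,d\widetilde\omega=0$ with $e\neq 0$, so $\widetilde\omega=0$ and $\omega=0$, contradicting that $\omega$ defines a foliation. Therefore the Kupka variety is non-empty. I expect the only genuinely delicate point to be Step~1 — verifying that the defining non-vanishing condition for a Kupka point of the foliation on $\PP^n$ is faithfully detected by non-vanishing of the coefficients $B_{ij}$ of the affine lift — which rests on the identity $i_R\,d\widetilde\omega=e\,\widetilde\omega$ together with $\widetilde\omega=0$ along $\sing(\omega)$; everything after that is a short degree count once one knows $\sing(\omega)\neq\varnothing$.
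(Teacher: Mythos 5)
Your proof is correct, and it takes a genuinely different --- and far more elementary --- route than the paper's. The paper's proof of Theorem~\ref{teo3} goes through its unfoldings machinery: the formal division lemma (Theorem~\ref{division}, an induction on dimension using $D(\omega)^\vee$ and $\mathcal{E}xt^1(N(\omega),\OO_X)$) shows that a reduced singular locus forces $\omega\in\UU$, whence $\sqrt{I(\omega)}=\sqrt{K}$ by Theorem~\ref{teo1}; since $1\notin I(\omega)$ (Remark~\ref{1notinI}), $K$ is proper, and the quotient-ideal computation $(K:x)=(J:y_0x)$ shows every associated prime of $K$ is an associated prime of $J=\sqrt{J}$, so the irrelevant ideal cannot be associated to $K$ and $\KK\neq\emptyset$. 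You bypass all of this: emptiness of the Kupka set, the projective Nullstellensatz and radicality put the coefficients $B_{ij}$ of $d\omega$ --- homogeneous of degree $e-2$ --- into the singular ideal, which is generated in degree $e-1$; hence $d\omega=0$, and then $\omega=\tfrac{1}{e}\,i_R\,d\omega=0$, a contradiction. Your Step 1 is indeed sound: the Kupka condition is exactly the non-vanishing of some $B_{ij}$ at a singular point, and this is well defined on $\PP^n$ along $\sing(\omega)$ precisely because $i_R\,d\widetilde\omega=e\widetilde\omega$ vanishes there. Note that the degree comparison $e-2$ versus $e-1$ is the very same one that powers the paper's Remark~\ref{1notinI}; you apply it to the singular ideal directly rather than routing it through $I(\omega)$. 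What your argument buys is brevity and extra generality (integrability of $\omega$ is never used, so the statement holds for any nonzero section of $\Omega^1_{\PP^n}(e)$ with radical, nonempty singular ideal); what it does not give is the scheme-theoretic half of Theorem~\ref{teo3}, namely $\KK=\KK_{set}$ (which the paper obtains separately from Lemma~\ref{K=Kset}), nor any information about the unfoldings ideal, which is the paper's real objective. Two minor points: your $I$ is the paper's $J$ (the paper reserves $I$ for the unfoldings ideal), and the non-emptiness of $\sing(\omega)$ that you justify via Bott vanishing also follows from Theorem~\ref{koszul-teo} together with the same degree count, since $\sing(\omega)=\emptyset$ would give $H^2(\omega)=0$ and hence $d\omega=\omega\wedge\eta$, impossible in degree $e-2$.
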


\smallskip In order to prove this statement, we need a result of independent interest, namely Theorem \ref{division}, which is a local division property for $d\omega$.

\bigskip
Moreover, we also relate the algebraic structure of the ideal defining the Kupka variety of $\omega$ with its \emph{first order unfoldings}.

\medskip

A first order unfolding of $\omega$ is given by an integrable differential 1-form $\widetilde{\omega}_\varepsilon$, defined in the scheme $\PP^n[\varepsilon]:=\PP^n\times spec(k[\varepsilon])$, where $k[\varepsilon]=k[x]/(x^2)$, such that $\widetilde{\omega}_\varepsilon$ reduces to $\omega$ when intersected with the central fiber $\PP^n$.
The set $U(\omega)$ of first order unfoldings of $\omega$ has a natural vector space structure.
After \cite[Definition 4.10, p.~193]{suwa-unfoldings}, or \cite[Def. 2.2.5, p.~7]{moli} for a more algebraic approach, we say that two unfoldings $\widetilde{\omega}_\varepsilon$ and $\widetilde{\omega}_\varepsilon'$ are \emph{isomorphic} if there is an isomorphism $\phi$ of $\PP^n[\varepsilon]$ such that $\phi$ restricts to the identity in the central fiber and $\phi^*\widetilde{\omega}_\varepsilon= \widetilde{\omega}_\varepsilon'$.

First order unfoldings of a form are closely related to its \emph{first order deformations}.
A first order deformation of $\omega$ is given by a family of differential 1-forms $\omega_\varepsilon$, parameterized by an infinitesimal parameter $\varepsilon$, such that $\omega_\varepsilon$ is integrable and reduces to $\omega$ when $\varepsilon=0$. These are the `classic' perturbations and they identify with the Zariski tangent space $T_\omega\FF^1(\PP^n,e)$.
They relate to unfoldings through the exact sequence
\[
\xymatrix@R=0pt{
0\ar[r] & IF(\omega) \ar[r] & U(\omega) \ar[r] & D(\omega),
}
\]	
where $IF(\omega)$ denotes the \emph{integrating factors} of $\omega$,
\[
IF(\omega)=\{ f\in H^0\left(\OO_{\PP^n}(e)\right)\,\colon\, fd\omega=-\omega\wedge df\}.
\]

\medskip

The theory of unfoldings for differential forms was developed by Tatsuo Suwa in \cite{suwa-unfoldings}. Let us denote by $\OO_{\CC^{n+1},p}$ and $\Omega^1_{\CC^{n+1},p}$ the analytic germs of functions and differential 1-forms around $p\in\CC^{n+1}$, respectively. If $\varpi\in \Omega^1_{\CC^{n+1},p}$ defines a foliation, the space of unfoldings of $\varpi$ can be parameterized as
\begin{equation*}
U_p(\varpi) = \left\{(h,\eta)\in \OO_{\CC^{n+1},p}\times\Omega^1_{\CC^{n+1},p}:\ h\ d\varpi= \varpi\wedge (\eta - dh)\right\}\big/\CC.(0,\varpi).
\end{equation*}
For a generic $\varpi$, the projection of $U_p(\varpi)$ to the first coordinate defines an ideal $I_p(\varpi)\subseteq \OO_{\CC^{n+1},p}$. This ideal gives a good algebraic structure to study $U_p(\varpi)$ and was used by Suwa to classify first order unfoldings of rational and logarithmic foliations, see \cite{suwa-meromorphic,suwa-multiform}. We refer the reader to \cite{suwa-review} for a review of his work.

\medskip

For $\omega\in \FF^1(\PP^n,e)$, first order unfoldings can be parameterized in an analogous way as
\begin{equation*}
\left\{(h,\eta)\in H^0(\PP^n,\OO_{\PP^n}(e))\times H^0(\PP^n,\Omega^1_{\PP^n}(e))\,\colon\, h\ d\omega  = \omega\wedge (\eta - dh)\right\}\big/\CC.(0,\omega).
\end{equation*}
Since $U(\omega)$ is a finite dimensional vector space there is no ideal associated to it. To remedy this shortcoming one can proceed as follows. Let $S=\CC[x_0,\ldots,x_n]$ be the ring of homogeneous coordinates in $\PP^n$ and consider $\omega$ as an affine differential form in $\CC^{n+1}$, the cone of $\PP^n$. Then we recall from \cite{moli} the $S$-module of \emph{graded projective unfoldings},
\[
\UUb(\omega) = \left\{(h,\eta)\in S\times \Omega^1_{S}: \ L_R(h)\ d\omega = L_R(\omega)\wedge(\eta - dh) \right\}\big/ S.(0,\omega).
\]
where $L_R$ is the Lie derivative with respect to the radial vector field $R=\sum_{i=0}^nx_i\frac{\partial}{\partial x_i}$, see Definition \ref{unf-unf}.

\medskip

The projection of $\UUb(\omega)$ to the first coordinate defines an ideal $I(\omega)\subseteq S$ emulating the situation in the local analytic setting. We will call $I(\omega)$ the \emph{ideal of graded projective unfoldings} of $\omega$, or simply, the \emph{ideal of unfoldings of $\omega$} if no confusion can arise. As we will show later in Proposition \ref{I/J}, the \emph{classes of isomorphism of graded unfoldings} of $\omega$ can be computed by a quotient of $I(\omega)$.

\medskip

To achieve a deeper understanding of $\sing(\omega)$, the varieties $\KK_{set}$ and $\LL_{set}$ had to be redefined as subschemes $\KK$ and $\LL$, respectively. See Section \ref{teo} for these two definitions, as well as for an example showing that the reduced structure of $\KK$ might differ from $\KK_{set}$. Two of our main results show how to relate the module of isomorphism classes of unfoldings with the homogeneous coordinate ring of $\LL$, generalizing \cite[Theorem 5.1.4, p.~18]{moli}. Another of our main results states the relation between the ideals $I(\omega)$  and the graded ideal of $\KK$.
We summarize these results bellow and refer the reader to Theorem \ref{teo1} and Corollary \ref{prop1}
for complete statements.

\begin{theorem}
Let $\omega\in\FF^1(\PP^n,e)$ be a generic foliation
and denote by $K$ and $L$ the ideals associated to $\KK$ and $\LL$, respectively. Then
\[
\sqrt{I(\omega)}=\sqrt{K}.
\]
Even more so, if $K$ and the ideal of $\sing(d\omega)$ are comaximal, then there is an isomorphism of $S$-modules
\[
I(\omega)\big/J(\omega)\cong S\big/ L,
\]
where $J(\omega)$ is the singular ideal of $\omega$.
\end{theorem}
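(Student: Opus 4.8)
The plan is to reduce both claims to a single identity about the ideal $I(\omega)$ and then extract that identity from the local division property. First I would unwind $\UUb(\omega)$: for homogeneous $h$ of degree $m\ge 1$ one has $L_R(h)=mh$ and $L_R(\omega)=e\omega$, so the defining relation becomes $\omega\wedge\eta=\frac{m}{e}\,h\,d\omega+\omega\wedge dh$; since $\omega\wedge dh\in\omega\wedge\Omega^1_S$ and adding a multiple of $\omega$ to $\eta$ is exactly what is divided out in $\UUb(\omega)/S\cdot(0,\omega)$, such an $\eta$ exists iff $h\,d\omega\in\omega\wedge\Omega^1_S$. Thus $I(\omega)=\{h\in S:\ h\,d\omega\in\omega\wedge\Omega^1_S\}$. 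Writing $\omega=\sum a_i\,dx_i$, the identity $i_{\partial_i}(\omega\wedge d\omega)=a_i\,d\omega-\omega\wedge i_{\partial_i}d\omega=0$ gives $a_i\,d\omega=\omega\wedge i_{\partial_i}d\omega$, hence $a_i\in I(\omega)$ and $J(\omega)\subseteq I(\omega)$, so that $I(\omega)/J(\omega)$ is a well-defined graded $S$-module.

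For $\sqrt{I(\omega)}=\sqrt K$ I would prove the two inclusions. The inclusion $\sqrt{I(\omega)}\subseteq\sqrt K$ is the easy one: every $h\in I(\omega)$ vanishes on $\KK_{set}$, because at a Kupka point $p$ the relation $h(p)\,d\omega(p)=\omega(p)\wedge\gamma(p)=0$ together with $d\omega(p)\ne 0$ forces $h(p)=0$, and Kupka points are dense in $\KK_{set}$. The reverse inclusion $V(I(\omega))\subseteq\KK_{set}$ is the delicate one: it suffices to show $I(\omega)\not\subseteq\pp$ for every minimal prime $\pp$ of $J(\omega)$ with $V(\pp)\not\subseteq\KK_{set}$, and for generic $\omega$ such a $V(\pp)$ has codimension $\ge 3$ and lies inside $\sing(d\omega)$. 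At a generic point $q$ of $V(\pp)$, the local division property for $d\omega$ (Theorem \ref{division}) --- equivalently Malgrange's local integrating factor \cite{m-f1} --- produces a unit $f$ with $f\,d\omega=-\omega\wedge df$, i.e.\ a unit in the local unfoldings ideal at $q$; comparing that ideal with the localization of $I(\omega)$ gives $I(\omega)_\pp=S_\pp$, hence $I(\omega)\not\subseteq\pp$. This yields $V(I(\omega))=\KK_{set}$, that is $\sqrt{I(\omega)}=\sqrt K$.

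For the module isomorphism under comaximality I would use Proposition \ref{I/J}, which presents the module of isomorphism classes of graded unfoldings as $I(\omega)/J(\omega)$, so the task reduces to identifying this with $S/L$. Since $\KK$ and $\LL$ form a scheme-theoretic decomposition $\sing(\omega)=\KK\cup\LL$, we have $J(\omega)=K\cap L$, and since $\LL\subseteq\sing(d\omega)$ the ideal $L$ contains the ideal of $\sing(d\omega)$; hence the comaximality hypothesis upgrades to $K+L=S$ (equivalently, working with saturated ideals or the associated sheaves, to $\KK\cap\sing(d\omega)=\emptyset$). The plan is then to refine the previous step to the scheme level so as to obtain $I(\omega)=K$: at primes lying over $\LL$, comaximality and $\sqrt{I(\omega)}=\sqrt K$ localize both $K$ and $I(\omega)$ to the unit ideal, while at the remaining primes the local division property pins the localization of $I(\omega)$ to the local equations of $\KK$; since the two ideals then agree at every localization, $I(\omega)=K$. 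Finally the second isomorphism theorem gives
\[
I(\omega)/J(\omega)=K/(K\cap L)\ \xrightarrow{\ \sim\ }\ (K+L)/L=S/L,
\]
an isomorphism of graded $S$-modules (concretely, if $1=k+l$ with $k\in K$, $l\in L$, then the class of $k$ generates $I(\omega)/J(\omega)$ and $s\mapsto[sk]$ has kernel exactly $L$). Together with Proposition \ref{I/J} this identifies the module of isomorphism classes of unfoldings with the homogeneous coordinate ring $S/L$ of $\LL$.

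The main obstacle will be the comparison, used in both of the last two steps, between the algebraic ideal $I(\omega)$ and the analytic local unfoldings ideal --- that is, globalizing the local input (Malgrange's integrating factors, or the explicit local division of $d\omega$ by $\omega$ furnished by Theorem \ref{division}) into the homogeneous identities $\sqrt{I(\omega)}=\sqrt K$ and $I(\omega)=K$ --- and this is precisely where genericity of $\omega$ is needed. One cannot bypass it by a plain de Rham--Saito division: although $\omega\wedge(h\,d\omega)=0$ always holds by integrability, dividing the $2$-form $h\,d\omega$ by the $1$-form $\omega$ would require the coefficient ideal $J(\omega)$ to have depth at least $3$, whereas $\sing(\omega)$ has codimension $2$, so $\mathrm{depth}\,J(\omega)\le 2$. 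It is exactly this failure that makes $I(\omega)$ a proper ideal and ties it to the Kupka locus.
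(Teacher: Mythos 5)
Your overall strategy matches the paper's: the radical equality comes from showing that every point off the Kupka locus is a ``division point'' (via Malgrange / the local division theorem), and the module isomorphism comes from a second-isomorphism-theorem argument. However, there are three concrete gaps. First, your ``easy'' inclusion $\sqrt{I}\subseteq\sqrt{K}$ silently identifies $V(K)$ with $\KK_{set}$: showing that $h\in I$ vanishes at Kupka points gives $\sqrt{I}\subseteq \II(\KK_{set})$, but the Kupka \emph{scheme} ideal $K=(J\cdot\Omega^2_S:d\omega)$ can have support strictly larger than $\KK_{set}$ when $J$ is not radical (the paper's example $yz^2dx+x^2zdy-(x^2y+xyz)dz$ has $V(K)=\sing(\omega)\supsetneq\KK_{set}$), so $\II(\KK_{set})\supsetneq\sqrt{K}$ is possible. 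The paper instead uses the unconditional algebraic inclusion $I\subseteq K$: if $h\,d\omega=\omega\wedge\eta$ then $h\,d\omega\in J\cdot\Omega^2_S$ because the coefficients of $\omega$ generate $J$. With that, $I\subseteq K\subseteq\sqrt{I}$ (the last inclusion being the division-point argument you do give) yields the radical equality with no reduced-ness assumption.

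Second, for the isomorphism $I/J\cong S/L$ you assert $J=K\cap L$ as a consequence of a ``scheme-theoretic decomposition $\sing(\omega)=\KK\cup\LL$''. This decomposition fails at the scheme level in general (the paper's Dulac example exhibits components of $J$ not accounted for by $K\cap L$); the identity $K\cap L=J$ must be \emph{derived from the comaximality hypothesis}, via $K\cap L=\bigcup_{n}\bigl(J:\IIc(d\omega)+K^n\bigr)=(J:S)=J$, using that comaximal ideals have comaximal powers. Third, your route to $I=K$ ``at primes lying over $\KK$'' via the local division property cannot work: division points are by definition the points \emph{not} in $\KK$, so Theorem \ref{division} gives no information at Kupka primes, and indeed $I\neq K$ can occur even when $\sqrt{I}=\sqrt{K}$ (Example \ref{ex-transverse}). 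The fix is that $I=K$ is not needed at all: once one has $\IIc(d\omega)\subseteq L$ (hence $K+L=S$, hence $I+L=S$ since $\sqrt I=\sqrt K$) and $I\cap L= K\cap L= J$, the paper concludes directly with $I/J=I/(I\cap L)\cong (I+L)/L=S/L$. (If you do want $I=K$ under these hypotheses, it follows in one line from $1=i+l$ with $i\in I$, $l\in L$: for $k\in K$, $k=ki+kl\in I+(K\cap L)\subseteq I$ --- not from any local division at Kupka points.)
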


\medskip

In Section \ref{teo} we also give a series of specific examples exposing different situations. The computations involved in such examples were done by using the computer algebra software Macaulay 2  together with the differential algebra package DiffAlg, see \cite{M2} and \cite{DiffAlgM2}, respectively.

\bigskip

Finally, in Section \ref{app} we apply the previous results to pullback and split tangent sheaf foliations and compute their unfoldings ideal. Let us recall their definitions.

Given a dominating homogeneous map $\xymatrix@1{F:\PP^n\ar@{-->}[r] & \PP^2}$ and a differential form $\omega$ in $\PP^2$, the pullback $F^*\omega$ defines an integrable differential form in $\PP^n$. The set of such pullbacks, for fixed degrees on $F$ and $\omega$, defines an irreducible component of the space of foliations, as it is shown in \cite{pullback}.

A foliation with split tangent sheaf in $\PP^n$ can be written as
\[
\omega =i_R i_{X_1}\cdots i_{X_{n-1}} dx_0\wedge\ldots\wedge dx_n,
\]
where $X_1,\ldots, X_{n-1}$ are vector fields and $R$ is the radial vector field. These foliations are a generalization of foliations associated to affine Lie algebras, which where studied in \cite{omegar2}.
They form an irreducible component of the space of foliations, see \cite{fj}.

\begin{acknowledgements}
We would like to thank Fernando Cukierman and Tatsuo Suwa for their valuable comments and suggestions.
\end{acknowledgements}

\section{Codimension one foliations}\label{foliations}

Along this section we first give basic definitions for foliations in $\PP^n$. Then, we state and prove a division lemma for integrable forms on smooth varieties.

\medskip

Let us denote $\Omega^1_{\PP^n}(e)$ the sheaf of twisted differential 1-forms in $\PP^n$ of degree $e$.

\begin{definition} We will say that a generically rank 1 subsheaf $\FF$ of $\Omega^1_{\PP^n}(e)$, $e\geq 2$ is an
\emph{algebraic foliation of codimension 1} on $\PP^n$, a \emph{foliation} from now on, if $\FF$ is generated by a non zero global section
$\omega\in H^0(\PP^n,\Omega^1_{\PP^n}(e))$ such that
$\omega\wedge d\omega = 0$. We recall from the introduction that such foliations have degree $e-2$.
\end{definition}

\medskip

A foliation is required to have singular locus of codimension greater than 2. As we will show below, this is equivalent to ask that $\omega$ is not of the form $f.\omega'$, for some global section $f\in H^0(\PP^n,\OO_{\PP^n}(d))$
and a 1-form $\omega'\in H^0(\PP^n,\Omega^1_{\PP^n}(e-d))$. Also, integrable differential 1-forms define the same foliation up to scalar multiplication. Then, we will denote the set of codimension 1 foliations of degree $e-2$ as
\[
\FF^1(\PP^n,e) := \left\{\omega\in\PP\left(H^0(\PP^n,\Omega^1_{\PP^n}(e))\right):\ \omega\wedge d\omega=0,\ \codim(\sing(\omega))\geq 2 \right\}.
\]

We can give to $\FF^1(\PP^n,e)$ a subscheme structure defined by the equations $\omega\wedge d\omega = 0$.

As we are going to fix one generator for each foliation we might refer simply as $\omega$ to the foliation $\FF=(\omega)$.

\bigskip

Let us denote by $S$ the ring of homogeneous coordinates $\CC[x_0,\ldots,x_n]$. Then, a foliation defined by $\omega$ can be written as
\begin{equation}\label{foliations-omega}
 \begin{aligned}
  \omega=\sum_{i=0}^n A_i dx_i
 \end{aligned}
\end{equation}
where the $A_i$'s are homogeneous polynomials of degree $e-1$ that verify the integrability condition $\omega\wedge d\omega=0$ and the property of descent to projective space. The latter condition can be stated as the vanishing of the contraction of $\omega$ with the radial field $R=\sum x_i\frac{\partial}{\partial x_i}$.

\

By eq. \ref{foliations-omega}, the ideal of the singular locus of $\omega$ is given by
\[
\IIc(\omega) := (A_0,\ldots, A_n).
\]
From now on we will denote by $\IIc(\eta)$ the ideal generated by the polynomial coefficients of the differential form $\eta\in\Omega^r_S$. Note that this ideal may not be radical.

\bigskip

The \emph{Koszul complex} associated  to $\omega$, noted with $Kosz^\bullet(\omega)$, is defined as
\[
\xymatrix@C=30pt{
Kosz^\bullet(\omega): & S\ar[r]^-{\omega\wedge} & \Omega^1_{S} \ar[r]^-{\omega\wedge} & \Omega^2_{S} \ar[r]^-{\omega\wedge} & \ldots &
}
\]
We will usually denote the $p$-th homology group of this complex as $H^p(\omega)$.
The homology of $Kosz^\bullet(\omega)$ is able to compute the codimension of the singular set of $\omega$, by the well known result:
\begin{theorem}\label{koszul-teo}
For $\omega\in H^0(\PP^n,\Omega^1_{\PP^n}(e))$ the following are equivalent:
\begin{enumerate}
\item $\codim(\sing(\omega))\geq k$
\item $H^l(\omega)= 0$ for all $l<k$
\end{enumerate}
\end{theorem}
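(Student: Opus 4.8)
The plan is to reduce the assertion to the depth-sensitivity of an ordinary Koszul complex. Write $\omega=\sum_{i=0}^n A_i\,dx_i$ and recall that $\Omega^p_S=\bigwedge^p\!\big(\bigoplus_{i=0}^n S\,dx_i\big)$ is free of rank $\binom{n+1}{p}$, so $Kosz^\bullet(\omega)$ is a complex of finite free $S$-modules of length $n+1$, with top term $\Omega^{n+1}_S$. Fixing the generator $dx_0\wedge\cdots\wedge dx_n$ of $\Omega^{n+1}_S$, the contraction pairing $\bigwedge^p\times\bigwedge^{n+1-p}\to\bigwedge^{n+1}\cong S$ identifies $\Omega^p_S$ with the $S$-dual of $\Omega^{n+1-p}_S$ up to an internal degree twist, and under this identification left multiplication $\omega\wedge(-)\colon\Omega^p_S\to\Omega^{p+1}_S$ becomes the Koszul differential $\iota\colon\Omega^{n+1-p}_S\to\Omega^{n-p}_S$ given by contraction against the vector $(A_0,\dots,A_n)$. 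Hence, up to the reindexing $p\mapsto n+1-p$ and a shift of grading, $Kosz^\bullet(\omega)$ coincides with the usual homological Koszul complex $K_\bullet(A_0,\dots,A_n;S)$, and $H^p(\omega)\cong H_{n+1-p}\big(K_\bullet(A_0,\dots,A_n;S)\big)$ up to twist. In particular $H^{n+1}(\omega)\cong S/\IIc(\omega)$.

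Next I invoke the classical depth-sensitivity theorem for Koszul homology (Bruns--Herzog, Eisenbud): for a sequence $f_1,\dots,f_m$ in a Noetherian ring generating an ideal $I$ with $I\neq S$, one has $H_i(K_\bullet(f_1,\dots,f_m;S))=0$ for every $i>m-\operatorname{depth}(I,S)$, and $H_{m-\operatorname{depth}(I,S)}(K_\bullet)\neq 0$; thus the top index of nonvanishing Koszul homology is exactly $(n+1)-\operatorname{depth}(\IIc(\omega),S)$ when we take $m=n+1$ and $I=\IIc(\omega)$. Since $S$ is Cohen--Macaulay, $\operatorname{depth}(\IIc(\omega),S)=\operatorname{ht}(\IIc(\omega))=\codim(\sing(\omega))$.

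Combining the two steps: the condition $H^l(\omega)=0$ for all $l<k$ is, by the duality of the first step, equivalent to $H_j(K_\bullet(A_0,\dots,A_n;S))=0$ for all $j>n+1-k$; by depth-sensitivity this holds if and only if $(n+1)-\operatorname{depth}(\IIc(\omega),S)\le n+1-k$, i.e. $\operatorname{depth}(\IIc(\omega),S)\ge k$, i.e. $\codim(\sing(\omega))\ge k$. This is the desired equivalence. The degenerate case $\IIc(\omega)=0$ (so $\sing(\omega)=\PP^n$) is immediate, since then already $H^0(\omega)=\{f\in S: f\omega=0\}\neq 0$ while $\codim(\sing(\omega))=0$.

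The only non-elementary ingredient is the depth-sensitivity theorem, and within it the nonvanishing half $H_{m-\operatorname{depth}}(K_\bullet)\neq 0$, which supplies the implication that low codimension of $\sing(\omega)$ forces some low Koszul cohomology of $\omega$ to survive; the vanishing half, giving the reverse implication, is a routine induction on $m$ (a nonzerodivisor on the relevant quotient kills the top Koszul homology). The genuinely fiddly but routine point is the bookkeeping in the first step: one must pin down the signs in the contraction--wedge duality and the internal twists carried by $\Omega^p_S=\bigwedge^p\Omega^1_S$ so that the identification of the two complexes is an honest isomorphism of complexes, not merely a term-by-term match.
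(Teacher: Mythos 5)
Your argument is correct and is essentially the paper's own proof: the paper proves this theorem by citing Eisenbud, Theorem 17.4 (depth sensitivity of the Koszul complex), which is exactly the result you reduce to after identifying $Kosz^\bullet(\omega)$ with the contraction Koszul complex of the sequence $(A_0,\dots,A_n)$ via the self-duality of $\bigwedge^\bullet S^{n+1}$. The only content you add beyond the citation is making that identification, and the equality $\operatorname{depth}(\IIc(\omega),S)=\operatorname{ht}(\IIc(\omega))=\codim(\sing(\omega))$ coming from the Cohen--Macaulayness of $S$, explicit.
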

\begin{proof}
 See \cite[Appendix, p.~172]{m-f1} or \cite[Appendix, p.~87]{m-f2} for two proofs with different level of generalities in the local analytic  setting and \cite[Theorem 17.4, p.~424]{eisenbud} for a purely algebraic proof of our statement. 

\end{proof}

\bigskip

Note that $H^1(\omega)=0$ is equivalent to have $\codim(\sing(\omega))\geq 2$, as we asked in the definition of foliation. The integrability condition makes $d\omega$ to be a 2-cycle of $Kosz^\bullet(\omega)$ whose
homology class is non-trivial. This can be easily seen by comparing the degrees of the polynomial
coefficients of $d\omega$ and $\omega\wedge\eta$, for some differential 1-form $\eta$.

\medskip

Then, an algebraic foliation can be defined by a form $\omega\in\FF^1(\PP^n,e)$ 
with codimension 2 singular locus. In Theorem \ref{teo3} we will show that every foliation with reduced singular locus has Kupka points.

\subsection{Division Lemma}

Here we will prove a statement (Theorem \ref{division}) that will be applied later in Section \ref{teo} to foliations on projective space.
However, it is a result of independent interest which works in a wider context.

In the following we will do our computations in a non-singular variety $X$.
We will consider a $1$-form $\omega$ on $X$ with singular locus of codimension equal to or greater than 2.
And we will denote by $\mathcal{J}$ the ideal sheaf of $\sing(\omega)$.

Set $\ZZ^p(\omega)$ to be the module of degree $p$ cycles in the Koszul complex of $\omega$.
Set $D(\omega):=\ann(\omega)\subseteq TX$ the subsheaf of vector fields $\xi$ such that $\omega(\xi)=0$.
For a sheaf $F$ we denote by $F^\vee$ the (dual) sheaf $\mathcal{H}om(F,\OO_X)$.
We define with $N(\omega)$ the cokernel in the short exact sequence
\[
0\to D(\omega)\to TX\to N(\omega)\to 0
\]

As explained in \cite[section 4]{quallbrunn}, we have an exact sequence

\[
0\to \Omega^1_X\big/\OO_X\cdot(\omega)\to D(\omega)^\vee \to \mathcal{E}xt^1_X(N(\omega),\OO_X)\to 0
\]

We define a morphism $\Phi: \ZZ^2(\omega)\to D(\omega)^\vee$ in the following manner:
Let $\theta\in \ZZ^2(\omega)$, and $\xi\in D(\omega)$ , since $\theta\wedge \omega=0$, and $i_\xi \omega=0$ we have
\[
i_\xi\theta\wedge \omega= i_\xi (\theta\wedge\omega)=0.
\]
Then, as $H^1(\omega)=0$ we must have a unique $f\in \OO_X$ such that $i_\xi \theta= f\omega$.
We define $\Phi(\theta):D(\omega)^\vee\to \OO_X$ to be the $\OO_X$-linear map such that, to each $\xi\in D(\omega)^\vee$ assigns $\Phi(\theta)(\xi)=f$, where $i_\xi\theta = f\omega$.

\begin{lemma}
The morphism $\Phi:\ZZ^2(\omega)\to D(\omega)^\vee$ is injective.
\end{lemma}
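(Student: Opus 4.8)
The plan is to show that $\Phi$ has trivial kernel by exploiting the defining property: if $\Phi(\theta) = 0$, then $i_\xi\theta = 0$ for \emph{every} local section $\xi$ of $D(\omega)$. The key is to leverage that $D(\omega)$ is, generically, a rank $n-1$ subsheaf of $TX$ (here $n = \dim X$), so that away from $\sing(\omega)$ the vector fields in $D(\omega)$ span an $(n-1)$-dimensional subspace of the tangent space at each point; a $2$-form all of whose contractions with such a hyperplane of directions vanish must itself vanish there.

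First I would work on the open set $U = X \setminus \sing(\omega)$. On $U$ the sheaf $D(\omega)$ is locally free of rank $n-1$, and the exact sequence $0 \to D(\omega) \to TX \to N(\omega) \to 0$ shows $N(\omega)|_U$ is a line bundle, dual (up to twist) to the line generated by $\omega$. Pick a point $p \in U$ and a local frame $\xi_1, \dots, \xi_{n-1}$ of $D(\omega)$ near $p$; complete it to a frame $\xi_1, \dots, \xi_{n-1}, \xi_n$ of $TX$ near $p$. The hypothesis $\Phi(\theta) = 0$ gives $i_{\xi_j}\theta = 0$ for $j = 1, \dots, n-1$. Expanding $\theta$ in the dual coframe, the only components of $\theta$ not killed by these contractions are those involving the covector dual to $\xi_n$ twice — but a $2$-form cannot involve the same covector twice. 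Hence $\theta = 0$ in a neighborhood of $p$, so $\theta|_U = 0$.

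It then remains to pass from $U$ to all of $X$. Since $\theta$ is a section of $\Omega^2_X$, a locally free (hence torsion-free) sheaf, and since $\codim(\sing(\omega)) \geq 2$ means $\sing(\omega)$ has codimension at least $2$ in the smooth variety $X$, the restriction map $\Omega^2_X \to j_*\Omega^2_U$ is injective (a torsion-free sheaf on a normal variety injects into its sections over the complement of a codimension $\geq 2$ set; concretely, any local equation for a section vanishing on $U$ must vanish identically, as its support would be contained in $\sing(\omega)$ and a nonzero section of a locally free sheaf has support of pure codimension... more simply, $\Omega^2_X$ being locally free and $X$ integral, a section vanishing on the dense open $U$ is zero). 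Therefore $\theta = 0$, proving $\Phi$ is injective.

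The main obstacle I anticipate is the bookkeeping at the point $p \in U$: one must be careful that "$i_{\xi_j}\theta = 0$ for a frame of $D(\omega)$" genuinely forces $\theta$ to live in $\OO_X \cdot (\text{something}) \wedge (\text{something})$ that a second contraction would also kill — in other words, checking that the annihilator in $\Omega^2$ of an $(n-1)$-dimensional space of vector fields is zero rather than $1$-dimensional. This is where the rank count $\dim X = n$ versus $\operatorname{rank} D(\omega) = n - 1$ is essential: with one "free" direction left, $\theta$ would have to be a multiple of a $2$-form built from that single direction, which is impossible. If instead $D(\omega)$ had rank $n-2$, the argument would only show $\theta$ is a multiple of $\omega \wedge (\text{1-form})$, which is precisely the Koszul boundary — consistent with $\Phi$ landing in the quotient $\Omega^1_X/\OO_X\cdot(\omega)$ and explaining why injectivity, not an isomorphism onto a larger target, is the right statement here.
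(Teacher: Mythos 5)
Your proof is correct, and its global skeleton is the same as the paper's: show $\theta$ vanishes on the dense open set where $\omega$ is nonzero, then conclude $\theta=0$ because $\Omega^2_X$ is torsion-free on the integral scheme $X$. The local step, however, is genuinely different. The paper first uses the cycle condition $\theta\wedge\omega=0$ together with the nonvanishing of $\omega$ to write $\theta=\omega\wedge\eta$ locally; then $\Phi(\theta)(\xi)=-i_\xi\eta$, and $\Phi(\theta)=0$ forces $\eta\in\OO_X\cdot\omega$, hence $\theta=0$. You instead contract $\theta$ directly against a local frame $\xi_1,\dots,\xi_{n-1}$ of $D(\omega)$ and read off in the dual coframe that all coefficients of $\theta$ die, since the only surviving slot would be $e^n\wedge e^n=0$. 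Your computation is more elementary (pure multilinear algebra, no appeal to local division) and slightly more general, since it never uses $\theta\in\ZZ^2(\omega)$: it shows that \emph{any} $2$-form killed by contraction with a corank-one subbundle of $TX$ vanishes. What the paper's route buys in exchange is that it exhibits $\Phi$ as compatible with the factorization $\theta=\omega\wedge\eta$, which is exactly what makes the commutative diagram with $\Omega^1_X/\OO_X\cdot(\omega)$ and $\mathcal{E}xt^1_X(N(\omega),\OO_X)$ transparent in the sequel; your closing heuristic about rank $n-2$ gestures at this but is not needed for the lemma.
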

\begin{proof}
Take a point $p\in X$ such that $\omega\otimes k(p)\neq 0$. Then, locally in $p$, there is a $1$-form $\eta$ such that $\theta=\omega\wedge\eta$.
Therefore, for each $\xi\in D(\omega)_p$, we have $i_\xi \theta = (i_\xi\eta) \omega$.
Now take a $\theta\in \ZZ^2(\omega)$ such that $\Phi(\theta)=0$; that means that, near $p$, $i_\xi\eta=0$ for every $\xi\in D(\omega)_p$. Then, there is a $g$ such that $\eta=g\cdot \omega$, and so $\theta_p=0$.
As this would happen for every $p$ in the dense open subset where $\omega\otimes k(p)\neq 0$, we have that if $\Phi(\theta)=0$ then $\theta=0$.

\end{proof}

Then we have a diagram

\[\xymatrix{
0 \ar[r] & \Omega^1_X/\OO_X\cdot(\omega) \ar[r] \ar@{=}[d] & \ZZ^2(\omega) \ar[r] \ar@{^{(}->}[d]^{\Phi} & H^2(\omega) \ar[r] \ar@{^{(}->}[d]& 0 \\
0 \ar[r]& \Omega^1_X/\OO_X\cdot(\omega) \ar[r]& D(\omega)^\vee \ar[r]& \mathcal{E}xt^1_X(N(\omega),\OO_X) \ar[r]& 0
}
\]

We thus arrive at another proof of \cite[Proposition 4, IV-7]{serre}.
First we are going to make clear the convention we are going to use when we mention the \emph{support} of a coherent sheaf.
Given a coherent sheaf $\FF$ over a scheme $X$ we denote with  $\mathrm{supp}(\FF)$ the subscheme of $X$ defined by the \emph{annihilator} of $\FF$, that is, the sheaf of ideals of $X$ such that in every $x\in X$ is locally given by the annihilator ideal $\mathrm{Ann}(\FF_x)$ of $\FF_x$ in $\OO_{X,x}$.
\begin{proposition}\label{sup}
With the notation as above we have $\mathrm{supp}(H^2(\omega))\subseteq \sing(\omega)$.
\end{proposition}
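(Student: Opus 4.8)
The plan is to use the commutative diagram constructed just above the statement, together with the exact sequence from \cite{quallbrunn}, and reduce the claim to the elementary fact that $\mathcal{E}xt^1_X(N(\omega),\OO_X)$ is supported on $\sing(\omega)$. First I would note that since $\codim(\sing(\omega))\geq 2$, away from $\sing(\omega)$ the subsheaf $D(\omega)\subseteq TX$ is a subbundle and $N(\omega)$ is locally free of rank $1$; hence $\mathcal{E}xt^1_X(N(\omega),\OO_X)_x=0$ for every $x\notin\sing(\omega)$, which says precisely $\mathrm{supp}(\mathcal{E}xt^1_X(N(\omega),\OO_X))\subseteq\sing(\omega)$ with our annihilator convention. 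From the right-hand vertical injection $H^2(\omega)\hookrightarrow \mathcal{E}xt^1_X(N(\omega),\OO_X)$ in the diagram, it follows immediately that $\mathrm{Ann}(\mathcal{E}xt^1_X(N(\omega),\OO_X)_x)\subseteq\mathrm{Ann}(H^2(\omega)_x)$ for every $x$, so $\mathrm{supp}(H^2(\omega))\subseteq\mathrm{supp}(\mathcal{E}xt^1_X(N(\omega),\OO_X))\subseteq\sing(\omega)$.

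Alternatively — and this is perhaps the cleanest route, avoiding any discussion of $N(\omega)$ — I would argue directly from the Koszul complex. Let $x\notin\sing(\omega)$, so $\omega\otimes k(x)\neq 0$ and, locally near $x$, one of the coefficients $A_i$ of $\omega$ is a unit. Then the contraction $i_{\partial/\partial x_i}$ provides a contracting homotopy for $\omega\wedge(-)$ in a neighbourhood of $x$: for any local $p$-form $\theta$ one has $\omega\wedge i_{\partial/\partial x_i}\theta + i_{\partial/\partial x_i}(\omega\wedge\theta) = A_i\,\theta$, and dividing by the unit $A_i$ exhibits every local cycle as a boundary. Hence $H^2(\omega)_x=0$, so $\OO_{X,x}$ annihilates $H^2(\omega)_x$, giving $x\notin\mathrm{supp}(H^2(\omega))$. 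Taking the complement yields $\mathrm{supp}(H^2(\omega))\subseteq\sing(\omega)$.

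I expect no serious obstacle here; the statement is essentially a reformulation of the classical fact (the cited \cite[Proposition 4, IV-7]{serre}) that the Koszul homology of a regular-enough section is supported on its zero locus, and the preceding lemma and diagram have already set up exactly the machinery needed. The one point requiring a little care is bookkeeping with the annihilator convention for $\mathrm{supp}$ just introduced in the paragraph before the proposition: an inclusion of sheaves $\FF'\hookrightarrow\FF$ gives $\mathrm{supp}(\FF')\subseteq\mathrm{supp}(\FF)$ because $\mathrm{Ann}(\FF_x)\subseteq\mathrm{Ann}(\FF'_x)$, and one should state this explicitly rather than appeal to the (coarser) set-theoretic support. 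I would therefore write the proof in the second form, invoking the local contracting homotopy, and close with a one-line remark that this also recovers \cite[Proposition 4, IV-7]{serre} via the diagram, since that is the framing the authors have chosen to emphasize.
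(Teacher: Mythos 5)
Both of your routes are sound, but as written they only establish the \emph{set-theoretic} containment, and under the convention introduced in the paragraph immediately preceding the proposition that is not quite the statement: there $\mathrm{supp}(\FF)$ is the \emph{subscheme} cut out by the annihilator, so $\mathrm{supp}(H^2(\omega))\subseteq \sing(\omega)$ means $\mathcal{J}\subseteq \mathrm{Ann}(H^2(\omega))$, i.e.\ every local coefficient $A_i$ of $\omega$ kills every class in $H^2(\omega)$ --- which is exactly the content of \cite[Proposition 4, IV-7]{serre}. The paper gets this by combining the injection $H^2(\omega)\hookrightarrow \mathcal{E}xt^1_X(N(\omega),\OO_X)$ from the diagram with \cite[Remark 7.7, p.~178]{quallbrunn}, which identifies the \emph{annihilator} of $\mathcal{E}xt^1_X(N(\omega),\OO_X)$ with the ideal generated by the coefficients of $\omega$. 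Your substitute for that citation (local freeness of $N(\omega)$ off $\sing(\omega)$, hence stalkwise vanishing of $\mathcal{E}xt^1$) only shows the two supports have the same underlying \emph{set}; vanishing of stalks outside $V(\mathcal{J})$ does not give $\mathcal{J}\subseteq\mathrm{Ann}$, so ``says precisely'' is an overstatement. The same issue affects your preferred second route: by placing yourself at a point where some $A_i$ is a unit and dividing by it, you again only control stalks outside $\sing(\omega)_{set}$.

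The good news is that your homotopy identity already contains the full scheme-theoretic statement, with no genericity assumption and no division: for \emph{any} point and \emph{any} cycle $\theta\in\ZZ^2(\omega)$, the relation $i_{\partial/\partial x_i}(\omega\wedge\theta)+\omega\wedge i_{\partial/\partial x_i}\theta=A_i\,\theta$ together with $\omega\wedge\theta=0$ exhibits $A_i\theta$ as a boundary, so $A_i\cdot H^2(\omega)=0$ for every $i$ and hence $\mathcal{J}\subseteq\mathrm{Ann}(H^2(\omega))$. Written that way your Koszul argument is a correct and genuinely different proof from the paper's --- completely elementary, independent of the diagram, of $N(\omega)$, and of the citation to \cite{quallbrunn}; it is in fact Serre's original argument. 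What the paper's route buys instead is the stronger identification $\mathrm{supp}(\mathcal{E}xt^1_X(N(\omega),\OO_X))=\sing(\omega)$ as schemes, which is invoked again in the proof of Theorem \ref{division}; so if you adopt the homotopy proof here, keep the diagram and \cite[Remark 7.7, p.~178]{quallbrunn} on hand for later use.
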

\begin{proof}
By the diagram above we have $\mathrm{supp}(H^2(\omega))\subseteq \mathrm{supp}(\mathcal{E}xt^1_X(N(\omega),\OO_X)$.
As explained in \cite[Remark 7.7, p.~178]{quallbrunn} the annihilator of $\mathcal{E}xt^1_X(N(\omega),\OO_X)$ is locally defined by the coefficients of $\omega$. 
So the subscheme $\mathrm{supp}(\mathcal{E}xt^1_X(N(\omega),\OO_X)$ is just $\sing(\omega)$.
\end{proof}

\begin{proposition}\label{propDiv}
Suppose $\theta \in \mathcal{J}\cdot \Omega^2_X \cap \ZZ^2(\omega)$ and $D(\omega)\subseteq \mathcal{J}\cdot TX$. Then there is an $\eta\in \Omega^1_X$ such that $\theta = \omega\wedge \eta$.
\end{proposition}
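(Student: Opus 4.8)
The plan is to show that the Koszul cohomology class of $\theta$ in $H^2(\omega)$ vanishes, and then to produce $\eta$ by a Hartogs-type extension. I would work locally on a trivializing open set, write $\omega=\sum_i a_i\,dx_i$ (so $\mathcal{J}=(a_1,\dots,a_N)$) and set $\eta_l:=i_{\partial_l}\theta$. Contracting $\theta\wedge\omega=0$ with $\partial_l$ gives $\omega\wedge\eta_l=a_l\theta$, whence $\omega\wedge(a_k\eta_l-a_l\eta_k)=0$; since $H^1(\omega)=0$ one can write $a_k\eta_l-a_l\eta_k=h_{kl}\,\omega$ for unique $h_{kl}\in\OO_X$. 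Multiplying by $a_j$ and summing cyclically over $(j,k,l)$ forces $a_j h_{kl}-a_k h_{jl}+a_l h_{jk}=0$, i.e.\ the $2$-form $h:=\sum_{k<l}h_{kl}\,dx_k\wedge dx_l$ belongs to $\ZZ^2(\omega)$. If I can show $[h]=0$ in $H^2(\omega)$, say $h_{kl}=a_k\mu_l-a_l\mu_k$, then $\nu_l:=\eta_l-\mu_l\omega$ satisfies both $a_k\nu_l=a_l\nu_k$ and $\omega\wedge\nu_l=a_l\theta$; hence $\eta:=\nu_l/a_l$ is a well-defined section of $\Omega^1_X$ over $X\setminus\sing(\omega)$ with $\omega\wedge\eta=\theta$ there, and since $\sing(\omega)$ has codimension $\geq2$ and $\Omega^1_X$ is locally free, $\eta$ extends over $X$; by torsion-freeness of $\Omega^2_X$ the identity $\omega\wedge\eta=\theta$ then holds globally.

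So the whole statement comes down to the vanishing of $[h]$ in $H^2(\omega)$, and this is where both hypotheses must be used. Since $\theta\in\mathcal{J}\cdot\Omega^2_X$, each $\eta_l=i_{\partial_l}\theta$ has coefficients in $\mathcal{J}$, so $a_k\eta_l\in\mathcal{J}^2\cdot\Omega^1_X$, and comparing with $h_{kl}\omega$ gives $h_{kl}\cdot\mathcal{J}\subseteq\mathcal{J}^2$ for all $k,l$. Now I would feed this into the morphism $\Phi\colon\ZZ^2(\omega)\to D(\omega)^\vee$ constructed above: for $\xi\in D(\omega)$ one has $i_\xi h=\Phi(h)(\xi)\,\omega$, and since $D(\omega)\subseteq\mathcal{J}\cdot TX$ the field $\xi$ has coefficients in $\mathcal{J}$, so $i_\xi h$ has coefficients in $\mathcal{J}\cdot(\mathcal{J}^2:\mathcal{J})\subseteq\mathcal{J}^2$; hence $\Phi(h)(\xi)\cdot\mathcal{J}\subseteq\mathcal{J}^2$. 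The goal is to upgrade this to the statement that $\Phi(h)$ lies in $\mathcal{J}\cdot D(\omega)^\vee$: then, since the connecting homomorphism $D(\omega)^\vee\to\mathcal{E}xt^1_X(N(\omega),\OO_X)$ is $\OO_X$-linear and (by the proof of Proposition~\ref{sup}) $\mathcal{J}$ annihilates $\mathcal{E}xt^1_X(N(\omega),\OO_X)$, the image of $\Phi(h)$ there is zero, and injectivity of $H^2(\omega)\hookrightarrow\mathcal{E}xt^1_X(N(\omega),\OO_X)$ forces $[h]=0$. (The same reasoning applied to $\theta$ directly yields $[\theta]=0$ in $H^2(\omega)$, an alternative, if less constructive, way to finish.) The mechanism for the upgrade is a Cayley--Hamilton argument: the identities $h_{kl}a_j=\sum_i a_i c_{ij}$ with $c_{ij}\in\mathcal{J}$ say that the nonzero row $(a_1,\dots,a_N)$ lies in the left kernel of the matrix $(c_{ij})-h_{kl}\,\mathrm{Id}$, and since $\mathrm{grade}(\mathcal{J})\geq2$ guarantees a nonzerodivisor in $\mathcal{J}$, the determinant of that matrix vanishes, showing that $h_{kl}$ is integral over $\mathcal{J}$.

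The hard part will be precisely this last upgrade --- turning the ``soft'' integral-dependence conclusion $h_{kl}\cdot\mathcal{J}\subseteq\mathcal{J}^2$ into honest $\mathcal{J}$-divisibility of the functional $\Phi(h)\in D(\omega)^\vee$ --- and it is exactly here that the hypothesis $D(\omega)\subseteq\mathcal{J}\cdot TX$ (which fails for a generic integrable form, consistently with $[d\omega]\neq0$) does real work rather than bookkeeping. In the write-up I would isolate this as a separate commutative-algebra lemma: for a submodule $D(\omega)\subseteq\mathcal{J}\cdot TX$ of the locally free sheaf $TX$, an $\OO_X$-linear functional on $D(\omega)$ whose values multiply $\mathcal{J}$ into $\mathcal{J}^2$ must lie in $\mathcal{J}\cdot D(\omega)^\vee$; I expect to prove it by choosing a finite presentation of $D(\omega)$ locally and chasing the condition through the dual complex, again using that $\mathcal{J}$ contains a nonzerodivisor.
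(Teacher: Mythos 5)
Your overall mechanism is the right one and is essentially the paper's: push the Koszul $2$-cycle through the morphism $\Phi$ into $D(\omega)^\vee$, show its image lands in $\mathcal{J}\cdot D(\omega)^\vee$, use that $\mathcal{J}$ annihilates $\mathcal{E}xt^1_X(N(\omega),\OO_X)$ (Proposition \ref{sup}) together with the injectivity of $H^2(\omega)\hookrightarrow \mathcal{E}xt^1_X(N(\omega),\OO_X)$ to conclude the class vanishes. But there is a genuine gap precisely at the step you yourself label ``the hard part,'' and the route you sketch for closing it cannot succeed as written. From $\Phi(h)(\xi)\cdot\mathcal{J}\subseteq\mathcal{J}^2$ your Cayley--Hamilton argument only shows that the relevant functions are \emph{integral over} $\mathcal{J}$, i.e.\ lie in the integral closure $\overline{\mathcal{J}}$; that is strictly weaker than membership in $\mathcal{J}$ (e.g.\ $xy$ is integral over $(x^2,y^2)$ but not in it), so it does not place $\Phi(h)$ in $\mathcal{J}\cdot D(\omega)^\vee$, and the auxiliary commutative-algebra lemma you defer to is left unproved. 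The paper closes this point directly on $\theta$ itself: for $\xi\in D(\omega)\subseteq\mathcal{J}\cdot TX$ and $\theta\in\mathcal{J}\cdot\Omega^2_X$ one has $i_\xi\theta\in\mathcal{J}^2\cdot\Omega^1_X$; writing $i_\xi\theta=f\omega$ and noting that the coefficients of $\omega$ generate $\mathcal{J}$ gives $f\cdot\mathcal{J}\subseteq\mathcal{J}^2$ and hence $f\in\mathcal{J}$ (this uses $(\mathcal{J}^2:\mathcal{J})=\mathcal{J}$, which holds for instance when $\mathcal{J}$ is radical --- the situation of Theorem \ref{division} --- by localizing at the minimal primes and applying Nakayama). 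Then $\Phi(\theta)\in \mathcal{H}om(D(\omega),\mathcal{J})=\mathcal{J}\cdot D(\omega)^\vee$ dies in $\mathcal{E}xt^1_X(N(\omega),\OO_X)$, so $[\theta]=0$ in $H^2(\omega)$. You have correctly located where both hypotheses do their work, but you have not actually crossed the bridge from $(\mathcal{J}^2:\mathcal{J})$ to $\mathcal{J}$.

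A secondary point: the detour through the auxiliary cycle $h$ and the reconstruction of $\eta$ by division and Hartogs extension across $\sing(\omega)$ buys you nothing. The vanishing $[\theta]=0$ in $H^2(\omega)$ \emph{is} the statement $\theta=\omega\wedge\eta$ --- that is the definition of the second Koszul homology --- so the ``less constructive'' alternative you mention parenthetically is the entire proof, and it is the one the paper gives. Keeping the $h$-construction only adds verifications (well-definedness of $\nu_l/a_l$, reflexivity of $\Omega^1_X$) while the essential difficulty, proving that a class in $H^2(\omega)$ vanishes, is exactly the same for $h$ as for $\theta$.
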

\begin{proof}
With the hypotheses we have that, for every $\xi \in D(\omega)$, $i_\xi\theta\in \mathcal{J}^2\cdot \Omega^1_X$.
Then the $f$ such that $i_\xi\theta = f\omega$ must be in $\mathcal{J}$.
In other words $\Phi(\theta)$ is a morphism from $D(\omega)$ to $\mathcal{J}$.
As $Hom(D(\omega),\mathcal{J})= \mathcal{J}\cdot D(\omega)^\vee$ we have that the class of $\Phi(\theta)$ in $\mathcal{E}xt^1_X(N(\omega),\OO_X)$ is $0$, and so is the class of $\theta$ in $H^2(\omega)$.

\end{proof}

\begin{corollary}\label{corDiv}
If $X$ has dimension 2 then for every $\theta\in  \mathcal{J}\cdot \Omega^2_X \cap \ZZ^2(\omega)$ there is an $\eta$ such that $\theta=\omega\wedge \eta$.
\end{corollary}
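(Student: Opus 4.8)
The plan is to deduce this at once from Proposition \ref{propDiv}. The hypothesis $\theta\in\mathcal{J}\cdot\Omega^2_X\cap\ZZ^2(\omega)$ is already assumed, so what must be checked is that the remaining hypothesis of that proposition, namely $D(\omega)\subseteq\mathcal{J}\cdot TX$, is automatic when $\dim X=2$. This is a local statement, so I would work in the local ring $\OO_{X,p}$ at a point $p\in X$, choose local coordinates $x_1,x_2$ in which $\omega=f_1\,dx_1+f_2\,dx_2$, and recall, as used in the proof of Proposition \ref{sup}, that $\sing(\omega)$ is locally defined by the coefficients of $\omega$, so that $\mathcal{J}_p=(f_1,f_2)$. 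Away from $\sing(\omega)$ one has $\mathcal{J}_p=\OO_{X,p}$ and there is nothing to prove; thus the only case is $p\in\sing(\omega)$.

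The key observation is that on a surface, $\codim\sing(\omega)\geq2$ forces $\sing(\omega)$ to be zero-dimensional, so $(f_1,f_2)$ is $\mathfrak{m}_p$-primary; since $\OO_{X,p}$ is a two-dimensional regular (hence Cohen--Macaulay) local ring, $f_1,f_2$ is a regular sequence. A local vector field $\xi=g_1\partial_{x_1}+g_2\partial_{x_2}$ lies in $D(\omega)$ exactly when $g_1f_1+g_2f_2=0$, i.e.\ when $(g_1,g_2)$ is a syzygy of $(f_1,f_2)$; for a regular sequence of length two the syzygy module is generated by the Koszul relation $(f_2,-f_1)$, so $(g_1,g_2)=h\,(f_2,-f_1)$ for some $h\in\OO_{X,p}$, whence $\xi=h\,(f_2\partial_{x_1}-f_1\partial_{x_2})\in(f_1,f_2)\cdot TX_p=\mathcal{J}_p\cdot TX_p$. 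Equivalently, one can argue via unique factorization in the regular local ring: $\codim\sing(\omega)\geq2$ means $f_1$ and $f_2$ have no common factor, so $g_1f_1=-g_2f_2$ forces $f_2\mid g_1$ and $f_1\mid g_2$. Checking this stalkwise gives $D(\omega)\subseteq\mathcal{J}\cdot TX$, and Proposition \ref{propDiv} then produces the desired $\eta$ with $\theta=\omega\wedge\eta$.

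The argument is short, so I do not expect a serious obstacle; the only point requiring care is the identification $D(\omega)_p=\mathrm{Syz}(f_1,f_2)$ together with the codimension-$2$ input that makes this syzygy module generated by a single Koszul relation lying inside $\mathcal{J}\cdot TX$. This is precisely where the dimension hypothesis $\dim X=2$ is used, as opposed to the weaker codimension bound in Proposition \ref{propDiv}, since in higher dimension the syzygies of the coefficient ideal need not be contained in $\mathcal{J}\cdot TX$ and the containment $D(\omega)\subseteq\mathcal{J}\cdot TX$ becomes a genuine extra condition.
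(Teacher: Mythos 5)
Your proposal is correct and follows essentially the same route as the paper: both reduce to verifying the hypothesis $D(\omega)\subseteq\mathcal{J}\cdot TX$ of Proposition \ref{propDiv}, which the paper asserts by noting that in dimension 2 the sheaf $D(\omega)$ is generated by the single field $\xi=f_2\partial_{x_1}-f_1\partial_{x_2}$ with $\sing(\xi)=\sing(\omega)$ as schemes. Your syzygy/regular-sequence argument is just the explicit justification of that assertion, so the two proofs coincide in substance.
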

\begin{proof}
When $X$ is 2-dimensional we have that $D(\omega)$ is generated by a single field $\xi$ and $\sing(\omega) = \sing(\xi)$ as schemes, so $D(\omega)\subseteq \mathcal{J}\cdot TX$.

\end{proof}

\begin{theorem}\label{division}
Let $\omega$ be an integrable 1-form in a smooth variety $X$ and $p\in \sing(\omega)$ be such that $\mathcal{J}(\omega)_p$ is radical and such that $(d\omega)_p\in \mathcal{J}_p\cdot \Omega^2_{X,p}$.
Then there is a formal 1-form $\eta\in\widehat{\Omega^1_{X,p}}$ such that $d\omega=\omega\wedge\eta$.
\end{theorem}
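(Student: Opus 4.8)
The plan is to bootstrap from the already-established codimension-2 case, namely Corollary \ref{corDiv}, by slicing $X$ with a generic 2-plane through $p$ and then gluing the locally-obtained 1-forms into a formal solution. More precisely, working in formal (or analytic) local coordinates $x_1,\dots,x_N$ centered at $p$, I would first argue that it suffices to produce, for a sufficiently general linear projection $\pi\colon \widehat{X}_p \to \widehat{\CC^2_0}$ (equivalently, a general 2-dimensional smooth slice $Y \hookrightarrow X$ through $p$), a 1-form $\eta_Y$ on $Y$ with $d\omega|_Y = \omega|_Y \wedge \eta_Y$; such $\eta_Y$ exists by Corollary \ref{corDiv} provided the hypotheses $\theta := d\omega|_Y \in \mathcal{J}(\omega|_Y)\cdot \Omega^2_Y \cap \ZZ^2(\omega|_Y)$ are checked, which follow from the assumptions $(d\omega)_p \in \mathcal{J}_p\cdot\Omega^2_{X,p}$ and integrability by restriction, together with a Bertini-type argument guaranteeing that a generic slice keeps $\sing(\omega|_Y)$ of the expected codimension and keeps $\mathcal{J}(\omega|_Y)$ radical.

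The key structural step is then to upgrade ``$\eta$ exists on each generic slice'' to ``$\eta$ exists on the formal neighborhood.'' I would phrase this via the exact sequence
\[
0 \to \Omega^1_{X}\big/\OO_X\cdot(\omega) \to \ZZ^2(\omega) \to H^2(\omega) \to 0
\]
established in the diagram above: the obstruction to dividing $d\omega$ by $\omega$ is precisely the class $[d\omega] \in H^2(\omega)_p$, and I must show this class vanishes in $\widehat{\OO_{X,p}}$. By Proposition \ref{sup}, $H^2(\omega)$ is supported on $\sing(\omega)$, and by the hypothesis $\mathcal{J}(\omega)_p$ is radical, so $\sing(\omega)$ is reduced at $p$. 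The idea is that a module supported on a reduced scheme which, after restriction to a general complete-intersection slice cutting down to dimension $2$ (where the scheme becomes finitely many reduced points), has vanishing class — this forces the class to vanish. Concretely, I would use that for $\theta \in \ZZ^2(\omega)$ with $\theta_p \in \mathcal{J}_p\cdot\Omega^2_{X,p}$, Proposition \ref{propDiv} reduces the problem to verifying $D(\omega)_p \subseteq \mathcal{J}_p\cdot T X_p$; and this inclusion can be tested on reduced points after slicing, exactly as in the proof of Corollary \ref{corDiv}, because on a reduced scheme an inclusion of ideal sheaves may be checked at the (closed) points of the support. Equivalently: the cokernel of $D(\omega)_p \hookrightarrow \mathcal{J}_p\cdot TX_p$, if nonzero, would have support meeting a generic 2-slice in a reduced point at which the slice statement Corollary \ref{corDiv} already gives the inclusion — a contradiction.

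I expect the main obstacle to be precisely this passage from slices to the full formal neighborhood: controlling how $\sing(\omega)$, the module $H^2(\omega)$, and the inclusion $D(\omega) \subseteq \mathcal{J}\cdot TX$ behave under restriction to a generic 2-plane, and making sure ``generic'' slices exist formally at $p$ (where one cannot literally invoke classical Bertini over a field, but must argue with the associated primes of $\mathcal{J}_p$ and a general choice of $N-2$ linear forms in $\widehat{\OO_{X,p}}$). The reducedness hypothesis is what makes this work: it guarantees $\mathcal{J}_p = \bigcap \mathfrak{p}_i$ with all $\mathfrak{p}_i$ prime, so that a general linear section meets each component transversally in a reduced subscheme, eventually in reduced points where Corollary \ref{corDiv} applies directly. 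Once the inclusion $D(\omega)_p \subseteq \mathcal{J}_p\cdot TX_p$ is secured, Proposition \ref{propDiv} (or rather its formal-local analogue, obtained by completing and noting $\widehat{H^2(\omega)_p}$ is the completion of $H^2(\omega)_p$) yields the desired $\eta \in \widehat{\Omega^1_{X,p}}$ with $d\omega = \omega\wedge\eta$, completing the proof. A routine point to dispatch along the way is that $d\omega$ is indeed a $2$-cycle of the Koszul complex, which is immediate from integrability $\omega\wedge d\omega = 0$.
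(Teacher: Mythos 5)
There is a genuine gap, and it sits exactly where you predict it will: the passage from 2-dimensional slices back to the ambient space. Two distinct things go wrong. First, knowing that $d\omega|_Y=\omega|_Y\wedge\eta_Y$ on a (generic) 2-plane $Y$ through $p$ only controls the single component of $d\omega$ lying in $\Lambda^2$ of the conormal directions of $Y$; it gives no information about the other components of $d\omega$, so no amount of varying $Y$ assembles these $\eta_Y$ into a global relation $d\omega=\omega\wedge\eta$. Second, and more seriously, your fallback --- reducing via Proposition \ref{propDiv} to the inclusion $D(\omega)_p\subseteq \mathcal{J}_p\cdot TX_p$ and ``testing it on reduced points after slicing'' --- cannot work because that inclusion is simply \emph{false} in general: for instance for $\omega=y\,dz-z\,dy$ on $\CC^3$ the field $\partial/\partial x$ lies in $D(\omega)$ and vanishes nowhere, while $\sing(\omega)=\{y=z=0\}$ is reduced. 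Moreover $D(\omega|_Y)$ is not the restriction of $D(\omega)$ (tangent fields of the ambient foliation need not be tangent to $Y$ at all), so Corollary \ref{corDiv} applied on the slice says nothing about the ambient inclusion; the ``cokernel'' argument you sketch conflates these two sheaves.

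The paper's proof confronts precisely the case $D(\omega)\nsubseteq\mathcal{J}\cdot TX$ head-on, and this is the idea missing from your proposal. At a generic point of $\sing(\omega)$ where the inclusion fails, radicality of $\mathcal{J}$ produces a tangent field $\xi\in D(\omega)$ with $\xi\otimes k(\mathfrak{p})\neq 0$, and (after checking that otherwise one is already done via $\Phi$) one may normalize so that $i_\xi\omega=0$ and $i_\xi d\omega=\omega$. Straightening $\xi$ formally to $\partial/\partial x$ and slicing by a hyperplane $H$ transversal to $\xi$ --- a codimension-one slice adapted to $\xi$, not a generic 2-plane --- yields the product normal form $\omega=e^x\omega_H$, which is exactly what lets a division $d\omega_H=\omega_H\wedge\eta'$ on $H$ lift to $d\omega=\omega\wedge(-dx+\eta')$ upstairs. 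The induction therefore descends one dimension at a time, with Corollary \ref{corDiv} as the base case, and the argument closes by observing that the class of $d\omega$ in $H^2(\omega)$ then vanishes at every generic point of $\sing(\omega)$, hence vanishes outright since $\mathrm{supp}(H^2(\omega))$ equals the reduced scheme $\sing(\omega)$ and admits no embedded components. Your outline correctly identifies the base case and the relevance of $\Phi$, Proposition \ref{propDiv} and the support of $H^2(\omega)$, but without the tangent-field normal form there is no mechanism to lift divisibility from a slice, and the proof does not go through as written.
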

\begin{proof}
The proof proceeds by induction on the dimension of the ambient space.
If $\dim X=2$ the theorem follows by Corollary \ref{corDiv}.
If $\dim X>2$ we consider a \emph{generic} point $\mathfrak{p}\in \sing(\omega)$, then there are two alternatives:

\smallskip
\par If locally around $\mathfrak{p}$ we have $D(\omega)_\mathfrak{p}\subseteq \mathcal{J}_\mathfrak{p}\cdot TX$, then the theorem follows from Proposition \ref{propDiv}.

\smallskip
\par If $D(\omega)\nsubseteq \mathcal{J}_\mathfrak{p}\cdot TX$ then, as $\mathcal{J}_\mathfrak{p}$ is radical, there must be a vector field $\xi\in D(\omega)$ such that $\xi\otimes k(\mathfrak{p})\neq 0$.
If for every such vector field we have $\Phi(d\omega)(\xi)\in \mathcal{J}_\mathfrak{p}$, then we would have $\Phi(d\omega)\in Hom(D(\omega),\mathcal{J}_\mathfrak{p})$ and we would be set.
So we may suppose there is $\xi\in D(\omega)$ such that $\xi\otimes k(\mathfrak{p})\neq 0$ and such that $i_\xi d\omega=f\omega$ with $f\neq 0$ in $k(\mathfrak{p})$.
We can now take a \emph{closed} point $p$ specializing $\mathfrak{p}$ such that $f(p)\neq 0$.
Dividing by $f$ we get a vector field such that
\[
\xi|_p\neq 0,\qquad i_\xi\omega=0,\qquad L_{\xi}\omega=i_\xi d\omega=\omega.
\]
We now take a hyperplane $H$ transversal to $\xi$ at $p$.
And take $\omega_H$ to be the restriction of $\omega$ to $H$.
We can take a formal system of coordinates around $p$, $(x, y_1,\ldots,y_{n-1})$ with $\frac{\partial}{\partial x}=\widehat{\xi}$ and the $y$'s being formal coordinates of $H$ around $p$.
In this coordinate system, as $i_{\frac{\partial}{\partial x}}\omega=0$ we have
\[
\omega = \sum_{i=1}^{n-1} g_i(x,y_1,\ldots,y_{n-1}) dy_i,
\]
as we also have $L_{\frac{\partial}{\partial x}}\omega = \omega$ then the coefficients $g_i$ verify the differential equation $\frac{\partial g_i}{\partial x}=g_i$.
So, for every $i$ there is a series $h_i(y_1,\ldots,y_{n-1})$ such that $g_i(x,y_1,\ldots,y_{n-1})= e^x h_i$.
In other words, for these coordinates we have
\[
\omega=e^x \omega_H.
\]
This implies that if $\mathcal{J}_H$ is ideal of the singular scheme of $\omega_H$ then $\widehat{\mathcal{J}_p}=\mathcal{J}_H[[x]]$.
So $\mathcal{J}_H$ is also radical and moreover $d\omega_H\in \mathcal{J}_H \cdot\widehat{\Omega^2_{S,p}}$.
Then by induction there is a formal $1$-form $\eta'$ such that $d\omega_H=\omega_H\wedge\eta'$.
Now we have, in a formal neighborhood around $p$:
\begin{align*}
d\omega&=d(e^x\omega_H)=e^xdx\wedge \omega_H+e^xd\omega_H=\\
&=e^xdx\wedge \omega_H+e^x\omega_H\wedge\eta'=e^x\omega_H\wedge(-dx+\eta')=\\
&= \omega\wedge\eta,
\end{align*}
where $\eta=-dx+\eta'$.

So far we have proved that, around a generic point $p\in \sing(\omega)$, the class $\overline{d(\omega)}_p$ of $d\omega_p$ in $H^2(\omega)$ is zero.
Then the support of $\overline{d(\omega)}$ must be a proper closed subscheme of the singular locus of $\omega$.
This implies that the support of $H^2(\omega)$, and therefore that of $\mathcal{E}xt^1_X(N(\omega),\OO_X)$, must have as an irreducible component a proper closed set of $\sing(\omega)$.
Again by \cite[Remark 7.7, p.~178]{quallbrunn}, we have the equality $\mathrm{supp}(\mathcal{E}xt^1_X(N(\omega),\OO_X))=\sing(\omega)$, and $\sing(\omega)$ is reduced by hypothesis and therefore have no embedded components.
Hence $\overline{d(\omega)}\in H^2(\omega)$ must be zero, which implies the existence of $\eta$ as in the statement of the theorem.

\end{proof}

\section{Graded projective unfoldings}

Throughout this section we will consider $\omega\in\FF^1(\PP^n,e)$.
First we will define the space of \emph{graded projective unfoldings}, $\UUb(\omega)$, and its related objects which are the ideal of (graded projective) unfoldings $I(\omega)$ and the complex of $S$-modules $R^\bullet(\omega)$.
We will use $I(\omega)$ in Section \ref{teo} to state our main results.

We refer the reader to \cite{moli} for a detailed exposition regarding this subject.

\bigskip

\begin{definition}\label{unf-unf} We define the $S$-module of {\it graded projective unfoldings} of $\omega$ as
\[
\UUb(\omega) = \left\{(h,\eta)\in S\times \Omega^1_{S}: \ L_R(h)\ d\omega = L_R(\omega)\wedge(\eta - dh) \right\}\big/ S.(0,\omega).
\]
For $a\in\NN$, the homogeneous component of degree $a$ can be written as
\[
\UUb(\omega)(a) = \left\{(h,\eta)\in (S\times \Omega^1_{S})(a): \ a\ h\ d\omega = e \ \omega\wedge(\eta - dh) \right\}\big/ S(a-e).(0,\omega).
\]

For $(h,\eta)\in \UUb(\omega)(a)$ and $f\in S(b)$, the graded {\it $S$-module structure} is defined {\it via} the formula
\begin{equation*}
f\cdot (h,\eta) := \left(fh,\ \tfrac{a+b}{a}\  f\eta +\tfrac{1}{a}\ (a\ h\ df- b\ f\ dh)\ \right)\in\UUb(\omega)(a+b).
\end{equation*}
\end{definition}

\medskip

\begin{definition} We define the \emph{isomorphism classes} of graded projective unfoldings, as the quotient $\overline{\UUb}(\omega):=\UUb(\omega)/C_{\UUb}(\omega)$. For $a\in\NN$,  the homogeneous component of degree $a$ of $C_{\UUb}(\omega)$, is defined as
\[
C_{\UUb}(\omega)(a) = \left\{\left(i_X\omega,\frac{1}{e}\left(a\ i_Xd\omega + e\  di_X\omega\right)\right): \ X\in T_S(a-e) \right\}\Big/ S(a-e).(0,\omega).
\]
\end{definition}

\medskip

Let us consider the projection to the first coordinate
\[
\xymatrix@R=0pt{
\UUb(\omega) \ar[r]^{\pi_1} & S.
}
\]

\begin{definition} We define the graded ideals of $S$ associated to $\omega$ as
\begin{align*}
I(\omega) &:= \pi_1(\UUb(\omega)) = \left\{ h\in S:\  h\ d\omega = \omega\wedge\eta\text{ for some } \eta\in\Omega^1_S\right\}\\
J(\omega) &:= \pi_1(C_{\UUb}(\omega)) = \left\{ i_X(\omega)\in S:\ X\in T_S \right\}.
\end{align*}
We will also denote them $I=I(\omega)$ and $J=J(\omega)$ if no confusion arises.
\end{definition}

\begin{proposition}\label{I/J}
The projection $\pi_1:\UUb(\omega)\xrightarrow{} S$ induces the isomorphisms
\[
\overline{\UUb}(\omega)\simeq I(\omega)/J(\omega)\qquad\text{and}\qquad \UUb(\omega)\simeq I(\omega).
 \]
\end{proposition}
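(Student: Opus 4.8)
The plan is to show that the projection $\pi_1$ is injective on $\UUb(\omega)$, which immediately gives $\UUb(\omega)\simeq I(\omega)$ since $I(\omega)=\pi_1(\UUb(\omega))$ by definition; the second isomorphism $\overline{\UUb}(\omega)\simeq I(\omega)/J(\omega)$ then follows formally, because $\pi_1$ carries $C_{\UUb}(\omega)$ onto $J(\omega)$ by definition and an injective map descends to an injective map on quotients whose image is the quotient of the images. So the whole content is the injectivity of $\pi_1$ on the quotient $\UUb(\omega)$, i.e. on pairs $(h,\eta)$ modulo $S\cdot(0,\omega)$.

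First I would unwind what injectivity means concretely. Suppose $(h,\eta)\in\UUb(\omega)$ with $\pi_1(h,\eta)=h=0$. Working in a fixed degree $a$, the defining relation $a\,h\,d\omega = e\,\omega\wedge(\eta-dh)$ becomes $\omega\wedge\eta = 0$, i.e. $\eta\in\ZZ^1(\omega)$ is a $1$-cycle of the Koszul complex $Kosz^\bullet(\omega)$. The key point is that $H^1(\omega)=0$: this holds because $\omega$ defines a foliation, hence $\codim(\sing(\omega))\geq 2$, and Theorem \ref{koszul-teo} gives the vanishing of $H^1(\omega)$. Therefore every $1$-cycle is a $1$-boundary, so there exists $f\in S$ with $\eta = f\omega$. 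But then $(h,\eta) = (0,f\omega) = f\cdot(0,\omega)$ (one should check that the module structure from Definition \ref{unf-unf} indeed sends $f\cdot(0,\omega)$ to $(0,f\omega)$, which it does since the $\eta$-component formula with $h=0$ reduces to $\frac{a+b}{a}\cdot 0 + \frac{1}{a}(0 - b\,f\,d0)$... one must be slightly careful with degrees, but with $h=0$ the form $(0,\omega)$ sits in degree $e$ and scaling by $f\in S(b)$ lands in degree $e+b$ with $\eta$-component $\frac{b+e}{e}f\omega$, a nonzero scalar multiple of $f\omega$, which is still in $S\cdot(0,\omega)$). Hence $(h,\eta)$ is zero in the quotient $\UUb(\omega)$, proving $\pi_1$ is injective.

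The main obstacle, and the step deserving the most care, is bookkeeping the grading and the twisted $S$-module structure: the module action of $f$ on $(0,\omega)$ does not literally produce $(0,f\omega)$ but a scalar multiple of it, and one must confirm this does not obstruct the conclusion — it does not, because $S\cdot(0,\omega)$ is exactly the $S$-submodule generated by $(0,\omega)$ under this twisted action, so any element of the form $(0, c f\omega)$ with $c$ a nonzero scalar lies in it. A secondary point is to verify that $\pi_1$ is well-defined on the quotient in the first place, i.e. that $\pi_1$ kills $S\cdot(0,\omega)$; this is immediate since the first coordinate of $f\cdot(0,\omega)$ is $f\cdot 0 = 0$. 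Once injectivity is in hand, surjectivity onto $I(\omega)$ is by definition, and passing to the quotient by $C_{\UUb}(\omega)$ (whose $\pi_1$-image is $J(\omega)$ by definition) yields $\overline{\UUb}(\omega)\simeq I(\omega)/J(\omega)$ with no further work.
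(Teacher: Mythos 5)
Your proposal is correct and follows essentially the same route as the paper: both arguments reduce to the injectivity of $\pi_1$ modulo $S\cdot(0,\omega)$, which comes down to the fact that $\omega\wedge\eta=0$ forces $\eta=f\omega$ because $H^1(\omega)=0$ (equivalently $\codim(\sing(\omega))\geq 2$, Theorem \ref{koszul-teo}); the paper phrases this as two lifts of the same $h$ differing by a multiple of $\omega$, while you phrase it as the kernel of $\pi_1$ being contained in $S\cdot(0,\omega)$. Your extra bookkeeping of the twisted module action on $(0,\omega)$ is a harmless refinement of the same argument.
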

\begin{proof}
Let us consider $(h,\eta_1),(h,\eta_2)\in (S\times\Omega^1_S)(a)$ such that
\begin{align*}
a\ hd\omega &= e\ \omega\wedge(\eta_1-dh)\\
a\ hd\omega &= e\ \omega\wedge(\eta_2-dh).
\end{align*}
Then $\omega\wedge (\eta_1-\eta_2) = 0$ and there must exist $f\in S(a-e)$ such that $\eta_1-\eta_2 = f\omega$. This way the classes of $(h,\eta_1)$ and $(h,\eta_2)$ coincide in $\UUb(\omega)$, which shows that $\UUb(\omega)\simeq I(\omega)$. By doing the same for elements of the form $\left(i_X\omega,\frac{a\ i_Xd\omega + e\  di_X\omega}{e}\right)$ we can see the isomorphism $C_\UUb(\omega)\simeq J(\omega)$.

Putting together both arguments we have that $\overline{\UUb}(\omega)\simeq I(\omega)/J(\omega)$.

\end{proof}

\begin{remark}\label{1notinI}
Notice that $1\not\in I$, since the class of $d\omega$ in $H^2(\omega)$ is not zero as we pointed out in Section \ref{foliations}.
\end{remark}
\

Recall from Section \ref{foliations}, that we denote by $\IIc(\eta)$ the ideal of polynomial coefficients of the differential form $\eta$.
\begin{proposition}\label{inc1}
We have the following relations
\[
\IIc(\omega) = J(\omega)\subseteq I(\omega)\ .
\]
\end{proposition}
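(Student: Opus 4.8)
The plan is to establish the three claims $\IIc(\omega)\subseteq J(\omega)$, $J(\omega)\subseteq \IIc(\omega)$, and $J(\omega)\subseteq I(\omega)$ separately, with the last one being essentially immediate from Proposition \ref{I/J}. The key observation throughout is that contraction of $\omega$ with a vector field $X$ reads off linear combinations of the coefficients $A_i$: writing $\omega=\sum_i A_i\,dx_i$ and $X=\sum_i X_i\tfrac{\partial}{\partial x_i}$ we have $i_X\omega=\sum_i X_i A_i$. This single identity will drive the two inclusions defining the equality $\IIc(\omega)=J(\omega)$.

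First I would prove $J(\omega)\subseteq \IIc(\omega)$: any generator of $J(\omega)$ is of the form $i_X\omega=\sum_i X_i A_i$ for some $X\in T_S$, and since each $A_i\in\IIc(\omega)$ this is manifestly an element of the ideal $\IIc(\omega)=(A_0,\ldots,A_n)$. Conversely, for $\IIc(\omega)\subseteq J(\omega)$, I would note that each generator $A_j$ of $\IIc(\omega)$ equals $i_{\partial/\partial x_j}\omega$, which exhibits $A_j$ as an element of $J(\omega)$ coming from the coordinate vector field $X=\tfrac{\partial}{\partial x_j}\in T_S$; since $J(\omega)$ is an ideal, the full ideal $\IIc(\omega)$ it generates is contained in $J(\omega)$. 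One mild point to check is that $\tfrac{\partial}{\partial x_j}$ is an admissible element of $T_S$ in the relevant degree, i.e. that the degree bookkeeping in the definition of $J(\omega)=\pi_1(C_{\UUb}(\omega))$ allows these contractions — but $T_S$ here is the full module of polynomial vector fields with no integrability constraint, so this is just a matter of matching graded pieces and presents no real obstacle. Combining both inclusions gives $\IIc(\omega)=J(\omega)$.

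Finally, $J(\omega)\subseteq I(\omega)$ is immediate from the definitions: $J(\omega)=\pi_1(C_{\UUb}(\omega))$ and $I(\omega)=\pi_1(\UUb(\omega))$, and $C_{\UUb}(\omega)$ is by construction a submodule of $\UUb(\omega)$ (it is the submodule modded out to form $\overline{\UUb}(\omega)$ in Proposition \ref{I/J}), so its image under $\pi_1$ lies inside the image of $\UUb(\omega)$. Alternatively one can argue directly: for $X\in T_S$, Cartan's formula gives $L_X\omega = i_X d\omega + d\,i_X\omega$, and using $\omega\wedge d\omega=0$ one verifies that $h:=i_X\omega$ satisfies $h\,d\omega=\omega\wedge\eta$ for a suitable $\eta$ built from $i_Xd\omega$ and $di_X\omega$, exactly as recorded in the formula defining $C_{\UUb}(\omega)(a)$; hence $h\in I(\omega)$. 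I expect no substantive obstacle here — the only thing requiring a little care is confirming that the element $\eta$ appearing implicitly in $C_{\UUb}(\omega)$ genuinely witnesses membership in $I(\omega)$, which is a direct unwinding of Definition \ref{unf-unf} and Proposition \ref{I/J}.
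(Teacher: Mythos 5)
Your proposal is correct and follows essentially the same route as the paper: the equality $\IIc(\omega)=J(\omega)$ is obtained by contracting $\omega$ with the coordinate fields $\partial/\partial x_j$ (plus the trivial reverse inclusion $i_X\omega=\sum_i X_iA_i\in(A_0,\ldots,A_n)$), and the inclusion $J(\omega)\subseteq I(\omega)$ comes from contracting the integrability condition, since $0=i_X(\omega\wedge d\omega)$ gives $i_X(\omega)\,d\omega=\omega\wedge i_X(d\omega)$, exhibiting $i_X\omega$ as an element of $I(\omega)$. Your extra remarks (degree bookkeeping in $T_S$, and the alternative argument via $C_{\UUb}(\omega)\subseteq\UUb(\omega)$) are sound but not needed beyond what the paper records.
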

\begin{proof}
The equality can be easily verified by contracting $\omega$ with the vector fields $\partial/\partial x_i$, $i=0,\ldots,n$.
The inclusion follows from the following fact,
\[
\omega\wedge d\omega=0\Longrightarrow i_X(\omega)d\omega=\omega\wedge (-i_X(d\omega))
\Longrightarrow i_X(\omega)\in I(\omega).
\]

\end{proof}

\

The equivalence between the conditions $\omega\wedge d\omega = 0$ and $d\omega\wedge d\omega = 0$, allows us to define the following complex:

\begin{definition} We define the graded complex  $R^\bullet(\omega)$ of $S$-modules associated to $\omega$, as
\begin{equation*}
\xymatrix@C=30pt{
R^\bullet(\omega): & T_S \ar[r]^-{d\omega\wedge} & \Omega^1_{S} \ar[r]^-{d\omega\wedge} & \Omega^3_{S} \ar[r]^-{d\omega\wedge} & \ldots &
}
\end{equation*}
where $R^s(\omega)=\Omega^{2s-1}_S$ for $s\geq 0$ and the 0-th differential is defined as $d\omega\wedge X:= i_Xd\omega$.
\end{definition}

\

As usual, let us denote by $\ZZ^k(-)$ the cycles of degree $k$ of the given complex. We recall from \cite[Proposition 3.1.5, p.~13 and Theorem 3.2.2, p.~14]{moli}, the following results.

\begin{theorem}
\label{unf-h1} Let $\omega\in\FF^1(\PP^n,e)$, then we have $S$-module isomorphisms
\begin{align*}
\ZZ^1(R^\bullet(\omega))\big/ S.\omega &\ \cong \ \UUb(\omega)\ \cong \ I(\omega)\\
H^1(R^\bullet(\omega)) &\ \cong \ \overline{\UUb}(\omega)\ \cong \ I(\omega)\big/J(\omega).
\end{align*}
\end{theorem}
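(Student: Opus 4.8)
The plan is to establish the two isomorphisms by exhibiting explicit maps built from the data already in hand, and then to pass to homology. For the first isomorphism, I would start from the definition of $\ZZ^1(R^\bullet(\omega))$: a cycle of degree $1$ is a $1$-form $\eta\in\Omega^1_S$ with $d\omega\wedge\eta=0$. Given such an $\eta$, the plan is to produce an element $h\in S$ (equivalently, an element of $\UUb(\omega)$) as follows. Since $\omega\wedge d\omega=0$, the integrability gives $d\omega$ as a $2$-cycle in $Kosz^\bullet(\omega)$, and the relation $d\omega\wedge\eta=0$ should, via a degree comparison on polynomial coefficients together with $H^1(\omega)=0$ (available since $\codim\sing(\omega)\ge 2$, Theorem \ref{koszul-teo}), force $\eta$ to satisfy a relation of the shape $e\,\omega\wedge\eta = a\, h\, d\omega$ for a unique $h\in S$ of the appropriate degree. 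Concretely, I expect one contracts the identity $d\omega\wedge\eta = 0$ with the radial field $R$, uses $i_R\omega=0$ and the formula $i_R d\omega = L_R\omega = e\,\omega$ (descent to projective space), to get $e\,\omega\wedge\eta = -(i_R d\omega)\wedge\eta$ rearranged into $\omega\wedge(\text{something}) \propto (i_R\eta)\, d\omega$; tracking constants this pins down $h$ in terms of $i_R\eta$. This assignment $\eta\mapsto h$ is visibly $S$-linear up to the scaling bookkeeping in the module structure of Definition \ref{unf-unf}, it kills $S\cdot\omega$ (since $\omega\wedge\omega=0$), and by Proposition \ref{I/J} its image is exactly $I(\omega)=\pi_1(\UUb(\omega))$; injectivity on $\ZZ^1(R^\bullet(\omega))/S.\omega$ follows because two $1$-forms with the same associated $h$ differ by an element of $\ker(\omega\wedge -)=S\cdot\omega$ (again using $H^1(\omega)=0$), which is precisely the argument already run in the proof of Proposition \ref{I/J}. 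This gives $\ZZ^1(R^\bullet(\omega))/S.\omega\cong\UUb(\omega)\cong I(\omega)$.

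For the second isomorphism I would compute $H^1(R^\bullet(\omega)) = \ZZ^1(R^\bullet(\omega))/\operatorname{im}(d\omega\wedge: T_S\to\Omega^1_S)$, where the image consists of the forms $i_X d\omega$ for $X\in T_S$. Under the isomorphism of the previous paragraph I must identify where these coboundaries go. By Proposition \ref{inc1}, for $X\in T_S$ we have $i_X(\omega)\,d\omega = \omega\wedge(-i_X d\omega)$, which says the pair $\bigl(i_X\omega,\ \tfrac1e(a\,i_X d\omega + e\,d i_X\omega)\bigr)$ is exactly the image in $\UUb(\omega)$ of the cycle $\eta = -i_X d\omega$ under the correspondence $\eta\mapsto h$; so the coboundary submodule of $\ZZ^1(R^\bullet(\omega))$ maps onto $C_{\UUb}(\omega)$, hence onto $J(\omega)=\pi_1(C_{\UUb}(\omega))$. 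Therefore the first isomorphism descends to $H^1(R^\bullet(\omega))\cong\UUb(\omega)/C_{\UUb}(\omega) = \overline{\UUb}(\omega)\cong I(\omega)/J(\omega)$, the last identification being Proposition \ref{I/J}. The remaining point is that $d\omega\wedge(i_X d\omega)=0$, i.e.\ that coboundaries are indeed cycles — this is just $d\omega\wedge d\omega=0$ contracted with $X$, using that $d\omega\wedge d\omega=0$ is equivalent to $\omega\wedge d\omega=0$ as recorded before the definition of $R^\bullet(\omega)$.

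The main obstacle I anticipate is the bookkeeping of the scaling constants: the module structure on $\UUb(\omega)$ in Definition \ref{unf-unf} is twisted by degree-dependent factors $\tfrac{a+b}{a}$ and $\tfrac1a(a\,h\,df - b\,f\,dh)$, and the homogeneous component $\UUb(\omega)(a)$ carries the relation $a\,h\,d\omega = e\,\omega\wedge(\eta-dh)$ with its own $a$ and $e$. So the map $\eta\mapsto h$ is not literally $S$-linear for the naive structure but is an isomorphism of graded $S$-modules for the twisted one, and verifying compatibility amounts to a careful but routine check that the Leibniz-type correction term in the $S$-action matches the correction $-dh$ appearing inside $\omega\wedge(\eta-dh)$. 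Everything else — exactness bookkeeping, the use of $H^1(\omega)=0$, and the identification of images with $I(\omega)$ and $J(\omega)$ — is a direct consequence of Theorem \ref{koszul-teo}, Proposition \ref{I/J} and Proposition \ref{inc1}. Since the statement is quoted from \cite[Proposition 3.1.5 and Theorem 3.2.2]{moli}, I would in the write-up either reproduce this argument or simply cite it, depending on how self-contained the section is meant to be.
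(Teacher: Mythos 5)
Your proposal is correct and follows essentially the same route as the proof the paper relies on: the paper itself only cites \cite[Proposition 3.1.5 and Theorem 3.2.2]{moli}, but it explicitly records (eq. \ref{I-comp}) that the key mechanism is $i_R\ZZ^1(R^\bullet(\omega))=I$, which is exactly the contraction-with-the-radial-field map you construct, with kernel $S\cdot\omega$ via $H^1(\omega)=0$ and with the coboundaries $i_Xd\omega$ landing on $J(\omega)$. The only points you leave implicit --- surjectivity onto $I(\omega)$ (from $(h,\eta)\in\UUb(\omega)$ one checks $i_R\eta=0$, so $\eta-dh$ is a cycle contracting to a multiple of $h$) and the fact that $S\cdot\omega=\{i_{(f/e)R}d\omega\}$ already lies among the coboundaries --- are routine and consistent with your sketch.
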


\medskip

We will use the relation between $\ZZ^1(R^\bullet(\omega))$ and $I$, given by theorem above, to make effective computations of the ideal $I$. Specifically, the unfoldings ideal $I$ can be computed as
\begin{equation}\label{I-comp}
i_R\ZZ^1(R^\bullet(\omega))=I,
\end{equation}
where $i_R$ is the contraction with the radial field. Notice that this is the relation used in the proof of the previous theorem.

\section{The singular set and the unfoldings ideal}\label{teo}

Along this section we will redefine the varieties $\KK_{set}$ and $\LL_{set}$ as projective schemes $\KK$ and $\LL$, respectively. These varieties, together with the unfoldings ideal $I$ defined in the previous section, are our main objects of study. The computations regarding $\KK$ and $\LL$ are going to be done with its ideal of definitions, $K$ and $L$, respectively, regarded as graded ideals over $S$ (see \cite[Chapter II, 5, p.~108]{hart} for the relation between projective schemes and graded modules).

\medskip

In Definition \ref{generic} we state our genericity conditions on a codimension one foliation $\omega$ that we will carry throughout this section.

\medskip

In Theorem \ref{teo1} we prove that the radical of $I$ and the radical of $K$ coincide with mild generic assumptions.

\medskip

In Theorem \ref{teo3} we show that the Kupka scheme $\KK$ equals the Kupka set $\KK_{set}$ and they are non-empty, provided the ideal of $\sing(\omega)$ is radical, $J(\omega)=\sqrt{J(\omega)}$.

\subsection{Definitions}

\begin{definition} For $\omega\in\FF^1(\PP^n,e)$, we define the \emph{Kupka scheme} $\KK(\omega)$ as the scheme theoretic support of $d\omega$ at $\Omega^2_{S}\otimes_S S\big/J(\omega)$. Then, $\KK(\omega)=\proj(S/K(\omega))$ where $K(\omega)$ is the homogeneous ideal defined as
\[
K(\omega)=\ann(\overline{d\omega})+J(\omega)\subseteq S,\quad \overline{d\omega}\in \Omega^2_{S}\otimes_S S\big/J(\omega).
\]
We will denote $\KK=\KK(\omega)$ and $K=K(\omega)$ if no confusion arises.
\end{definition}

\bigskip

For the definition of the scheme $\LL$ we recall the notion of
\emph{ideal quotient} of two $S$-modules $M$ and $N$ as
\[
(N:M) := \left\{a\in S: a.M\subseteq N\right\},
\]
see \cite[Example 1.12, p.~8 and Corollary 3.15, p.~43]{MR0242802} for basic properties. In the case of two ideals $I,J\subseteq S$, we define the \emph{saturation} of $J$ with respect to $I$ as
\[
\left(J:I^\infty\right) := \bigcup_{d\geq 1} \big(J:I^d\big).
\]

Later, we will use the following simple fact.
\begin{lemma}\label{quotient}
Let $J$ be a radical ideal. Then $(J:I)$ is radical and
\[
(J:I) = \left(J:I^\infty\right) = (J:\sqrt{I}).
\]
\end{lemma}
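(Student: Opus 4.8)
The plan is to reduce everything to the statement that for a radical ideal $J$, the chain of ideal quotients $(J:I)\subseteq (J:I^2)\subseteq\cdots$ is constant, and that replacing $I$ by its radical $\sqrt{I}$ changes nothing. First I would show $(J:I)=(J:I^2)$: the inclusion $\subseteq$ is trivial, so take $a\in(J:I^2)$, i.e.\ $aI^2\subseteq J$, and I want $aI\subseteq J$. Pick any $f\in I$; then $af\cdot I\subseteq aI^2\subseteq J$, so in particular $(af)\cdot f = af^2\in J$, i.e.\ $(af)^2$ divides into... more precisely $a^2f^2\in J$ when... let me instead argue: $af^2\in J$ and $J$ radical, so $af\in\sqrt{(af)}\subseteq$ — the clean way is $(af)^2 = a^2f^2 = a\cdot(af^2)\in J$, hence $af\in J$ since $J$ is radical. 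As $f\in I$ was arbitrary, $aI\subseteq J$, giving $a\in(J:I)$. Iterating, $(J:I)=(J:I^d)$ for all $d\geq 1$, and therefore $(J:I)=(J:I^\infty)=\bigcup_d(J:I^d)$.

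Next I would handle $(J:I)=(J:\sqrt I)$. Again one inclusion is immediate since $I\subseteq\sqrt I$ forces $(J:\sqrt I)\subseteq(J:I)$. For the reverse, let $a\in(J:I)$ and $g\in\sqrt I$, so $g^m\in I$ for some $m$. Then $ag^m\in J$. Now I want to descend the exponent: since $J$ is radical and $(ag^m)$... the slick move is to note $(ag)^m = a^m g^m = a^{m-1}\cdot(a g^m)\in J$, hence $ag\in J$ because $J$ is radical. As $g\in\sqrt I$ was arbitrary, $a\sqrt I\subseteq J$, so $a\in(J:\sqrt I)$. Combined with the previous paragraph this yields the displayed triple equality.

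Finally, the radicality of $(J:I)$ itself: suppose $a^N\in(J:I)$, i.e.\ $a^N I\subseteq J$. For $f\in I$, $a^N f\in J$, and then $(af)^N = a^N f^N = a^N f\cdot f^{N-1}\in J$ (using $a^N f\in J$ and absorbing the rest of $S$), so $af\in J$ by radicality of $J$; hence $aI\subseteq J$ and $a\in(J:I)$. Thus $(J:I)$ is radical. I do not expect any real obstacle here — the whole argument is a repeated application of the elementary fact that if $J$ is radical and some power of an element lies in $J$ then the element does. The only mild care needed is to always multiply by the correct extra factors so that the expression whose power we take genuinely lands in $J$, and to remember that $I$, $J$ being homogeneous ideals in $S$ plays no role: the statement and proof are purely ring-theoretic.
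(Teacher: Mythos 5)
Your proof is correct and complete. The paper itself states this lemma as a ``simple fact'' and gives no proof at all, so there is no argument in the text to compare yours against; your write-up actually supplies the missing verification. The single device you use --- rewriting $(af)^m = a^{m-1}\cdot(af^m)$ so that the bracketed factor visibly lies in $J$ and then invoking radicality of $J$ --- is exactly the right elementary trick, and you apply it correctly in all three places (the stabilization $(J:I)=(J:I^d)$, the passage to $\sqrt I$, and the radicality of $(J:I)$ itself). One remark for perspective: since $S$ is Noetherian and $J$ is radical, one can also write $J=\bigcap_i \mathfrak{p}_i$ as a finite intersection of primes and observe that $(J:M)=\bigcap_i(\mathfrak{p}_i:M)$ with $(\mathfrak{p}_i:M)$ equal to $\mathfrak{p}_i$ or to $S$ according to whether $M\subseteq\mathfrak{p}_i$ or not; all three ideals in the statement then coincide with $\bigcap_{I\not\subseteq\mathfrak{p}_i}\mathfrak{p}_i$, which is manifestly radical. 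That route is shorter but uses primary/prime decomposition, whereas yours is purely ring-theoretic and works without any Noetherian hypothesis; either is acceptable here.
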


\

One could also define $K(\omega)$ as $K(\omega)=(J\cdot \Omega^2_S: d\omega)$. Then, given that $\Omega^2_S$ is free, we can also write
\begin{equation}\label{Kbis}
K(\omega)=(J:\IIc(d\omega)).
\end{equation}

\

\begin{definition} For $\omega\in\FF^1(\PP^n,e)$, we define the \emph{non-Kupka scheme} $\LL(\omega)$ as the projective scheme $\proj(S/L(\omega))$, where $L(\omega)$ is the homogeneous ideal defined by
\[
L(\omega)=(J(\omega):K(\omega)^{\infty}).
\]
We will write $\LL=\LL(\omega)$ and $L=L(\omega)$ if no confusion arises.
\end{definition}

\bigskip

\begin{remark}\label{reduced}
By Lemma \ref{quotient} and eq. \ref{Kbis} we immediately see that, if $J$ is radical then $K$ and $L$ are radical ideals.
\end{remark}

\bigskip

In the following example we show that the algebraic geometric approach is indeed necessary, since the reduced structure associated to the Kupka scheme $\KK$ differs from the reduced variety associated to $\KK_{set}$. In general, when $J$ is radical, both varieties will coincide, as we will show below.

\begin{example}
Consider the following integrable differential $1$-form $ydx+x^2dy$. Its projectivization in $\PP^2$ is given by
\[
\omega=yz^2dx+x^2zdy-(x^2y+xyz)dz,
\]
and its exterior differential is
\[
d\omega=(2xz-z^2)dx\wedge dy-(2xy+3yz)dx\wedge dz-(2x^2+xz)dy\wedge dz.
\]
In a set-theoretically setting, the singular set of $\omega$ and $d\omega$ are given by
\[
\sing(\omega) = \{(1:0:0),(0:1:0),(0:0:1)\}\qquad \text{and}\qquad \sing(d\omega) = \{(0:1:0)\},
\]
implying that the Kupka set is equal to $\{(1:0:0),(0:0:1)\}$.

The ideal defining $\sing(\omega)$ is
\[
J=(yz^2,x^2z,x^2y+xyz)
\]
giving multiplicities $1$, $4$ and $2$ to the points of $\sing(\omega)$ respectively. The Kupka scheme $\KK$ is defined by the ideal
\[
K=(yz^2,x^2z,2xy-yz).
\]
The support of $K$ is all $\sing(\omega)$ but with multiplicities $1$, $2$ and $2$ respectively.
\qed
\end{example}

\begin{lemma}\label{K=Kset} Let $\omega\in\FF^1(\PP^n,e)$ such that $J=\sqrt{J}$. Then
\[
\KK = \KK_{set}.
\]
\end{lemma}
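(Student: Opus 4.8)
The plan is to use the hypothesis $J=\sqrt J$ to show two things simultaneously: that $K$ is a radical ideal (so $\KK=\proj(S/K)$ is reduced), and that the closed set $V(K)$ coincides with the closure of the set of Kupka points, which by definition is $\KK_{set}$. The geometric heart of the matter is to identify $V(K)$ with the union of exactly those irreducible components of $\sing(\omega)_{set}$ on which $d\omega$ does not vanish identically, and to observe that this same union is $\KK_{set}$.

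First I would set up the set-theoretic dictionary. By Proposition~\ref{inc1} we have $J=\IIc(\omega)$, so $V(J)=\sing(\omega)_{set}$, while $V(\IIc(d\omega))=\sing(d\omega)_{set}=\{p\in\PP^n:\ d\omega(p)=0\}$; hence the Kupka points are precisely the points of $V(J)\setminus V(\IIc(d\omega))$, and $\KK_{set}=\overline{V(J)\setminus V(\IIc(d\omega))}$. Next, from eq.~\ref{Kbis} we have $K=(J:\IIc(d\omega))$, and since $J$ is radical, Lemma~\ref{quotient} gives $K=(J:\IIc(d\omega))=(J:\IIc(d\omega)^\infty)$ together with the fact that $K$ is radical; in particular $\KK$ is reduced. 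Now, being radical, $J$ is the intersection $J=P_1\cap\cdots\cap P_r$ of its minimal primes, with no embedded components, and the standard computation of the saturation against a prime decomposition yields $K=(J:\IIc(d\omega)^\infty)=\bigcap_{\IIc(d\omega)\not\subseteq P_i}P_i$, so that $V(K)=\bigcup_{\IIc(d\omega)\not\subseteq P_i}V(P_i)$. Finally I would check that, for each $i$, the condition $\IIc(d\omega)\not\subseteq P_i$ is equivalent to $d\omega$ not vanishing identically on the irreducible component $V(P_i)$, equivalently to the generic point of $V(P_i)$ — and hence a dense subset of $V(P_i)$ — being a Kupka point; taking the union over such $i$ and using that closure commutes with finite unions gives $\KK_{set}=\overline{V(J)\setminus V(\IIc(d\omega))}=\bigcup_{\IIc(d\omega)\not\subseteq P_i}V(P_i)=V(K)$. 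Combined with the reducedness of $\KK$, this gives $\KK=\KK_{set}$ as schemes.

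The argument is essentially formal once eq.~\ref{Kbis} and Lemma~\ref{quotient} are in hand, so I do not expect a serious obstacle; the one place that needs a careful line is the passage from the ideal identity $K=\bigcap_{\IIc(d\omega)\not\subseteq P_i}P_i$ to the geometric assertion that the \emph{discarded} components — those $V(P_i)$ with $\IIc(d\omega)\subseteq P_i$ — are exactly the ones entirely contained in $\sing(d\omega)_{set}$, hence contribute no Kupka point, so that deleting them does not shrink the closure $\KK_{set}$. This is precisely where $J=\sqrt J$ is used essentially: without it, the saturation $(J:\IIc(d\omega)^\infty)$ may differ from $(J:\IIc(d\omega))$ and $K$ may fail to be radical, and then $\KK$ and $\KK_{set}$ can genuinely differ, as the preceding example shows.
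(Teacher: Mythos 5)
Your proof is correct and follows essentially the same route as the paper, which simply writes $K=(J:\IIc(d\omega))=(J:\IIc(d\omega)^\infty)=\II(\KK_{set})$ using eq.~\ref{Kbis} and Lemma~\ref{quotient}; your decomposition of the radical ideal $J$ into minimal primes is just an explicit verification of the last equality, which the paper leaves implicit.
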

\begin{proof}
This follows immediately from the equalities
\[
K=(J:\IIc(d\omega))=(J:\IIc(d\omega)^\infty)=\II(\KK_{set}),
\]
where $\II(\KK_{set})$ denotes the (radical) ideal associated to $\KK_{set}$.
\end{proof}

\bigskip

We can now extend the chain of inclusions of Proposition \ref{inc1} by considering the ideal $K$.

\begin{proposition}\label{incl2}
Let $\omega\in\FF^1(\PP^n,e)$. Then, we have the following relations
\[
\IIc(\omega) = J\subseteq I\subseteq K\ .
\]
\end{proposition}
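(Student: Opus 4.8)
The statement to prove is $\IIc(\omega) = J \subseteq I \subseteq K$. The first two relations, $\IIc(\omega)=J$ and $J\subseteq I$, are already established in Proposition \ref{inc1}, so the only new content is the inclusion $I\subseteq K$. The plan is to take a homogeneous element $h\in I$ and show it lies in $K$. By the definition of $I(\omega)$, $h\in I$ means there is a 1-form $\eta\in\Omega^1_S$ with $h\,d\omega = \omega\wedge\eta$. I want to conclude that $h\in K = (J:\IIc(d\omega))$ (using the description in eq. \ref{Kbis}), i.e. that $h\cdot\IIc(d\omega)\subseteq J$, equivalently that $h\cdot d\omega$ has all its coefficients in $J$, i.e. $h\cdot\overline{d\omega}=0$ in $\Omega^2_S\otimes_S S/J$.

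The key computation is to reduce the relation $h\,d\omega=\omega\wedge\eta$ modulo $J$. Recall $J=\IIc(\omega)$ is exactly the ideal generated by the coefficients $A_0,\dots,A_n$ of $\omega$. Therefore, writing $\eta=\sum \eta_i dx_i$, the 2-form $\omega\wedge\eta=\sum_{i<j}(A_i\eta_j-A_j\eta_i)\,dx_i\wedge dx_j$ has every coefficient lying in $\IIc(\omega)=J$. Hence $\omega\wedge\eta \equiv 0$ in $\Omega^2_S\otimes_S S/J$, and consequently $h\,d\omega\equiv 0$ there as well. Since $\Omega^2_S$ is free over $S$, this says precisely that every coefficient of $h\,d\omega$ lies in $J$, i.e. $h\cdot B \in J$ for each coefficient $B$ of $d\omega$, i.e. $h\in (J:\IIc(d\omega)) = K$. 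This shows $I\subseteq K$ and completes the proof.

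This argument is essentially immediate once one unwinds the definitions, so I do not anticipate a genuine obstacle; the only point requiring a little care is the bookkeeping that $\IIc(\omega\wedge\eta)\subseteq \IIc(\omega)=J$, which follows because each coefficient of $\omega\wedge\eta$ is an $S$-linear combination of the $A_i$. One could alternatively phrase the whole thing intrinsically: the map "wedge with $\omega$" factors through $0$ after tensoring with $S/\IIc(\omega)$ because $\omega\otimes 1 = 0$ in $\Omega^1_S\otimes_S S/\IIc(\omega)$, so any $\theta=\omega\wedge\eta$ satisfies $\theta\otimes 1=0$; applying this to $\theta=h\,d\omega$ gives $h\,\overline{d\omega}=0$, which is the definition of $h\in K$. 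I would present the first, concrete version, as it meshes directly with the coefficient-level definitions of $\IIc$, $J$, and $K$ used throughout the section.
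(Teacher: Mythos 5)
Your proposal is correct and follows essentially the same route as the paper: from $h\,d\omega=\omega\wedge\eta$ one reads off $h\in (J\cdot\Omega^2_S:d\omega)=K$, the first two relations being Proposition \ref{inc1}. The only difference is that you spell out explicitly the (implicit in the paper) observation that every coefficient of $\omega\wedge\eta$ lies in $\IIc(\omega)=J$, which is a welcome clarification but not a different argument.
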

\begin{proof}
We only need to prove the last inclusion.
By definition, given $h\in I$
there exists a differential 1-form $\eta$ such that
\[
h\ d\omega = \omega\wedge \eta.
\]
Then, $h\in (J\cdot \Omega^2_S: d\omega)=K$.

\end{proof}

\subsection{Main results}

Let $\mathfrak{p}$ be a point in $\PP^n$, \emph{i.e.}, a homogeneous prime ideal in $S$ different from the \emph{irrelevant ideal} $(x_0.\ldots,x_n)$, and let $\omega$ be an integrable differential 1-form. We will denote by a subscript $\mathfrak{p}$ the localization at the point $\mathfrak{p}$ and with $\widehat{S}_\mathfrak{p}$ the completion of the local ring $S_\mathfrak{p}$ with respect to the maximal ideal defined by $\mathfrak{p}$.

\bigskip

\begin{definition} We say that $\mathfrak{p}\in\PP^n$ is a \emph{division point of $\omega$} if $1\in I(\omega)_\mathfrak{p}$.
\end{definition}

\bigskip

Recall from Section \ref{foliations} that we refer to the homology of the Koszul complex of $\omega$ as $H^p(\omega)$.

\begin{proposition}\label{division-point}
Let $\mathfrak{p}\in\PP^n$ and let $\omega\in\FF^1(\PP^n,e)$.
The class of $d\omega_\mathfrak{p}$ in $H^2(\omega_\mathfrak{p})$ is zero
if and only if $\mathfrak{p}$ is a division point of $\omega$.

Even more so, assume that formally around $\mathfrak{p}$,
$\widehat{\omega}_\mathfrak{p}\in\Omega^1_S\otimes_S \widehat{S}_\mathfrak{p}$ is equal to $fdg$, where $f,g\in\widehat{S}_\mathfrak{p}$ and $f$ a unit. Then $\mathfrak{p}$ is a division point.
\end{proposition}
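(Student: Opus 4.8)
The plan is to unwind the definition of $I(\omega)_\mathfrak{p}$ and identify the two stated conditions directly with the vanishing of $\overline{d\omega}_\mathfrak{p}\in H^2(\omega_\mathfrak{p})$. First I would observe that localizing the defining relation $h\,d\omega=\omega\wedge\eta$ at $\mathfrak{p}$ shows that $1\in I(\omega)_\mathfrak{p}$ exactly when there is some $\eta\in\Omega^1_{S,\mathfrak{p}}$ (equivalently, after clearing denominators, in $\Omega^1_S$ localized at $\mathfrak{p}$) with $d\omega=\omega\wedge\eta$ locally around $\mathfrak{p}$. But $d\omega$ is a $2$-cycle of the Koszul complex $Kosz^\bullet(\omega)$ by integrability, and the image of the Koszul differential $S\xrightarrow{\omega\wedge}\Omega^1_S$ in degree $2$ is precisely the set of forms $\omega\wedge\eta$; hence $d\omega=\omega\wedge\eta$ locally at $\mathfrak{p}$ is literally the statement that the class $\overline{d\omega}_\mathfrak{p}$ vanishes in $H^2(\omega)_\mathfrak{p}=H^2(\omega_\mathfrak{p})$ (using that Koszul homology commutes with localization since $\Omega^\bullet_S$ is free and localization is exact). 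This gives the first equivalence with essentially no work beyond bookkeeping; I expect the only subtlety to be the mild care needed in passing between $I(\omega)_\mathfrak{p}$ as the localization of the global ideal and the "local" division property — but since $\omega$ has free Koszul terms and $H^1(\omega_\mathfrak{p})=0$ (as $\codim\sing(\omega)\ge 2$), there is no obstruction to clearing the denominator of $\eta$ into a genuine section.

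For the "even more so" clause, suppose $\widehat{\omega}_\mathfrak{p}=f\,dg$ with $f$ a unit in $\widehat{S}_\mathfrak{p}$. Then $d\omega=df\wedge dg=\tfrac{1}{f}\,df\wedge(f\,dg)=\tfrac{1}{f}\,df\wedge\omega=\omega\wedge\eta$ with $\eta=-\tfrac{1}{f}\,df=-d\log f$, which is a well-defined element of $\widehat{\Omega^1_{X,\mathfrak{p}}}$ since $f$ is a unit. So $\overline{d\omega}_\mathfrak{p}=0$ in $\widehat{H^2(\omega_\mathfrak{p})}$, and since completion is faithfully flat over the Noetherian local ring $S_\mathfrak{p}$ — and again all Koszul terms are free, so $\widehat{H^2(\omega_\mathfrak{p})}=H^2(\omega_\mathfrak{p})\otimes_{S_\mathfrak{p}}\widehat{S}_\mathfrak{p}$ — vanishing after completion forces $\overline{d\omega}_\mathfrak{p}=0$ already in $H^2(\omega_\mathfrak{p})$. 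By the first part, $\mathfrak{p}$ is a division point.

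The only real obstacle I anticipate is the descent from a \emph{formal} factorization $\widehat{\omega}_\mathfrak{p}=fdg$ to an \emph{algebraic} $\eta$ witnessing $1\in I(\omega)_\mathfrak{p}$: a priori the division relation $d\omega=\omega\wedge\eta$ produced above only holds formally. The resolution is exactly the faithful-flatness argument just sketched: the statement "$d\omega\in\mathrm{im}(\omega\wedge\colon S\to\Omega^1_S)$ locally at $\mathfrak{p}$" is the vanishing of a specific element of the finitely generated $S_\mathfrak{p}$-module $H^2(\omega_\mathfrak{p})$, and an element of a f.g. module over a Noetherian local ring that dies after $\widehat{\ \cdot\ }$ dies already. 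Thus the formal hypothesis suffices to conclude the algebraic one, and no explicit unfolding needs to be constructed by hand.
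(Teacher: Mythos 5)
Your proof is correct and follows essentially the same route as the paper's: the first equivalence is the same unwinding of $I(\omega)_\mathfrak{p}=(\omega\wedge\Omega^1_{S_\mathfrak{p}}:d\omega)$ as the vanishing of the Koszul class, and the key computation $d\widehat{\omega}_\mathfrak{p}=\widehat{\omega}_\mathfrak{p}\wedge\bigl(-\tfrac{1}{f}\,df\bigr)$ is identical. The only (cosmetic) divergence is the descent from the formal to the algebraic statement: you invoke faithful flatness of $\widehat{S}_\mathfrak{p}$ over $S_\mathfrak{p}$ applied to the class of $d\omega$ in the finitely generated module $H^2(\omega_\mathfrak{p})$, whereas the paper applies Nakayama's lemma to the ideal $I(\omega)_\mathfrak{p}$ after noting $\widehat{I}(\omega)_\mathfrak{p}=\widehat{S}_\mathfrak{p}$; the two are interchangeable here.
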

\begin{proof}
If $\mathfrak{p}$ is a division point, then $1\in I(\omega)_\mathfrak{p}$, hence $d\omega_\mathfrak{p}=0\in H^2(\omega_\mathfrak{p})$.
Analogously, if $d\omega_\mathfrak{p}=0\in H^2(\omega_\mathfrak{p})$, then $1\in I(\omega)_\mathfrak{p}$.

Assume that $\widehat{\omega}_\mathfrak{p}=fdg$, where $f,g\in\widehat{S}_\mathfrak{p}$ and $f(\mathfrak{p})\neq 0$, then
\[
d\widehat{\omega}_\mathfrak{p}=df\wedge dg=-\frac{1}{f} f dg\wedge df=
\widehat{\omega}_\mathfrak{p}\wedge\big(-\frac{1}{f}df\big)=0\in H^2(\widehat{\omega}_\mathfrak{p}).
\]
Then, $1\in \widehat{I}(\omega)_\mathfrak{p}$, that is, $\widehat{S}_\mathfrak{p}=\widehat{I}(\omega)_\mathfrak{p}$. By Nakayama's Lemma, see \cite[p.~681]{MR1288523}, the inclusion $I(\omega)_\mathfrak{p}\subseteq S_\mathfrak{p}$ is an epimorphism, hence $I(\omega)_\mathfrak{p}=S_\mathfrak{p}$ and $1\in I(\omega)_\mathfrak{p}$.

\end{proof}

\

We now define a subset of the space of foliations on which we are going to state some of our results.

\begin{definition}\label{generic} We define the set $\UU\subseteq\FF^1(\PP^n,e)$ as
\[
\UU = \left\{\omega\in\FF^1(\PP^n,e): \ \forall \mathfrak{p}\not\in\KK(\omega),\,\mathfrak{p}\text{ is a division point of }\omega\right\}.
\]
\end{definition}

\medskip

\begin{remark}\label{rem-u}
A few remarks should be made regarding the set $\UU$:
\begin{enumerate}
\item By 
\cite[Th\'eor\`eme 0.1, p.~163]{m-f1} 
and Proposition \ref{division-point} above, $\UU$ contains the following open subset
\[
\UU' = \left\{\omega\in\FF^1(\PP^n,e):\ \codim(\sing(\omega))\geq 2\text{ and }\codim(\sing(d\omega))\geq 3\right\}.
\]
This open set is one of the usual generic conditions used in the literature.

\item\label{coro-division} If $\sing(\omega)$ is reduced then $\omega\in\UU$.
This follows from Theorem \ref{division} and the simple fact that, in $\PP^n$, the inclusion  $\sing(d\omega)\subseteq\sing(\omega)$ holds. This can be seen by contracting $d\omega$ with the radial field from where we get $i_Rd\omega = e \omega$.

Then, the following set is included in $\UU$,
\[
\UU'':=\{\omega\in\FF^1(\PP^n,e)\ :\ \sing(\omega)\text{ is reduced}\},
\]
A small variation of $\UU''$ is to ask $\sing(\omega)$ to be reduced in the affine cone $\CC^{n+1}$. This is equivalent to asking $J=\sqrt{J}$ and it is slightly stronger, because it removes the irrelevant ideal as an eventual immersed component of $J$. Since our approach is algebraic, we will use this condition as well. We remind that the condition of being reduced is an open condition, see \cite[Th\'eor\`eme (12.2.4), item (v), p.~183]{egaivIII}, then $\UU$ contains the open subset $\UU''$ of $\FF^1(\PP^n,e)$.

\item If $\omega$ admits a global integrating factor $F$ such that
the only components of $\sing(\omega)$ of codimension 2 that intersects $\{F=0\}$ are in $\KK$,
then $\omega\in\UU$. 
Indeed, any codimension 2 component of $\sing(\omega)$ intersects the hypersurface $\{F=0\}$.
Hence, every point $\mathfrak{p}\not\in\KK$ is a division point; see 
\cite[Th\'eor\`eme 0.1, p.~163]{m-f1} and Proposition \ref{division-point}.
This remark is useful for logarithmic foliations.
\end{enumerate}
\end{remark}

The hypothesis $\omega\in \UU$ will be our more general assumption from now on. It is the key to establish relations between the unfolding ideal $I$, the singular ideal $J$ and the Kupka ideal $K$, as the following theorem shows. It gives a global characterization of $\UU$.

\begin{theorem}\label{teo1}
Let $\omega\in\mathcal{U}\subseteq\FF^1(\PP^n,e)$. Then,
\[
\sqrt{I}=\sqrt{K}.
\]
Even more so, if $\sqrt{I}=\sqrt{K}$ then $\omega\in\UU$.
\end{theorem}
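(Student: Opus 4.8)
The plan is to prove the two halves separately. For the first (and main) assertion, that $\omega\in\UU$ implies $\sqrt{I}=\sqrt{K}$, I would work locally at each homogeneous prime $\mathfrak{p}\ne(x_0,\dots,x_n)$ and use the standard fact that a radical of an ideal is the intersection of the primes containing it, so it suffices to show that $I_\mathfrak{p}$ and $K_\mathfrak{p}$ have the same radical in $S_\mathfrak{p}$ for every such $\mathfrak{p}$. By Proposition \ref{incl2} we already have $I\subseteq K$, hence $\sqrt{I}\subseteq\sqrt{K}$ unconditionally, so the content is the reverse inclusion $\sqrt{K}\subseteq\sqrt{I}$.

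For that reverse inclusion I would argue by cases on whether $\mathfrak{p}\in\KK$. If $\mathfrak{p}\notin\KK$, then $K_\mathfrak{p}=S_\mathfrak{p}$ (since $\KK=\proj(S/K)$), and by the definition of $\UU$ such a $\mathfrak{p}$ is a division point, i.e.\ $1\in I_\mathfrak{p}$, so $I_\mathfrak{p}=S_\mathfrak{p}=K_\mathfrak{p}$ and the radicals trivially agree. If $\mathfrak{p}\in\KK$, then $\mathfrak{p}\supseteq K\supseteq I$ (again Proposition \ref{incl2}), so it suffices to show $\mathfrak{p}\supseteq\sqrt{I}$ forces nothing extra — more precisely, I must show every prime containing $I$ contains $K$ and conversely. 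The converse (prime $\supseteq K \Rightarrow \supseteq I$) is immediate from $I\subseteq K$. For the forward direction: suppose $\mathfrak{q}$ is a prime with $\mathfrak{q}\supseteq I$ but $\mathfrak{q}\not\supseteq K$; then $K_\mathfrak{q}=S_\mathfrak{q}$, so $\mathfrak{q}\notin\KK$, so $\mathfrak{q}$ is a division point, so $1\in I_\mathfrak{q}$, contradicting $I\subseteq\mathfrak{q}$. (The irrelevant ideal requires a separate trivial remark: $1\notin I$ by Remark \ref{1notinI}, and since $I$ and $K$ are homogeneous this does not affect equality of radicals as graded ideals; alternatively one restricts attention to $\proj S$ throughout.) This shows $V(I)=V(K)$ as subsets of $\proj S$, hence $\sqrt{I}=\sqrt{K}$ up to irrelevant-ideal issues, which one dispatches by noting both ideals are contained in the irrelevant ideal or by a direct saturation argument.

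For the converse assertion, that $\sqrt{I}=\sqrt{K}$ implies $\omega\in\UU$, I would unwind the definition of $\UU$: I must show every $\mathfrak{p}\notin\KK$ is a division point. Fix such a $\mathfrak{p}$. Since $\mathfrak{p}\notin\KK=\proj(S/K)$ and $\mathfrak{p}$ is not the irrelevant ideal, we have $K\not\subseteq\mathfrak{p}$, hence $\sqrt{K}\not\subseteq\mathfrak{p}$, hence by hypothesis $\sqrt{I}\not\subseteq\mathfrak{p}$, hence $I\not\subseteq\mathfrak{p}$. Therefore $I_\mathfrak{p}=S_\mathfrak{p}$, i.e.\ $1\in I_\mathfrak{p}$, which is exactly the statement that $\mathfrak{p}$ is a division point of $\omega$. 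Thus $\omega\in\UU$.

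The main obstacle I anticipate is purely bookkeeping around the irrelevant ideal and the graded-versus-affine distinction: $I(\omega)$ and $K(\omega)$ are graded ideals of $S$, $\KK$ is their $\proj$, and "division point" is defined via localization at non-irrelevant homogeneous primes, so I need to be careful that $\sqrt{I}=\sqrt{K}$ as graded ideals is genuinely equivalent to $V(I)=V(K)$ in $\proj S$ together with the correct behavior at the irrelevant prime. Since $1\notin I$ (Remark \ref{1notinI}) and $J\subseteq I\subseteq K$ with $J=\IIc(\omega)$ cutting out a positive-dimensional cone, neither ideal is the unit ideal and both are contained in the irrelevant ideal, so in fact $V(I)=V(K)$ in $\proj S$ does upgrade to equality of radicals; I would state this carefully as a preliminary observation. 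Everything else is a short diagram chase through Proposition \ref{incl2} and the definition of $\UU$.
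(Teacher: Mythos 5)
Your proof is correct and follows essentially the same route as the paper's: both arguments reduce the statement to the equivalence $\omega\in\UU\iff K\subseteq\sqrt{I}$ (unwinding the definition of division points and $\KK=\proj(S/K)$ prime by prime) and then combine it with the inclusion $I\subseteq K$ of Proposition \ref{incl2}. Your extra bookkeeping about the irrelevant ideal is a reasonable addition but does not change the argument.
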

\begin{proof}
Take $\omega\in\UU$ and $\mathfrak{p}\notin\KK$. Then $I_\pp = S_\pp$, which is equivalent to $\pp \notin \mathcal{I}$,
where $\mathcal{I} = \proj(S/I)$. This way, we see that $\mathcal{I}_{red}\subseteq \KK$. Reciprocally, if $\mathcal{I}_{red}\subseteq\KK$ then every $\pp\notin\KK$ implies $\pp\notin\mathcal{I}$ and so $I_\pp=S_\pp$, meaning that $\omega\in\UU$. Then have that
\[
\omega\in\UU \iff \mathcal{I}_{red}\subseteq \KK\iff K\subseteq \sqrt{I}\ .
\]
By Proposition \ref{incl2} we already know that $I\subseteq K$. Then, taking radicals in the inclusions $I\subseteq K\subseteq \sqrt{I}$ we have the first implication of the theorem.

\medskip

If now we suppose that $\sqrt{I}=\sqrt{K}$, then $K\subseteq \sqrt{I}$ which is equivalent to $\omega\in\UU$ as we just see.

\end{proof}

\

In Section \ref{app} we will show that in certain components of the space of foliations it can be stated that, generically, $I=K$. The following example shows that this is not always the case; there exists forms
with $\sqrt{I}=\sqrt{K}$, but $I\neq K$.

\begin{example}\label{ex-transverse}
In \cite[5.4, p.~49]{transversely}, the authors find a new irreducible component of $\FF^1(\PP^3,6)$ consisting of foliations with \emph{projective transverse structure}. These foliations can be constructed by considering a differential form $\omega_0$ in $\CC^2$ as
\[
\omega_0 = x_0 dx_1 - x_1 dx_0 + P_2 dx_0 + Q_2 dx_1 + R_2 (x_0 dx_1 - x_1 dx_0)
\]
where $P_2,Q_2$ and $R_2$ are homogeneous polynomials of degree 2. As the article explains, we can consider the homogenization of $\omega_0$, $\Omega_0$, and pullback it by the automorphism of $\CC^3$ given by $\sigma(x_0,x_1,x_2) = (x_0,x_1,x_2+x_0^2)$. This way, we get a new differential 1-form $\omega_1 = \sigma^*(\Omega_0)$ which is not homogeneous. By considering its homogenization again, we finally get an integrable differential form $\omega\in \FF^1(\PP^3,6)$ with projective transverse structure.

Choosing generic polynomials as
\[
P_2 = x_0^2-x_1^2 \qquad Q_2 = x_0^2+x_1^2 \qquad R_2 = x_0^2+x_1^2+x_0 x_1,
\]
and following the process described in \emph{loc. cit.}, we find a generic foliation of such component defined by
{\footnotesize
\begin{align*}
\omega &= \left(-x_0^4 x_1-x_0^4 x_3-2 x_0^3 x_1 x_3+x_0^2 x_1^2 x_3-2 x_0 x_1^3 x_3-2 x_0^2 x_1 x_2 x_3-x_0^2 x_1 x_3^2+x_0 x_1^2 x_3^2+\right.\\
& \left. \hspace{.3cm}-x_1^3 x_ 3^2+ x_0^2 x_2 x_3^2-x_1^2 x_2 x_3^2-x_1 x_2^2 x_3^2\right) dx_0 +  \left(x_0^5+x_0^4 x_3+x_0^2 x_1^2 x_3+2 x_0^3 x_2 x_3+\right.\\
&\hspace{.3cm}\left. +x_0^3 x_3^2-x_0^2 x_1 x _3^2+x_0 x_1^2 x_3^2+x_0^2 x_2 x_3^2+x_1^2 x_2 x_3^2+x_0 x_2^2 x_3^2\right) dx_1+ \\
&\hspace{.3cm}+\left(-x_0^3 x_3^2-x_0^2 x_1 x_3^2+x_0 x_1^2 x_3^2-x_1^3 x_3 ^2\right) dx_2+\left(x_0^5+x_0^4 x_1-x_0^3 x_1^2+x_0^2 x_1^3\right) dx_3.
\end{align*}
}
Making some computations we find that: since $\sqrt{I}=\sqrt{K}$ then $\omega\in\UU$ by Theorem \ref{teo1}\,, $\sing(\omega)$ is not reduced and $I\neq K$.
\qed
\end{example}

In the case of $\PP^2$ we can state the following stronger result.

\begin{lemma}\label{I=KP2} Let $\omega\in\FF^1(\PP^2,e)$, then $I=K$.
\end{lemma}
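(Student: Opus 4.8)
The plan is to prove $K\subseteq I$; the reverse inclusion is Proposition \ref{incl2}. The obstruction to applying the division results of this section directly is dimensional: the ideals $I,J,K$ and the form $d\omega$ live on the affine cone $\CC^{3}$, which is three–dimensional, while the only elementary division statement at hand, Corollary \ref{corDiv}, concerns a \emph{surface}. So I would transport a suitable consequence of $h\in K$ down to $\PP^{2}$ itself, divide there, and pull the result back up.

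First, $K$ is a proper ideal (hence every homogeneous element of $K$ has positive degree): contracting $d\omega$ with $R$ gives $i_{R}d\omega=e\,\omega$, so $\IIc(\omega)\subseteq\mathfrak m\cdot\IIc(d\omega)$ with $\mathfrak m$ the irrelevant ideal, and $\IIc(d\omega)\subseteq J=\IIc(\omega)$ would force $\IIc(\omega)=0$ by graded Nakayama; one then invokes eq. \ref{Kbis}. Fix a homogeneous $h\in K$, $\deg h=a\geq 1$, and set
\[
\Theta\ :=\ h\,d\omega+\tfrac{e}{a}\,\omega\wedge dh\ \in\ \Omega^{2}_{S}.
\]
Using $i_{R}d\omega=e\,\omega$ and the Euler relation $i_{R}dh=a\,h$ one checks $i_{R}\Theta=0$, so $\Theta$ is (the pullback of) a twisted $2$–form on $\PP^{2}$; integrability gives $\omega\wedge\Theta=h(\omega\wedge d\omega)=0$, so $\Theta\in\ZZ^{2}(\omega)$; and every coefficient of $\Theta$ lies in $J$ — those of $\omega\wedge dh$ trivially, and those of $h\,d\omega$ because $h\in K=(J:\IIc(d\omega))$ and the coefficients of $d\omega$ generate $\IIc(d\omega)$. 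Hence, on $\PP^{2}$, $\Theta$ is a section of $\mathcal J\cdot\Omega^{2}_{\PP^{2}}\cap\ZZ^{2}(\omega)$, where $\mathcal J$ is the ideal sheaf of $\sing(\omega)$.

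Now Corollary \ref{corDiv} applied to $X=\PP^{2}$ gives that the class of $\Theta$ in the Koszul homology sheaf $H^{2}(\omega)$ vanishes, i.e. $\Theta$ is locally $\omega\wedge\eta$. To globalize: the kernel of $\omega\wedge(-)$ on $1$–forms is the line bundle spanned by $\omega$ (as $H^{1}(\omega)=0$), and $H^{1}$ of a line bundle on $\PP^{2}$ vanishes, so the local primitives glue to a global $\eta_{0}\in H^{0}(\PP^{2},\Omega^{1}_{\PP^{2}}(a))$ with $\Theta=\omega\wedge\eta_{0}$. Reading $\eta_{0}$ back as an element of $\Omega^{1}_{S}$ with $i_{R}\eta_{0}=0$ through the Euler sequence — and using that $S$ is projectively normal and the modules $\Omega^{p}_{S}$ are free, so that they equal the sections of their sheafifications — this identity holds already in $\Omega^{2}_{S}$, and therefore
\[
h\,d\omega=\Theta-\tfrac{e}{a}\,\omega\wedge dh=\omega\wedge\bigl(\eta_{0}-\tfrac{e}{a}\,dh\bigr),
\]
so $h\in I$. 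The delicate point is exactly this passage between the graded $S$–module setting and $\PP^{2}$: on $\CC^{3}$ the hypotheses of Corollary \ref{corDiv} really do fail (for instance $R\in D(\omega)$ but $R\notin J\cdot T\CC^{3}$, since $i_{R}d\omega=e\,\omega\neq 0$), and it is the ``horizontal'' correction $\Theta$ of $h\,d\omega$, together with the vanishing $H^{1}(\PP^{2},\OO(k))=0$, that rescues the surface argument; this is also why the statement is special to $\PP^{2}$ (Example \ref{ex-transverse} exhibits $I\neq K$ in $\PP^{3}$), and why only Corollary \ref{corDiv} — not Theorem \ref{division} — is needed.
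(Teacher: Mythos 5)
Your proof is correct, and its core is the same as the paper's: reduce the inclusion $K\subseteq I$ (the reverse inclusion being Proposition \ref{incl2}) to the surface division statement, Corollary \ref{corDiv}. The paper's own proof is essentially a one-line appeal to that corollary: it writes $I=(\BB^2(\omega):d\omega)$ and $K=(J\cdot\Omega^2_S:d\omega)$ and cites Corollary \ref{corDiv}, without addressing the point you isolate, namely that $I$, $J$, $K$ and $h\,d\omega$ live on the three-dimensional cone, where the hypothesis $D(\omega)\subseteq \mathcal{J}\cdot TX$ behind the surface argument genuinely fails --- your observation about the radial field is exactly right, since $\Phi(h\,d\omega)(R)=e\,h$ need not lie in $J$ for $h\in K\setminus J$. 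Your correction term $\Theta=h\,d\omega+\tfrac{e}{a}\,\omega\wedge dh$, which kills $i_R$ without leaving $\mathcal{J}\cdot\Omega^2\cap\ZZ^2(\omega)$, together with the gluing via $H^1(\PP^2,\OO(k))=0$ and the identification of $\ker i_R$ inside the free modules $\Omega^p_S$ with the twisted sheaves $\Omega^p_{\PP^2}(m)$, is precisely the bridge between the graded and the sheaf-theoretic pictures that the published proof leaves implicit; the preliminary check that $K$ is proper (via $i_Rd\omega=e\,\omega$ and graded Nakayama), needed to divide by $a=\deg h$, is also correct and is not made explicit in the paper. So: same key lemma and same strategy, but your write-up supplies a reduction that the paper's two-line argument omits and that, as far as I can see, cannot be dispensed with.
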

\begin{proof}
In a similar way to what we did with the definitions of $K$ and $L$ we can characterize the unfoldings ideal of $\omega$ as
\[
I = (\BB^2(\omega):d\omega),
\]
where $\BB^1(\omega)$ are the borders of the differential of the Koszul complex in degree 1. Since $K=(J\cdot\Omega^2_S:d\omega)$, Corollary \ref{corDiv} implies that every $\omega\in\FF^1(\PP^2,e)$ satisfies $I=K$.

\end{proof}

\medskip

We have a similar statement in $\PP^n$ only under certain conditions.

\begin{corollary}\label{technical1}
Let $\omega\in\FF^1(\PP^n,e)$ be such that $J=\sqrt{J}$ and $\KK\cap \LL= \emptyset$. Then
$I = K$.
\end{corollary}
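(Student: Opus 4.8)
The plan is to leverage Theorem \ref{teo1} together with the structure of the ideal quotients under the disjointness hypothesis. Since $J=\sqrt{J}$, Remark \ref{reduced} tells us that $K$ and $L$ are radical, and by Remark \ref{rem-u}\,\ref{coro-division} we have $\omega\in\UU$, so Theorem \ref{teo1} gives $\sqrt{I}=\sqrt{K}=K$. Thus $I\subseteq K=\sqrt{I}$ by Proposition \ref{incl2}, and it remains to prove the reverse inclusion $K\subseteq I$. The point is that $I$ need not be radical in general (Example \ref{ex-transverse}), so the extra hypothesis $\KK\cap\LL=\emptyset$ must be used precisely to force $I$ itself to be radical, hence equal to $K$.

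The first step is to translate $\KK\cap\LL=\emptyset$ into an algebraic statement. Since $\LL=\proj(S/L)$ with $L=(J:K^\infty)$ and $\KK=\proj(S/K)$, the disjointness of the two projective schemes means that $K+L$ contains a power of the irrelevant ideal, i.e. $K$ and $L$ are comaximal after saturation; combined with $J=\sqrt J$ this should give a primary-like decomposition $J = K\cap L$ (as the intersection of the saturation-complementary pieces), with $K$ and $L$ involving disjoint sets of associated primes. Concretely: every associated prime of $J$ either contains $\IIc(d\omega)$ (these assemble into $L$, as the non-Kupka part) or does not (these assemble into $K=(J:\IIc(d\omega))$), and by the hypothesis no point lies in both supports.

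The key step is then to show $I$ is radical by examining its primary decomposition via localization. For $\mathfrak p$ a minimal prime of $K$ (equivalently, a generic point of a component of $\KK$), one localizes the defining relation $h\,d\omega = \omega\wedge\eta$: because $\mathfrak p\notin\LL$ and $J=\sqrt J$, locally at $\mathfrak p$ the form $\omega$ has the expected codimension-2 normal-crossings behaviour, and one can run the division argument of Theorem \ref{division} / Corollary \ref{corDiv} to conclude that the local class of $d\omega$ in $H^2(\omega)$ is killed exactly by $K_{\mathfrak p}$, giving $I_{\mathfrak p}=K_{\mathfrak p}$. For $\mathfrak p\notin\KK$, Theorem \ref{teo1} already gives $I_{\mathfrak p}=S_{\mathfrak p}=K_{\mathfrak p}$. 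Since $I\subseteq K$ with $K$ radical, and $I_{\mathfrak p}=K_{\mathfrak p}$ at every associated prime $\mathfrak p$ of $K$ — here the disjointness $\KK\cap\LL=\emptyset$ guarantees there are no embedded primes coming from the interaction of the Kupka and non-Kupka loci — we get $I=K$ by checking equality locally at all associated primes of the larger ideal.

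The main obstacle I expect is the localization argument establishing $I_{\mathfrak p}=K_{\mathfrak p}$ at the generic points of $\KK$: one must show that the local integrating-factor equation has a solution $\eta$ for \emph{every} $h\in K_{\mathfrak p}$, not merely that the radicals agree. This is where the hypothesis that $\mathfrak p\notin\LL$ enters essentially — it ensures $D(\omega)_{\mathfrak p}\subseteq \mathcal J_{\mathfrak p}\cdot TX$ fails in a controlled way, or rather that the obstruction in $\mathcal Ext^1(N(\omega),\OO)$ computed in Proposition \ref{propDiv} vanishes after multiplying by elements of $K_{\mathfrak p}$ — and one should cite Proposition \ref{propDiv} and Corollary \ref{corDiv} to carry it out rather than redo the computation. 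A secondary subtlety is making sure the global equality follows from the local ones, which requires knowing $K$ (being radical) has no embedded primes, so that equality at minimal primes of $K$ together with the inclusion $I\subseteq K$ suffices.
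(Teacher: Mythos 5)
There is a genuine gap, and it sits exactly where you flag ``the main obstacle'': the local equality $I_{\mathfrak p}=K_{\mathfrak p}$ is never actually established, and the tool you reach for is the wrong one. At a generic point $\mathfrak p$ of a Kupka component, $D(\omega)$ contains vector fields that are nonzero modulo $\mathfrak p$ (this is precisely the Kupka normal form), so the hypothesis $D(\omega)\subseteq \mathcal{J}\cdot TX$ of Proposition \ref{propDiv} fails and Corollary \ref{corDiv} is a strictly $2$-dimensional statement; neither gives you a solution of $h\,d\omega=\omega\wedge\eta$ for every $h\in K_{\mathfrak p}$. The paper's proof needs no division argument at all: since $J$ is radical, $J=K\cap L$ with $K=(J:\IIc(d\omega))$ and $L=(J:K)$ collecting complementary sets of minimal primes, so $\KK\cap\LL=\emptyset$ forces $L_{\mathfrak p}=S_{\mathfrak p}$ and hence $J_{\mathfrak p}=K_{\mathfrak p}$ at any $\mathfrak p$ in the support of $S/K$. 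The sandwich $J\subseteq I\subseteq K$ of Proposition \ref{incl2} then gives $I_{\mathfrak p}=J_{\mathfrak p}=K_{\mathfrak p}$ for free (elements of $J$ lie in $I$ by contracting $\omega\wedge d\omega=0$ with coordinate fields), with no integrability analysis needed.

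Your local-to-global step is also not correct as stated: checking $I_{\mathfrak p}=K_{\mathfrak p}$ at the associated (let alone only the minimal) primes of the \emph{larger} ideal $K$ does not imply $I=K$. For instance $(x^2,xy)\subsetneq(x)$ in $\CC[x,y]$ agree after localizing at the unique associated prime $(x)$ of the larger ideal. To conclude $K/I=0$ one must localize at the associated primes of $S/I$, since $\mathrm{Ass}(K/I)\subseteq\mathrm{Ass}(S/I)$; equivalently, one must rule out embedded primes of $I$. You assert that the disjointness hypothesis ``guarantees there are no embedded primes,'' but that is precisely the content to be proved, and it is obtained in the paper by running the argument above at an arbitrary associated prime $\mathfrak p$ of $I$: there $I_{\mathfrak p}=J_{\mathfrak p}$ is radical, so the $\mathfrak p$-primary component of $I$ is $\mathfrak p$ itself and $\mathfrak p$ cannot be embedded; then $I$ is radical and Theorem \ref{teo1} finishes, $I=\sqrt{I}=\sqrt{K}=K$. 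Your opening reductions (Remark \ref{reduced}, $\omega\in\UU$, $\sqrt{I}=\sqrt{K}=K$) are correct and match the paper.
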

\begin{proof}
By Remark \ref{reduced} we have that $K$ and $L$ are also radical ideals. Let $\pp$ be an \emph{associated prime} of $I$. Using the hypothesis $\KK\cap \LL=\emptyset$ we get that $J_\pp=K_\pp$. Then, by the inclusions $J\subseteq I\subseteq K$ of Property \ref{incl2}, we have that
\[
J_\pp = I_\pp = \mathfrak{q}
\]
where $\mathfrak{q}$ is a $\pp$-primary ideal. Since $J$ is radical, necessarily $\mathfrak{q}=\pp$ and $\pp$ cannot be an embedded prime.
The result now follows from Theorem \ref{teo1}.

\end{proof}

\begin{remark}
In \cite{fmi}, it is shown that the singular locus of generic logarithmic foliations can be decomposed as the disjoint union of the Kupka set and a finite number of isolated points. Even if the authors do not say it, the article also applies to generic rational foliations. In \cite{suwa-multiform} and \cite{suwa-meromorphic}, the unfoldings ideal of generic rational and logarithmic foliations is classified in terms of the functions defining such foliations.
Putting together these works, one can conclude that $I=K$ in these irreducible components.

Also, in Section \ref{app}, we will show that the equality $I=K$ holds generically for pullback and split tangent sheaf foliations. Notice that the assumptions of Corollary \ref{technical1} are verified in all these components we are mentioning.

Following example \ref{ex-transverse} (and many others of the same type that we were able to compute), we believe that it should not be expected that $I=K$ in the component of foliations with projective transverse structure, therefore in $\FF^1(\PP^n,e)$.
\qed
\end{remark}

\medskip

Despite the previous result, the hypotheses $J$ radical and $\KK\cap\LL=\emptyset$ are not necessary to imply $I=K$, as the next two examples shows. For the computations of $I$, see eq. \ref{I-comp}.
\begin{example}
Consider the differential $1$-form in $\PP^2$,
\[
\omega = x_0^2x_2dx_0+x_1^2x_2dx_1+(-x_0^3-x_1^3)dx_2.
\]
The scheme $\sing(\omega)$ consists of three points with multiplicities $1,2$ and $4$.
Also, $\KK$ is the union of the points with multiplicities $1$ and $2$
and $\LL$ is the other point. In this case we have $K\cap L=J$ and $I=K$.
\qed
\end{example}

\

\begin{example}
The family of \textit{Dulac} foliations in $\PP^3$ of type $(p,q)\in\NN^2$, $\mathcal{D}(p,q)$, see \cite[Cap. 1, p.~48]{omegar-libro}, is defined by differential 1-forms as
\[
\omega_{(p,q)} = i_Ri_Yi_X(dx_0\wedge\ldots\wedge dx_3)
\]
where $X$ and $Y$ are vector fields defined as
{\footnotesize
\[
\begin{aligned}
X &= -(q+1)x_0^{p+q-1}x_1\frac{\partial}{\partial x_1} + (p+1)x_0^{p+q-1}x_2 \frac{\partial}{\partial x_2} + \\
& \hspace{.3cm}  + \left[(p-q)x_0^{p+q-1}x_3+ \big((q+1)\beta-(p+1)\alpha\big) x_1^px_2^q\right]\frac{\partial}{\partial x_3}\\
Y &= -\beta x_1\frac{\partial}{\partial x_1} + \alpha x_2\frac{\partial}{\partial x_2} - (p\beta - q\alpha)x_3\frac{\partial}{\partial x_3}
\end{aligned}
\]
}
with $\alpha,\beta\in\CC$ and $R$ is the radial vector field. Note that $[X,Y]=0$.

\

Taking $\alpha=1$ and $\beta=2$ we define the following Dulac foliation of type $(p,q)=(1,1)$ as
\[
\begin{aligned}
\omega_{(1,1)} &= (6x_1^2x_2^2 + 2 x_0x_1x_2x_3) \ dx_0 + (-2 x_0x_1x_2^2-2 x_0^2x_2x_3) \ dx_1 + \\
 & \hspace{.3cm}+ (-4x_0x_1^2x_2-2 x_0^2x_1x_3) \ dx_2 + 2 x_0^2x_1x_2 \ dx_3
\end{aligned}
\]

The scheme $\LL$ is the reduced line $\{x_1=x_2=0\}$. And $\KK$
has $4$ components; two reduced, given by $\{x_1=x_3=0\} \cup \{x_2=x_3=0\}$, and two of multiplicity $2$, given by $\{x_0=x_1=0\}\cup\{x_0=x_2=0\}$.
Despite this pathological situation, we still have  $I=K$.

As one can see in this example the decomposition $\KK\cup\LL$ fails to be $\sing(\omega)$ at a schematic level, since it do not cover all the multiplicities of $\sing(\omega)$. The primary decomposition of $\sing(\omega)$ it is given by 5 components 3 reduced and 2 of multiplicity 4. The two missing components of multiplicity 2, that are the same that $\KK$ has, can be found in the quotient ideal given by $(J(\omega):K(\omega))$.
\qed
\end{example}

\bigskip

From Theorem \ref{teo1} we can draw several results relating, unfoldings and the classes of isomorphism of $\omega$ and with its singular locus and its decomposition in the ideals $K$ and $L$.

\begin{corollary}\label{minimal}
If $\sing(\omega)$ is reduced then the minimal components of $I/J$ and $S/L$ coincide.
\end{corollary}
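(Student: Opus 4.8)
The plan is to compute the annihilator of the $S$-module $I/J$, show that it equals $L$, and then compare $I/J$ with $S/L$ after localizing at each minimal prime of their common support.

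First I would record the reductions supplied by the hypothesis. Since $\sing(\omega)$ is reduced we have $J=\sqrt{J}$ (I read ``reduced'' in the affine-cone sense of Remark \ref{rem-u}), hence $\omega\in\UU$ by that same remark, and $K$ and $L$ are radical by Remark \ref{reduced}. Theorem \ref{teo1} then gives $\sqrt{I}=\sqrt{K}=K$. Now $\ann_S(I/J)=(J:I)$, and since $J$ is radical Lemma \ref{quotient} applies twice: $(J:I)=(J:\sqrt{I})=(J:K)=(J:K^\infty)=L$. Therefore $\mathrm{supp}(I/J)=V(L)=\LL=\mathrm{supp}(S/L)$; in particular $I/J$ and $S/L$ have exactly the same minimal primes, and by the standard description of the zero set of a saturation these are precisely the minimal primes $\pp$ of $J$ with $K\not\subseteq\pp$, i.e.\ the codimension-two components of $\sing(\omega)$ lying in $\LL$ but not in $\KK$.

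Next I would carry out the local comparison. Fix such a minimal prime $\pp$. Being a minimal prime of the radical ideal $J$, and $S_\pp$ being reduced, we get $J_\pp=\pp S_\pp$, the maximal ideal of $S_\pp$; and $\pp\notin\KK$ gives $K_\pp=S_\pp$. Localizing the chain $J\subseteq I\subseteq K$ of Proposition \ref{incl2} yields $\pp S_\pp\subseteq I_\pp\subseteq S_\pp$, so $I_\pp$ is either $\pp S_\pp$ or $S_\pp$; since $\pp\in\mathrm{supp}(I/J)$ the localization $(I/J)_\pp$ is nonzero, which forces $I_\pp=S_\pp$ and hence $(I/J)_\pp\cong k(\pp)$. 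On the other hand $L$ is radical with minimal prime $\pp$, so $L_\pp=\pp S_\pp$ and $(S/L)_\pp\cong k(\pp)$ as well. Thus at every minimal prime of the common support the localizations of $I/J$ and $S/L$ agree (both equal the residue field), which is precisely the assertion that their minimal primary components coincide.

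The computation is short, and I expect the only delicate points to be interpretive rather than mathematical: pinning down ``reduced'' as $J=\sqrt{J}$ (so that Remark \ref{reduced} and Lemma \ref{quotient} apply cleanly) and ``minimal component'' as the primary component attached to a minimal associated prime, together with making explicit that the embedded primes of $I/J$ — which are supported on $\KK\cap\LL$ and obstruct a global isomorphism $I/J\cong S/L$ — are exactly what the word ``minimal'' discards. The load-bearing step is the identification $(J:I)=L$ in the second paragraph; everything after it is formal.
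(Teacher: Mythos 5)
Your proof is correct and follows essentially the same route as the paper's: the load-bearing identity $\ann(I/J)=(J:I)=(J:\sqrt{I})=(J:K)=L=\ann(S/L)$ is exactly the paper's chain of equalities, after which the paper simply cites \cite[\S 6, Theorem 6.5 (iii), p.~39]{MR879273} where you instead localize at each minimal prime and check that both modules become $k(\pp)$ (a harmless strengthening). The only point you elide is the one the paper dispatches in its final two lines: if ``reduced'' is read projectively rather than as $J=\sqrt{J}$, the irrelevant ideal may occur as an embedded prime of $J$, and one must note that this can only perturb the comparison at the irrelevant ideal, which does not affect the minimal components.
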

\begin{proof}
Assume $\sqrt{J}=J$. Then,
\begin{enumerate}
\item $L=(J:K)$ since $K$ is also radical by Remark \ref{reduced}.
\item $(J:I) = (J:\sqrt{I})$ since $J$ is radical by Lemma \ref{quotient}.
\end{enumerate}
By Theorem \ref{teo1}, we know that $\sqrt{I}=\sqrt{K}$ from which we have the following chain of equalities
\[
\ann(I/J) = (J:I) = (J:K) = L=\ann(S/L).
\]
The result follows from \cite[\S 6, Theorem 6.5 (iii), p.~39]{MR879273}.

If $\sing(\omega)$ is reduced, the irrelevant ideal may be an associated prime of $J$.
Then, the minimal associated primes of $S/L$ and of $I/J$ may differ only
by the irrelevant ideal. In any case, the result follows.

\end{proof}

\

\begin{corollary}\label{prop1}
Let $\omega\in\UU$. If $K$ and $\IIc(d\omega)$ are coprime (comaximal), then
\[
I\big/J\cong S\big/ L.
\]
Also, if $\KK\cap\sing(d\omega)=\emptyset$, then the Hilbert polynomial
of $I/J$ and $S/L$ coincide.
\end{corollary}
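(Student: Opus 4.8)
The plan is to produce a single explicit degree-preserving map of graded $S$-modules
\[
\phi\colon I/J\longrightarrow S/L,\qquad \overline{h}\longmapsto h+L,
\]
and to control its kernel and cokernel by localization. This $\phi$ is well defined since $J\subseteq L=(J:K^{\infty})$, so the composite $I\hookrightarrow S\twoheadrightarrow S/L$ annihilates $J$; moreover $\ker\phi=(I\cap L)/J$ and $\operatorname{coker}\phi\cong S/(I+L)$. The whole point is that these two modules will be forced to be zero under the comaximality hypothesis, and of finite length under the weaker projective hypothesis.

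First I would collect the formal ingredients already at hand: $J\subseteq I\subseteq K$ by Proposition~\ref{incl2}; $\sqrt{I}=\sqrt{K}$ by Theorem~\ref{teo1}, using $\omega\in\UU$; and the description $K=(J:\IIc(d\omega))$ of eq.~\ref{Kbis}. Then comes the key local computation at a prime $\pp\subseteq S$ for which $\IIc(d\omega)\not\subseteq\pp$ \emph{or} $K\not\subseteq\pp$. If $\IIc(d\omega)\not\subseteq\pp$, then $\IIc(d\omega)_{\pp}=S_{\pp}$, hence $K_{\pp}=(J_{\pp}:S_{\pp})=J_{\pp}$; the squeeze $J_{\pp}\subseteq I_{\pp}\subseteq K_{\pp}$ gives $I_{\pp}=J_{\pp}$, while $L_{\pp}=(J_{\pp}:K_{\pp}^{\infty})=(J_{\pp}:J_{\pp}^{\infty})=S_{\pp}$. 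If instead $K\not\subseteq\pp$, then $K_{\pp}=S_{\pp}$, so by $\sqrt{I}=\sqrt{K}$ also $I\not\subseteq\pp$ and $I_{\pp}=S_{\pp}$, whereas $L_{\pp}=(J_{\pp}:S_{\pp}^{\infty})=J_{\pp}$. In either case $\{I_{\pp},L_{\pp}\}=\{J_{\pp},S_{\pp}\}$, so $(I\cap L)_{\pp}=J_{\pp}$ and $(I+L)_{\pp}=S_{\pp}$; equivalently $\ker\phi$ and $\operatorname{coker}\phi$ vanish after localizing at any such $\pp$.

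To conclude the first assertion: if $K$ and $\IIc(d\omega)$ are comaximal, $K+\IIc(d\omega)=S$, then no prime of $S$ contains both, so \emph{every} prime $\pp$ falls under the local computation; hence $\ker\phi$ and $\operatorname{coker}\phi$ are locally zero everywhere, thus zero, and $\phi$ is an isomorphism of graded $S$-modules. For the second assertion, if $\KK\cap\sing(d\omega)=\emptyset$ as projective schemes, then $\sqrt{K+\IIc(d\omega)}$ equals the irrelevant ideal $(x_{0},\dots,x_{n})$, so the only prime that can contain $K+\IIc(d\omega)$ is the irrelevant one; therefore $\ker\phi$ and $\operatorname{coker}\phi$ are supported at most at the irrelevant ideal, that is, have finite length. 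Splitting $\phi$ into the short exact sequences $0\to\ker\phi\to I/J\to\operatorname{im}\phi\to 0$ and $0\to\operatorname{im}\phi\to S/L\to\operatorname{coker}\phi\to 0$, and using that the Hilbert polynomial is additive on short exact sequences of finitely generated graded modules and vanishes on finite-length ones, one gets that $I/J$ and $S/L$ have the same Hilbert polynomial.

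The argument is almost entirely bookkeeping once $\phi$ has been identified; the delicate point is to keep straight the two hypotheses --- genuine comaximality of the ideals in $S$, which kills $\ker\phi$ and $\operatorname{coker}\phi$ outright, versus mere disjointness of the associated projective subschemes, which only shrinks them to finite length and hence makes them invisible to the Hilbert polynomial. Everything else reduces to the squeeze $J_{\pp}\subseteq I_{\pp}\subseteq K_{\pp}$ together with the equality $\sqrt{I}=\sqrt{K}$, both of which are already available.
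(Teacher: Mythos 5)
Your proof is correct, and it takes a genuinely different route from the paper's. Both arguments ultimately rest on the same inputs ($J\subseteq I\subseteq K$, $\sqrt{I}=\sqrt{K}$ from Theorem \ref{teo1}, $K=(J:\IIc(d\omega))$ and $L=(J:K^\infty)$) and both ultimately establish $I\cap L=J$ and $I+L=S$; the difference is how. The paper argues globally with colon-ideal arithmetic: it first shows $\IIc(d\omega)\subseteq L$, hence $K+L=S$, upgrades this to $I+L=S$ via $\sqrt{I}=\sqrt{K}$ and \cite[Proposition 1.16]{MR0242802}, proves $I\cap L=K\cap L=\bigcup_{n}(J:\IIc(d\omega)+K^n)=J$, and concludes with the second isomorphism theorem, then handles the Hilbert polynomials by a separate computation with saturations $(\,\cdot\,:\mathfrak{m}^\infty)$. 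You instead package everything into the natural graded map $\phi\colon I/J\to S/L$ and show its kernel $(I\cap L)/J$ and cokernel $S/(I+L)$ die after localizing at any prime $\pp$ not containing $K+\IIc(d\omega)$, via the squeeze $J_\pp\subseteq I_\pp\subseteq K_\pp$. This local-to-global formulation has the advantage of treating both assertions of the corollary with a single lemma: comaximality of the ideals means every prime is covered, killing kernel and cokernel outright, while mere disjointness of the projective supports confines them to the irrelevant ideal, hence to finite length, which is invisible to the Hilbert polynomial. The only (harmless) imprecision is the phrase ``$\sqrt{K+\IIc(d\omega)}$ equals the irrelevant ideal'' in the second part --- it could also equal $S$ --- but your actual use of it (the only prime that can contain $K+\IIc(d\omega)$ is the irrelevant one) is exactly right.
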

\begin{proof}
First note that $\IIc(d\omega)\subseteq L$,
\[
 L=(J:K^{\infty})\supseteq(J:K)=(J:(J:\IIc(d\omega))\supseteq \IIc(d\omega).
\]
Then,
\[
 S=K+\IIc(d\omega)\subseteq K+L\subseteq S\Longrightarrow K+L=S.
\]
From \cite[Proposition 1.16, p.~9]{MR0242802} and given that $\sqrt{K}=\sqrt{I}$ we obtain, $I+L=S$.

Second, let us prove that $I\cap L=J$,
\[
J\subseteq I\cap L\subseteq K\cap L=
\bigcup_{n>0}(J:\IIc(d\omega)+K^n)=J.
\]
The equality $K^n+\IIc(d\omega)=S$ for all $n>0$, follows again from \cite[Proposition 1.16, p.~9]{MR0242802}.

Finally,
\[
 I/J=I/I\cap L\cong (I+L)/L=S/L.
\]

The last part follows because the Hilbert polynomial of an ideal
$X$ and $(X:\mathfrak{m}^\infty)$ coincide, where $\mathfrak{m}$ is the irrelevant ideal.
Given that $\KK\cap\sing(d\omega)=\emptyset$, $\KK\cap\LL=\emptyset$ and $\proj(S/I)\cap\LL=\emptyset$.
Also, $K^n+\IIc(d\omega)$ is $\mathfrak{m}$-primary
or equal to $S$.
In any case, $(I\cap L:\mathfrak{m}^\infty)=(J:\mathfrak{m}^\infty)$.
Then,
\[
P_{I/J}=P_{I}-P_{\widehat{J}}=
P_{I}-P_{\widehat{I\cap L}}=
P_{I+L}-P_{L}=
P_{S/L},
\]
where $P_{X}$ is the Hilbert polynomial of the ideal $X$ and $\widehat{X}$ is the saturation of
the ideal $X$.

\end{proof}

\bigskip

In the following example we show that the hypothesis of the previous corollary are necessary.

\begin{example}
Let us consider 3 vector fields with linear coefficients in $\PP^4$, $S,X,Y$, such that
\[
[S,X]=-X\qquad [S,Y]=Y\qquad [X,Y]=2S.
\]
Then $\{S,X,Y\}$ define a Lie algebra isomorphic to $\mathfrak{sl}(2,\CC)$ and the 1-form defined as
\[
\omega = i_Ri_Si_Xi_Y(dx_0\wedge dx_1\wedge dx_2\wedge dx_3)
\]
gives rise to a degree 3 foliation given by the action of $PSL(2,\CC)$ in $\PP^4$, see
\cite[Cap 1, p.~53]{omegar-libro}. Taking
\[
\begin{aligned}
S &= x_0  \frac{\partial}{\partial x_0} - x_1 \frac{\partial}{\partial x_1} + 2  x_2  \frac{\partial}{\partial x_2} - 2 x_3  \frac{\partial}{\partial x_3}\\
X &= x_4\frac{\partial}{\partial x_0}+x_3\frac{\partial}{\partial x_1}+x_0\frac{\partial}{\partial x_2}+x_1\frac{\partial}{\partial x_4}\\
Y &= -4x_2\frac{\partial}{\partial x_0}-6x_4\frac{\partial}{\partial x_1}-4x_1\frac{\partial}{\partial x_3}-6x_0\frac{\partial}{\partial x_4}
\end{aligned}
\]
we get the differential 1-form
{\footnotesize
\[
\begin{aligned}
\omega &= \left(12 x_1^3 x_2-6 x_0^2 x_1 x_3+24 x_0 x_2 x_3^2-4 x_0 x_1^2 x_4-32 x_1 x_2 x_3 x_4+12 x_0 x_3 x_4^2\right) \ dx_0\ + \\
&\hspace{.2cm} + \left(-4 x_0 x_1^2 x_2+18 x_0^3 x_3+16 x_1 x_2^2 x_3-4 x_0^2 x_1 x_4-32 x_0 x_2 x_3 x_4+8 x_1 x_2 x_4^2\right)\  dx_1\ +\\
&\hspace{.2cm} + \left(-8 x_0 x_1^3-4 x_1^2 x_2 x_3-18 x_0^2 x_3^2+28 x_0 x_1 x_3 x_4+8 x_2 x_3^2 x_4+4 x_1^2 x_4^2-12 x_3 x_4^3\right) \ dx_2\ +\\
&\hspace{.2cm} + \left(-12 x_0^3 x_1-12 x_1^2 x_2^2-6 x_0^2 x_2 x_3+28 x_0 x_1 x_2 x_4+8 x_2^2 x_3 x_4+6 x_0^2 x_4^2-12 x_2 x_4^3\right) \ dx_3\ +\\
&\hspace{.2cm} + \left(8 x_0^2 x_1^2+8 x_0 x_1 x_2 x_3-16 x_2^2 x_3^2-12 x_1^2 x_2 x_4-18 x_0^2 x_3 x_4+24 x_2 x_3 x_4^2\right) \ dx_4
\end{aligned}
\]
}

In this situation, $J(\omega)$ is radical. Then,
by Remark \ref{rem-u} \ref{coro-division}, $\omega\in\UU$.
In fact $\sing(\omega)$ has two irreducible components, $\KK$ and $\LL$, both of codimension 2.
Also, $\LL=\sing(d\omega)$, $K=I$ and $\KK\cap \sing(d\omega)\neq\emptyset$,

{\footnotesize
\[
K = I = \left( 2x_0x_1-2x_2x_3-x_4^2,6x_1^2x_2+9x_0^2x_3-18x_2x_3x_4-x_4^3\right),
\]
\[
L= \IIc(d\omega)= \left( x_0x_1x_3+2x_2x_3^2+x_1^2x_4-3x_3x_4^2,2x_1^2x_2-3x_0^2x_3\right.,
\]
\[
2x_0x_1x_2+4x_2^2x_3+3x_0^2x_4-6x_2x_4^2,x_1^3+3x_0x_3^2-3x_1x_3x_4,x_0x_1^2+2x_1x_2x_3-3x_0x_3x_4,
\]
\[
\left.x_0^2x_1+2x_0x_2x_3-2x_1x_2x_4,3x_0^3+4x_1x_2^2-6x_0x_2x_4\right).
\]
}

We can see that $I/J\neq S/L$ by computing the Hilbert polynomials of both graded modules, $P_{I/J}$ and $P_{S/L}$, obtaining
\[
\begin{aligned}
P_{I/J} = 4 P_2 - 11 P_1 + 10 P_0\qquad P_{S/L} = 4 P_2 - 3 P_1
\end{aligned}
\]
where $P_r$ is the Hilbert polynomial of $\PP^r$. Notice that the degree of $P_{S/L}$ shows that $\codim(L) = 2$.
Finally, Corollary \ref{minimal} explains why the highest degree term of both polynomials coincide.
\qed
\end{example}

\bigskip

In this example we show a form $\omega\in\UU$ in $\PP^2$ such that
$K$ and $\IIc(d\omega)$ are not comaximal, but
$\KK\cap\sing(d\omega)=\emptyset$. Generic logarithmic foliations present the same behavior as well, see \cite{fmi}.
\begin{example}
Let us consider a 1-form $\omega$ in $\PP^2$ as
\[
\omega=x_2x_1dx_0+x_2x_0dx_1-(x_0x_1+x_0x_1)dx_2.
\]
The singular ideal is equal to $J=\left( x_1x_2,x_0x_2,x_0x_1\right)$ and it is radical.
Also, $K=\left( x_2,x_0x_1\right)$ and $L=\IIc(d\omega)=\left( x_0,x_1\right)$.
In this case $S/L$ is different from $I/J$, but the Hilbert polynomials coincide.
Note that $\KK$ is equal to the union of two lines, $\{x_0=x_2=0,x_1=x_2=0\}$,
and $\LL$ is equal to another line $\{x_0=x_1=0\}$.
Then, $\KK\cap  \sing(d\omega)=\emptyset$,
but the radical of $K+\IIc(d\omega)$ is the irrelevant ideal $\left( x_0,x_1,x_2\right)$.
From the previous corollary the Hilbert polynomials coincide, specifically, $P_{I/J}=P_{S/L}=1$.
\qed
\end{example}

\bigskip

\begin{remark} Let $\omega\in\mathcal{U}\subseteq\FF^1(\PP^n,e)$.
By Theorem \ref{teo1} we know that $\sqrt{I}=\sqrt{K}$, then there exists a natural number $n$ such that $K^n\subseteq I$. Then
for every $f\in K$, there exists $\eta\in \Omega^1_S$
such that  $f^n d\omega =\omega\wedge\eta$.
Equivalently, $f^n \overline{d\omega}=0$ in $H^2(\omega)$.
\end{remark}

\bigskip

From Theorem \ref{teo1} we can conclude the existence of Kupka points under very general conditions. It is worth mentioning that such result is the first general result on the existence of Kupka points for foliations on $\PP^n$.

\begin{theorem}\label{teo3}
Let $\omega\in\FF^1(\PP^n,e)$ such that $J=\sqrt{J}$.
Then
\[
\KK=\KK_{set}\neq \emptyset.
\]
\end{theorem}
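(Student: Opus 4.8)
The plan is to reduce the whole statement to one elementary degree comparison. Since $J=\sqrt J$, Lemma \ref{K=Kset} already gives the first equality $\KK=\KK_{set}$, so it suffices to exhibit a single Kupka point, i.e. to show $\KK_{set}\neq\emptyset$. We argue by contradiction. If $\omega$ has no Kupka point, then $d\omega$ vanishes at every point of $\sing(\omega)$, so every polynomial coefficient of $d\omega$ lies in the homogeneous ideal of the set $\sing(\omega)_{set}$; by the Nullstellensatz this ideal is $\sqrt{\IIc(\omega)}=\sqrt J=J$, whence $\IIc(d\omega)\subseteq J$. On the other hand, contracting the identity $i_Rd\omega=e\,\omega$ with the radial field (as in Remark \ref{rem-u}) expresses each coefficient $A_i$ of $\omega$ as an $S$-linear combination of the coefficients of $d\omega$, so $J=\IIc(\omega)\subseteq\IIc(d\omega)$ always holds; together with the previous inclusion this forces $\IIc(d\omega)=J$.

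The contradiction now comes from degrees. First $d\omega\neq 0$, since otherwise $e\,\omega=i_Rd\omega=0$ would give $\omega=0$; hence $\IIc(d\omega)$ contains a nonzero homogeneous polynomial of degree $e-2$. But $J$ is generated by the forms $A_0,\dots,A_n$ of degree $e-1$, so $J$ has no nonzero element of degree $<e-1$; thus $\IIc(d\omega)\supsetneq J$, contradicting $\IIc(d\omega)=J$. Therefore $\KK=\KK_{set}\neq\emptyset$.

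The one point that needs care is the degenerate case $\sing(\omega)_{set}=\emptyset$, in which the inclusion $\IIc(d\omega)\subseteq\II(\sing(\omega)_{set})$ of the first paragraph becomes vacuous. I expect this to be the only genuinely nontrivial step, and it is handled by the same degree count: comparing degrees shows $d\omega$ is not a boundary in $Kosz^\bullet(\omega)$, so $H^2(\omega)\neq 0$ (this is the remark recalled after Theorem \ref{koszul-teo} and used in Remark \ref{1notinI}), and then Theorem \ref{koszul-teo} forces $\codim(\sing(\omega))=2$, in particular $\sing(\omega)_{set}\neq\emptyset$. Alternatively the argument can be phrased on the unfoldings side: $J=\sqrt J$ gives $\omega\in\UU$ by Remark \ref{rem-u}, so Theorem \ref{teo1} yields $\sqrt I=\sqrt K$, which already gives $K\neq S$ via Remark \ref{1notinI}; ruling out $\sqrt K=\mathfrak{m}$ then amounts, through the decomposition $K=(J:\IIc(d\omega))=\bigcap_{\IIc(d\omega)\not\subseteq\pp_i}\pp_i$ over the minimal primes $\pp_i\neq\mathfrak{m}$ of $J$ having nonempty index set, to the very same inequality $\IIc(d\omega)\not\subseteq J$. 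So in either presentation the crux is simply that the coefficients of $d\omega$ sit one degree below those of $\omega$.
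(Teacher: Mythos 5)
Your proof is correct, and it takes a genuinely more elementary route than the paper's. Both arguments dispose of the equality $\KK=\KK_{set}$ via Lemma \ref{K=Kset}, and both need $\sing(\omega)_{set}\neq\emptyset$, which you correctly extract from $H^2(\omega)\neq 0$ and Theorem \ref{koszul-teo}. The difference lies in how non-emptiness of the Kupka set is obtained. The paper first shows that $K$ is a proper ideal by combining $1\notin I$ (Remark \ref{1notinI}) with $\sqrt{I}=\sqrt{K}$ from Theorem \ref{teo1} --- and the latter rests on $\omega\in\UU$, hence ultimately on the formal division property of Theorem \ref{division} --- and then rules out $K=\mathfrak{m}$ by showing that every associated prime of $K=(J:\IIc(d\omega))$ is an associated prime of $J$. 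You instead observe that the absence of Kupka points would force $\IIc(d\omega)\subseteq\II(\sing(\omega)_{set})=\sqrt{J}=J$ by the projective Nullstellensatz, which is impossible because $d\omega\neq 0$ (as $i_Rd\omega=e\,\omega$) has homogeneous coefficients of degree $e-2$, whereas $J$ is generated in degree $e-1$ and so contains no nonzero homogeneous element of lower degree. This bypasses Theorems \ref{division} and \ref{teo1} entirely. It is worth noting that your degree count proves unconditionally that $\IIc(d\omega)\not\subseteq J$, i.e.\ that $K=(J:\IIc(d\omega))$ is proper for \emph{every} $\omega\in\FF^1(\PP^n,e)$; the hypothesis $J=\sqrt{J}$ enters only through the Nullstellensatz step and through Lemma \ref{K=Kset}. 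What the paper's longer route buys is the link with the unfoldings ideal $I$ and the identity $\sqrt{I}=\sqrt{K}$, which is the thematic heart of the article; your argument shows that the bare existence of Kupka points does not require that machinery.
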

\begin{proof}
Since $\sing(\omega)\neq\emptyset$, the irrelevant ideal of $S$,
$\mathfrak{m}$, can not be an associated prime of $\sqrt{J}$.

Note that $K$ is proper because $1\not\in I$ and $\sqrt{I}=\sqrt{K}=K$, by Remark \ref{1notinI} and Theorem \ref{teo1}, respectively, since $J$ radical implies $K$ radical and $\omega\in\UU$.

If $\KK=\emptyset$, the irrelevant ideal is an associated prime
of $K$, but let us see that any associated prime of $K$ is
an associated prime of $J$. Consider $(K:x)$ an associated prime of $K$. Then,
\[
(K:x)=((J:\IIc(d\omega)):x)=\bigcap_{y\in \IIc(d\omega)} (J:yx)=(J:y_0x),
\]
for some $y_0\in\IIc(d\omega)$. The last equality follows from \cite[Prop. 1.11(ii), p.~8]{MR0242802}
implying that $(J:y_0x)$ is an associated prime of $J$.
Then, the irrelevant ideal is an associated prime of $J=\sqrt{J}$. A contradiction.
Hence, $\KK\neq \emptyset$.

Since $J$ is radical, By Lemma \ref{K=Kset}, we get $\KK=\KK_{set}$ concluding our result.

\end{proof}

\medskip

\begin{proposition}
Let $\omega\in\FF^1(\PP^n,e)$.
If $\KK_{set}\neq \emptyset$, then the reduced Kupka scheme has pure codimension 2.

In particular, if $J=\sqrt{J}$, $\KK$ has pure codimension 2.
\end{proposition}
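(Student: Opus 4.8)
The plan is to reduce the statement to a purely local assertion at Kupka points, where the classical normal form of Kupka is available. Recall that $\KK_{set}$ is, by definition, the closure of the \emph{Kupka locus} $\KK^{0}:=\{p\in\sing(\omega)\,:\,d\omega|_{p}\neq 0\}$, which is open in $\sing(\omega)$ and hence locally closed and reduced in $\PP^{n}$. Since the irreducible components of a closure are precisely the closures of the irreducible components, and since passing to the closure preserves dimension, it is enough to show that $\KK^{0}$ has pure dimension $n-2$; and for that it suffices to verify $\dim_{p}\KK^{0}=n-2$ for every $p\in\KK^{0}$.

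So I would fix a Kupka point $p$ and apply Kupka's normal form (\cite{kupka}; see also \cite[Ch.~1.4, p.~38]{aln}): around $p$ there exist analytic coordinates $(x_{1},\dots,x_{n})$ centered at $p$ and a germ of $1$-form $\eta=A(x_{1},x_{2})\,dx_{1}+B(x_{1},x_{2})\,dx_{2}$ with $A(0)=B(0)=0$ and $(\partial_{x_{1}}B-\partial_{x_{2}}A)(0)\neq 0$, such that $\omega$ is the pullback of $\eta$ along the projection onto the first two coordinates. Next I would observe that, because $\sing(\omega)$ has codimension $\geq 2$ in $\PP^{n}$ and codimension is a local notion, the germs $A$ and $B$ cannot share a non-unit factor; hence $V(A,B)$ is, set-theoretically near the origin, the smooth codimension-$2$ plane $\{x_{1}=x_{2}=0\}$, and therefore $\sing(\omega)$ agrees near $p$ with that plane. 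Finally, $d\omega$ is the pullback of $d\eta=(\partial_{x_{1}}B-\partial_{x_{2}}A)\,dx_{1}\wedge dx_{2}$, whose coefficient is evaluated at $(x_{1},x_{2})=(0,0)$ along $\{x_{1}=x_{2}=0\}$ and so does not vanish; thus every nearby point of $\sing(\omega)$ is again a Kupka point. Consequently a neighbourhood of $p$ in $\KK^{0}$ is smooth, connected and of dimension $n-2$, giving $\dim_{p}\KK^{0}=n-2$. Since $p$ was arbitrary, $\KK^{0}$, and hence $\KK_{set}$, is of pure codimension $2$. (Alternatively, one may simply quote the classical fact recalled in the Introduction that near a Kupka point $\sing(\omega)$ is a smooth variety of codimension $2$.)

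For the last assertion, assume $J=\sqrt{J}$. Then Theorem \ref{teo3} gives $\KK=\KK_{set}\neq\emptyset$, and $K$ is radical by Remark \ref{reduced}; hence the Kupka scheme $\KK$ is reduced with underlying variety $\KK_{set}$, which by the above is pure of codimension $2$. The only step that needs any attention is the ``no common factor'' observation; all the rest is the invocation of Kupka's theorem together with routine manipulations of closures and irreducible components, so I do not expect a real obstacle here. (If ``reduced Kupka scheme'' is instead to be read scheme-theoretically in the general, possibly non-reduced, case, one would additionally have to check — along the lines of the associated-primes computation in the proof of Theorem \ref{teo3} — that every minimal prime of $K$ is a minimal prime of $J$ whose generic point is a Kupka point, after which the local analysis above applies unchanged.)
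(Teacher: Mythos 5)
Your local argument via Kupka's normal form is correct as far as it goes, and it does dispose of the second assertion: when $J=\sqrt J$, Theorem \ref{teo3} gives $\KK=\KK_{set}\neq\emptyset$, so showing that $\KK_{set}$ is pure of codimension $2$ suffices there. But for the first assertion you have proved a statement about the wrong object. ``The reduced Kupka scheme'' is $\proj(S/\sqrt{K})$, and the paper's own example in Section \ref{teo} (the form $\omega=yz^2dx+x^2zdy-(x^2y+xyz)dz$) shows that $V(\sqrt K)$ can strictly contain $\KK_{set}$: there the support of $K$ is all three singular points while the Kupka set consists of only two of them. Your analysis at Kupka points says nothing about the components of $V(\sqrt K)$ lying outside $\KK_{set}$ --- precisely the ones that could a priori have codimension greater than $2$ --- and the parenthetical at the end of your proposal, where you defer this case, is where the actual content of the general statement lives. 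Note also that the associated-primes computation you point to only yields $\mathrm{Ass}(S/K)\subseteq\mathrm{Ass}(S/J)$, which gives no upper bound on codimension, since $J$ may have embedded primes of codimension greater than $2$.

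The paper closes this gap by a purely algebraic squeeze that your proposal lacks: from $\ann(H^2(\omega))\subseteq I\subseteq K\subseteq\bigl(\sqrt J:\IIc(d\omega)\bigr)$ one gets
\[
\KK_{set}\;\subseteq\; V(\sqrt K)\;\subseteq\; \mathrm{supp}\bigl(H^2(\omega)\bigr),
\]
and Theorem \ref{koszul-teo}, applied by localizing the Koszul complex on the open set where $\codim(\sing(\omega))\geq 3$, identifies $\mathrm{supp}(H^2(\omega))$ with the union of the codimension-$2$ components of $\sing(\omega)$; both outer terms are then of pure codimension $2$. It is this \emph{upper} bound on $V(\sqrt K)$ coming from the Koszul homology, not the local geometry at Kupka points, that controls the possible extra components. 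Your geometric argument is a perfectly good (and more elementary) substitute for the lower bound $\KK_{set}\subseteq V(\sqrt K)$ being pure of codimension $2$, but you must still supply the upper bound to prove the first assertion as stated.
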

\begin{proof}
Consider the following sequence of inclusions,
$$\sqrt{\ann(H^2(\omega))}\subseteq \sqrt{I}\subseteq \sqrt{K}\subseteq\big(\sqrt{J}:\IIc(d\omega)\big).$$
By Theorem \ref{koszul-teo}, the first ideal has pure codimension 2
(it follows by localizing $Kosz^\bullet(\omega)$
at the open subset $\codim(\sing(\omega))\geq 3$).
The last ideal is the ideal of the Kupka set which also has pure
codimension 2.

\end{proof}

\bigskip

In our investigation we have noted certain phenomena while looking for examples justifying the hypotheses of our statements. We would like to share with the reader a question we have not been able to settle.

\begin{question}
We do not know any example of an integrable form $\omega$ not in $\UU$. So the question arises:
is it true that $\FF^1(\PP^n,e)=\overline{\UU}$?
\end{question}

\section{Applications}\label{app}

Along this section we describe the unfolding ideals of pullback and split tangent sheaf foliations
in $\FF^1(\PP^n,e)$, see \cite{pullback} and \cite{fj} respectively.

\subsection{Pullback foliations}

In \cite{pullback} the authors prove the generic stability of pullback foliations. We recall from the introduction that a pullback foliation is given by $F^*\omega$, where
$\omega\in\FF^1(\PP^2,e)$  and $\xymatrix@1{F=(F_0:F_1:F_2):\PP^n\ar@{-->}[r]&\PP^2}$ is a rational map, where
$F_i$ is a homogeneous polynomial of degree $\nu$, $i=0,1,2$.

Generic conditions on $\omega$ means that its singular locus is reduced and given by Kupka singularities only; then, $sing(\omega)$ will consist of $N=(e-2)^2+e-1$ different points. Writing $\omega$ as
\[
\omega = A_0\,dx_0+A_1\,dx_0+A_2\, dx_0,
\]
we immediately get
\[
J(\omega)=K(\omega)=(A_0,A_1,A_2).
\]
Regarding the polynomials $F_i$, it is required that the critical values of $F$ be disjoint from the singularities
of $\omega$, as well as the set of critical points be disjoint from
$\{F_0=F_1=F_2=0\}$.

We will call the pair $(F,\omega)$ a \emph{generic pair}, if it satisfies the generic conditions just mentioned.

\medskip

%
%
%

\begin{theorem} Let $(F,\omega)$ be a generic pair. Following the notation above we have that
\[
I(F^*\omega) = K(F^*\omega) = (A_0(F),\, A_1(F),\, A_2(F)).
\]
\end{theorem}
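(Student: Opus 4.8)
The plan is to show both the inclusion $K(F^*\omega) \subseteq (A_0(F), A_1(F), A_2(F))$ coming from the structure of $F^*\omega$ and the reverse inclusion, and then to invoke Corollary \ref{technical1} (or directly Theorem \ref{teo1}) to collapse the chain $J(F^*\omega) \subseteq I(F^*\omega) \subseteq K(F^*\omega)$ into equality. First I would record the elementary identities governing pullback: if $\omega = A_0\,dx_0 + A_1\,dx_1 + A_2\,dx_2$ on $\PP^2$ and $F = (F_0:F_1:F_2)$, then $F^*\omega = \sum_i A_i(F)\, dF_i$, and consequently $d(F^*\omega) = \sum_i dA_i(F) \wedge dF_i = F^*(d\omega)$. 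The contraction $i_R(F^*\omega) = 0$ holds because $\omega$ descends to $\PP^2$ and $F$ is homogeneous (so $F$ maps the radial field to a multiple of the radial field on $\PP^2$, up to the critical locus). This gives the schematic description $\IIc(F^*\omega) = J(F^*\omega) \supseteq (A_0(F), A_1(F), A_2(F)) \cdot (\text{ideal of } 2\times 2 \text{ minors of } DF)$, which already shows $\sing(F^*\omega)$ is, away from the critical set of $F$, exactly $F^{-1}(\sing \omega)$.

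Next I would establish $K(F^*\omega) \subseteq (A_0(F), A_1(F), A_2(F))$. Since generic $\omega$ has $J(\omega) = K(\omega) = (A_0, A_1, A_2)$ with reduced zero-dimensional singular scheme consisting only of Kupka points, the key local fact is that at each Kupka point $q$ of $\omega$ in $\PP^2$ one has $d\omega_q \notin \mathfrak{m}_q \cdot \Omega^2$, i.e.\ $\IIc(d\omega)$ is comaximal with nothing vanishing there — equivalently $K(\omega) = (J(\omega) : \IIc(d\omega))$ is unaffected by the quotient. Pulling back, at a point $p$ with $F(p) = q \in \sing(\omega)$ and $DF(p)$ of maximal rank (guaranteed by the genericity of the pair, since critical points are disjoint from $F^{-1}(\sing\omega)$), the map $F$ is a submersion near $p$, so $F^*$ identifies the local ring of $\omega$ at $q$ with a regular extension and carries the Kupka normal form of $\omega$ at $q$ to a Kupka normal form of $F^*\omega$ at $p$. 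Hence $\sing(F^*\omega)$ is generically reduced along $F^{-1}(\sing\omega)$ and $d(F^*\omega) = F^*(d\omega)$ is nonzero there, so these components lie in $\KK(F^*\omega)$; the singular locus is otherwise captured by the minor ideal, which — again by genericity — meets $F^{-1}(\sing\omega)$ emptily and contributes to $\LL$, not $\KK$. From $K(F^*\omega) = (J(F^*\omega) : \IIc(d(F^*\omega)))$ and $J(F^*\omega) \supseteq (A_0(F), A_1(F), A_2(F))$ this forces $K(F^*\omega) \subseteq (A_0(F), A_1(F), A_2(F))$ after dividing out the minors.

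For the reverse inclusion $(A_0(F), A_1(F), A_2(F)) \subseteq I(F^*\omega)$, I would use that $h = A_j(F)$ arises as a pullback of $A_j \in I(\omega) = K(\omega)$: since $A_j\,d\omega = \omega \wedge \eta_j$ for some $\eta_j \in \Omega^1$ on $\PP^2$ (because $J(\omega) = I(\omega)$), applying $F^*$ gives $A_j(F)\, d(F^*\omega) = F^*\omega \wedge F^*\eta_j$, so $A_j(F) \in I(F^*\omega)$ directly from the definition of the unfoldings ideal. This shows $(A_0(F), A_1(F), A_2(F)) \subseteq I(F^*\omega) \subseteq K(F^*\omega) \subseteq (A_0(F), A_1(F), A_2(F))$, closing the loop and simultaneously giving $I(F^*\omega) = K(F^*\omega)$.

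The main obstacle I anticipate is the bookkeeping at the critical locus of $F$: one must check that the contribution of the Jacobian minor ideal to $\sing(F^*\omega)$ lands in $\LL(F^*\omega)$ and is disjoint from $\KK(F^*\omega)$, so that $\KK(F^*\omega) \cap \sing(d(F^*\omega))$ behaves well enough to apply Corollary \ref{technical1} (hypotheses $J$ radical and $\KK \cap \LL = \emptyset$). This is where the three genericity requirements on the pair $(F,\omega)$ — reduced Kupka singularities of $\omega$, critical values of $F$ disjoint from $\sing(\omega)$, and critical points of $F$ disjoint from $\{F_0 = F_1 = F_2 = 0\}$ — all get used, and verifying that together they yield reducedness of $J(F^*\omega)$ and the disjointness $\KK \cap \LL = \emptyset$ is the technical heart of the argument.
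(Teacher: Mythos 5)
Your overall skeleton --- closing the chain $(A_0(F),A_1(F),A_2(F))\subseteq I(F^*\omega)\subseteq K(F^*\omega)\subseteq (A_0(F),A_1(F),A_2(F))$ --- is the paper's, and your reverse inclusion is exactly the paper's argument: pull back the relation $A_i\,d\omega=\omega\wedge(\eta_i-dA_i)$ and use that $F^*$ commutes with $d$. The paper, however, does not prove the hard inclusion $K(F^*\omega)\subseteq(A_0(F),A_1(F),A_2(F))$ by a local analysis; it quotes \cite[p.~700]{pullback} for the fact that the Kupka component of $F^*\omega$ is reduced and equal to $F^{-1}(\KK_{set}(\omega))$, and then concludes $K(F^*\omega)=\II(\KK_{set}(F^*\omega))=(A_0(F),A_1(F),A_2(F))$ via Lemma \ref{K=Kset}. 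Once the sandwich closes, Corollary \ref{technical1} is not needed at all.

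The genuine gaps are in your replacement for that citation. First, your containment for the singular ideal is reversed: every coefficient of $F^*\omega=\sum_iA_i(F)\,dF_i$ is an $S$-linear combination of the $A_i(F)$, so $J(F^*\omega)\subseteq(A_0(F),A_1(F),A_2(F))$, not $\supseteq$; consequently the step ``from $K=(J:\IIc(d(F^*\omega)))$ and $J\supseteq(A_i(F))$ this forces $K\subseteq(A_i(F))$ after dividing out the minors'' is not a valid deduction. Colon ideals do not transform this way, and bounding $(J(F^*\omega):\IIc(d(F^*\omega)))$ from above requires actual control of the primary decomposition of $J(F^*\omega)$ --- which is precisely the content of the result the paper cites. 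Second, your local analysis covers only points of $F^{-1}(\sing(\omega))$ where $DF$ has maximal rank; the components of $\sing(F^*\omega)$ supported on the critical locus of $F$ (where the covector $(A_i(F))$ lies in the cokernel of $DF$) and on the indeterminacy locus $\{F_0=F_1=F_2=0\}$ are dismissed with the unproved assertion that they ``contribute to $\LL$, not $\KK$''; since $d(F^*\omega)=F^*(d\omega)$ has no a priori reason to vanish there, this needs an argument, as an unaccounted Kupka component would enlarge $K(F^*\omega)$ beyond $(A_i(F))$. Finally, even granting the set-theoretic identification of $\KK_{set}(F^*\omega)$ with $F^{-1}(\sing(\omega))$, the equality of ideals requires $(A_0(F),A_1(F),A_2(F))$ to be radical (i.e.\ the fibres over the singular points to be reduced), which you do not address. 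Either supply these verifications or, as the paper does, invoke \cite{pullback}.
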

\begin{proof}

By the genericity conditions and Lemma \ref{I=KP2}, we have $I(\omega)=K(\omega)=(A_0,A_1,A_2)$.

Following \cite[p.~700]{pullback}, the Kupka component of $F^*\omega$
is reduced and it is equal to the inverse image of the Kupka component of $\omega$.
Then,
\[
K(F^*\omega)=K_{set}(F^*\omega) = F^*\left(\II(\KK_{set})\right) = \left(A_0(F),A_1(F),A_2(F)\right),
\]
where the first equality follows from Lemma \ref{K=Kset}.

Now, from the inclusion $I\subseteq K$ of Proposition \ref{incl2}, we just need to show that every $A_i(F)\in I(F^*(\omega))$. Given that $A_i\in I(\omega)$, we have
\[
A_id\omega = \omega \wedge(\eta_i-dA_i).
\]
Then, by the commutativity of the exterior differential and the pullback operation,
\[
F^*\left(A_id\omega\right) = F^*\left(\omega\wedge (\eta_i-dA_i)\right)\iff
A_i(F)dF^*\omega = F^*\omega\wedge(F^*\eta_i-dA_i(F)).
\]
Thus, $A_i(F)\in I(F^*(\omega))$.

\end{proof}

\subsection{Foliations with split tangent sheaf}



As first observed in \cite{fj}, several examples of integrable forms on $\PP^n$ are of \emph{split} type,  \emph{e.g.}: such that there are fields $X_1,\ldots, X_{n-1}$ satisfying
\[
\omega =i_R i_{X_1}\cdots i_{X_{n-1}} dx_0\wedge\ldots\wedge dx_n,
\]
where $R$ is the radial field.

Examples treated in \cite{fj} include:
\begin{enumerate}
\item Linear pullbacks:  the pullback of generic degree $e-2$ foliation of $\PP^2$ under a generic linear projection.

\item Foliations associated to affine Lie algebras: These were first studied in \cite{omegar2}.
They are foliations in $\PP^3$ whose tangent sheaf is generated in an affine open set by two vector fields, $X$ and $Y$ satisfying
\[
X = px\frac{\partial}{\partial x}+qy\frac{\partial}{\partial y}+rz\frac{\partial}{\partial z},\qquad  [X,Y]= \ell Y
\]
for some integers $p,q,r,\ell$ with $\mathrm{gcd}(p,q,r)=1$.

\end{enumerate}

As explained in \cite[Section 9]{quallbrunn}, the singular scheme of such foliations is an equidimensional Cohen-Macaulay scheme of codimension 2.
Moreover, in \cite{fj}, in order to establish when these foliations form irreducible components of the space  $\FF^1(\PP^n)$, they require $\omega$ to be in $\UU'$, \emph{c.f.}  Remark \ref{rem-u}.

\medskip

\begin{proposition}
Let $\omega\in \UU'\subseteq \mathcal{F}^1(\PP^n)$ be a foliation of split type.
Then $J=K$.  In particular, for such foliations we have $I=J$.
\end{proposition}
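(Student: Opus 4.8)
The plan is to exploit the structural result cited from \cite{quallbrunn}: for a split-type foliation $\omega\in\UU'$, the singular scheme $\sing(\omega)$ is equidimensional, Cohen--Macaulay of codimension $2$. In particular the ideal $J=J(\omega)=\IIc(\omega)$ has no embedded primes, so every associated prime of $J$ is minimal of codimension $2$. I would first record that $\UU'\subseteq\UU$ by Remark \ref{rem-u}, so Theorem \ref{teo1} gives $\sqrt{I}=\sqrt{K}$, and moreover $J\subseteq I\subseteq K$ by Proposition \ref{incl2}.

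Next I would compare $J$ and $K$ prime-by-prime. Recall $K=(J:\IIc(d\omega))$ by eq. \ref{Kbis}. For any prime $\pp$ of codimension $2$ that is a minimal prime of $J$, localizing at $\pp$ I claim $J_\pp=K_\pp$: indeed $\pp$ is a minimal (hence isolated, codimension $2$) component of $\sing(\omega)$, and a generic such component of a split-type foliation is a Kupka component (equivalently, $d\omega$ does not vanish identically on it, so $\IIc(d\omega)_\pp$ is not contained in $\pp$, i.e. $\IIc(d\omega)$ contains a unit in $S_\pp$ after saturation, forcing $(J:\IIc(d\omega))_\pp=J_\pp$). The point is that $\LL$, the non-Kupka part, has codimension $\geq 3$ for $\omega\in\UU'$ — this is exactly the content of requiring $\codim(\sing(d\omega))\geq 3$ together with $\sing(d\omega)\subseteq\sing(\omega)$ — so no codimension-$2$ component of $\sing(\omega)$ lies in $\LL$, hence $\KK$ and $\sing(\omega)$ agree in codimension $2$ and $K_\pp=J_\pp$ at every minimal prime $\pp$ of $J$.

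Now I would assemble the equality $J=K$ from the Cohen--Macaulay hypothesis. Since $J$ is Cohen--Macaulay of pure codimension $2$, it has no embedded primes, so $J=\bigcap_{\pp} J_\pp\cap S$ over its (finitely many) minimal primes $\pp$, i.e. $J$ is the intersection of its $\pp$-primary components at minimal $\pp$. The ideal $K$ satisfies $J\subseteq K$ and $\sqrt{K}=\sqrt{I}=\sqrt{J}$ (the last by Theorem \ref{teo1} plus $J$ radical up to codimension — more carefully, $\sqrt{K}=(J:\IIc(d\omega))$ has the same minimal primes as $J$ because we just showed the localizations agree). So $K$ also has support exactly $\sing(\omega)$, which is pure codimension $2$; and $K\subseteq(J:\IIc(d\omega))$ localizes to $J_\pp$ at each minimal $\pp$. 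Therefore $K_\pp=J_\pp$ at every minimal prime of $J$, and since $J$ has no embedded components while $K\supseteq J$ forces $K$ to have no embedded components outside those of $J$ (any associated prime of $K$ contains $J$ hence contains a minimal prime of $J$, and by codimension-$2$ purity of $\sqrt{K}$ it must equal one), we get $K=\bigcap_\pp K_\pp\cap S=\bigcap_\pp J_\pp\cap S=J$.

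Finally, the "in particular" statement: $J\subseteq I\subseteq K=J$ by Proposition \ref{incl2}, so $I=J$. The main obstacle I anticipate is the justification that every codimension-$2$ component of $\sing(\omega)$ is genuinely a Kupka component for a \emph{generic} split-type $\omega\in\UU'$ — i.e. that $\LL$ has codimension $\geq 3$; I would lean on the fact that $\omega\in\UU'$ demands $\codim\sing(d\omega)\geq 3$ and that $\LL\subseteq\sing(d\omega)\cup(\text{non-reduced locus})$, but one should check the scheme-theoretic bookkeeping carefully, and it is here that the Cohen--Macaulay/equidimensionality input from \cite{quallbrunn} does the real work by ruling out embedded primes that could otherwise distinguish $I$, $J$, and $K$.
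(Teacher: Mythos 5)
Your argument for $J=K$ is essentially the paper's: both use the Cohen--Macaulay, equidimensional codimension-$2$ structure of $\sing(\omega)$ to rule out embedded primes of $J$, and the hypothesis $\codim(\sing(d\omega))\geq 3$ coming from $\UU'$ to see that $\IIc(d\omega)\not\subseteq\pp$ for every associated prime $\pp$ of $J$, whence $K_\pp=(J:\IIc(d\omega))_\pp=J_\pp$ at every such $\pp$ and therefore $K=J$. One small remark on your assembly step: the parenthetical claim that every associated prime of $K$ must equal a minimal prime of $J$ is not justified as written (purity of $\sqrt{K}$ controls only the minimal primes of $K$, not potential embedded ones), but it is also unnecessary, since the inclusions $K\subseteq\bigcap_\pp\left(K_\pp\cap S\right)=\bigcap_\pp\left(J_\pp\cap S\right)=J$ hold for free (the last equality because $J$ has no embedded primes) and already close the argument together with $J\subseteq K$. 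For the ``in particular'' you diverge slightly from the paper, and to your advantage: you read $I=J$ directly off the sandwich $J\subseteq I\subseteq K$ of Proposition \ref{incl2}, whereas the paper first deduces $L=S$ and then invokes Corollary \ref{prop1}; your route is shorter and sidesteps having to verify the comaximality hypothesis of that corollary.
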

\begin{proof} As $\omega$ is a foliation of split type its singular scheme is an equidimensional Cohen-Macaulay scheme of codimension 2.
Then $d\omega$ does not vanish along any component of $\sing(\omega)$.
In particular, for any associated prime $\mathfrak{p}\in \mathrm{ass}(S/J)$, we have $d\omega_\mathfrak{p}\notin \mathfrak{p}\cdot \Omega^2_{S_{\mathfrak{p}}}$, so $\ann(\overline{d\omega})=(0)\subseteq \Omega^2_S\otimes S/J$.

As any foliation in $\PP^n$ verifies $\sing (d\omega)\subseteq \sing (\omega)$, we must have $L = S$. By Corollary \ref{prop1} this implies $I=J$.

\end{proof}

Recall from the introduction that we say that two unfoldings $\widetilde{\omega}$ and $\widehat{\omega}$ are \emph{isomorphic} if there is an isomorphism $\phi$ of $\PP^n[\varepsilon]$ such that $\phi$ restricts to the identity in the central fiber and $\phi^*\widetilde{\omega}= \widehat{\omega}$.

\begin{corollary}Let $\omega\in\UU'\subseteq\FF^1(\PP^n,e)$ be a foliation of split type. Then, every unfolding of $\omega$ is isomorphic to the trivial unfolding.
\end{corollary}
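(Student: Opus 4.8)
The plan is to combine the previous proposition with the algebraic description of isomorphism classes of unfoldings provided by Proposition \ref{I/J}. The proposition immediately preceding this corollary establishes that for $\omega\in\UU'$ of split type we have $I(\omega)=J(\omega)$. The strategy is to translate this equality of ideals into the statement that the module $\overline{\UUb}(\omega)$ of isomorphism classes of graded projective unfoldings vanishes, and then to pass from the graded projective setting back to the honest unfoldings in $\PP^n[\varepsilon]$ to conclude triviality.

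First I would invoke Proposition \ref{I/J}, which gives the isomorphism $\overline{\UUb}(\omega)\cong I(\omega)/J(\omega)$. Since $I(\omega)=J(\omega)$ by the preceding proposition, we get $\overline{\UUb}(\omega)=0$; equivalently, $\UUb(\omega)=C_{\UUb}(\omega)$. By definition of $C_{\UUb}(\omega)$, this means that every graded projective unfolding $(h,\eta)$ of $\omega$ is, modulo $S\cdot(0,\omega)$, of the form $\left(i_X\omega,\tfrac{1}{e}(a\,i_Xd\omega+e\,di_X\omega)\right)$ for some vector field $X$. Such pairs are precisely the unfoldings induced by an infinitesimal automorphism of the ambient space (the flow of $\varepsilon X$), hence they are isomorphic, as unfoldings, to the trivial one: the automorphism $\phi$ of $\PP^n[\varepsilon]$ given by $x_i\mapsto x_i+\varepsilon X(x_i)$ restricts to the identity on the central fiber and pulls back $\widetilde{\omega}_\varepsilon$ to $\omega$ (viewed as a constant family). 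This is exactly the content of the isomorphism-class description recalled from \cite{suwa-unfoldings} and \cite{moli}.

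Finally I would address the passage between the graded projective module $\UUb(\omega)$ and the set $U(\omega)$ of first order unfoldings on $\PP^n[\varepsilon]$. The module $\UUb(\omega)$ was introduced precisely as the graded surrogate for $U(\omega)$, and the isomorphism classes of unfoldings of $\omega$ are computed by $\overline{\UUb}(\omega)$ (as stated in the paragraph following the definition of $I(\omega)$ in the introduction, referring to Proposition \ref{I/J}). So $\overline{\UUb}(\omega)=0$ directly yields that $U(\omega)$ has a single isomorphism class, namely that of the trivial unfolding. The main obstacle, and the only point requiring care, is making the last translation precise: one must check that the vanishing of $\overline{\UUb}(\omega)$ really does force \emph{every} unfolding in the sense of the introduction (an integrable $\widetilde{\omega}_\varepsilon$ on $\PP^n[\varepsilon]$ reducing to $\omega$) to be isomorphic to the trivial one, rather than merely those arising in the graded picture. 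This is where one uses that $C_{\UUb}(\omega)$ consists exactly of the unfoldings coming from vector fields, so that the quotient being zero says no nontrivial unfolding class survives; the construction of $\phi$ from the corresponding $X$ then completes the argument.
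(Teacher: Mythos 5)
Your proposal is correct and follows essentially the same route as the paper: the paper's proof simply cites the isomorphism $\overline{\UUb}(\omega)\cong I(\omega)/J(\omega)$ (via Theorem \ref{unf-h1}, equivalent to your use of Proposition \ref{I/J}) and concludes from $I=J$ that the quotient vanishes. Your additional care about translating from graded projective unfoldings back to unfoldings on $\PP^n[\varepsilon]$ is a reasonable elaboration of a step the paper leaves implicit, but it does not change the argument.
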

\begin{proof}
By Theorem \ref{unf-h1} the isomorphism classes of graded unfoldings are parameterized by the quotient $I/J$, which is trivial by the previous proposition.

\end{proof}

\

In this way we can describe $I(\omega)$ for the above examples:

\begin{enumerate}
\item Linear pullbacks: these were treated with more generality in the previous section. In the case of linear pullbacks, the singular locus consists only of Kupka points, which does not need to happen in the general case.
\item Foliations associated to affine Lie algebras: in \cite{omegar2} is shown how the singular set of these foliations is related with the Lie-Klein curves.
These are rational curves $\Gamma_{p,q,r}$ parametrized by $(t:s)\mapsto (t^p:t^qs^{p-q}:t^rs^{p-r}:s^p)$ for integers $p,q,r$ with $\mathrm{gcd}(p,q,r)=1$.

Specifically, in \cite[p.~999]{omegar2} it is shown that the singular scheme 
of foliations given by vector fields $X$ and $Y$ as above with
\[
(p,q,r,\ell)=(\nu^2+\nu+1,\nu+1,1,-1),\qquad \nu \in\mathbb{Z},
\]
is given by the union of the 
Lie-Klein curve $\Gamma_{p,q,r}$, a line and a plane curve of degree $\nu+1$.

\end{enumerate}

\newcommand{\etalchar}[1]{$^{#1}$}

\

\begin{tabular}{l l}
C\'esar Massri$^*$ \hspace{3cm}\null&\textsf{cmassri@dm.uba.ar}\\
Ariel Molinuevo$^*$  &\textsf{amoli@dm.uba.ar}\\
Federico Quallbrunn$^*$  &\textsf{fquallb@dm.uba.ar}\\
&\\
$^*$\textsc{Departamento de Matem\'atica} & \\
\textsc{Pabell\'on I} &\\
\textsc{Ciudad Universitaria} &\\
\textsc{CP C1428EGA} &\\
\textsc{Buenos Aires} &\\
\textsc{Argentina} &
\end{tabular}

\end{document}